\numberwithin{equation}{section}
\theoremstyle{plain}
\newtheorem{theorem}[equation]{Theorem}
\newtheorem{corollary}[equation]{Corollary}
\newtheorem{lemma}[equation]{Lemma}
\newtheorem{proposition}[equation]{Proposition}
\theoremstyle{definition}
\newtheorem{definition}[equation]{Definition}
\newtheorem{example}[equation]{Example}
\newtheorem{remark}[equation]{Remark}
\numberwithin{equation}{section}
\newcommand{\bx}{{\partial \Omega}}
\newcommand{\rarrow}{{\rightarrow}}
\newcommand{\Z}{{\mathbb Z}}
\newcommand{\R}{{\mathbb R}}
\newcommand{\N}{{\mathbb N}}
\newcommand{\dyadic}{{\mathcal {Q}}}
\newcommand{\Om}{\Omega}
\providecommand{\vint}[1]{\mathchoice
          {\mathop{\vrule width 5pt height 3 pt depth -2.5pt
                  \kern -9pt \kern 1pt\intop}\nolimits_{\kern -5pt{#1}}}
          {\mathop{\vrule width 5pt height 3 pt depth -2.6pt
                  \kern -6pt \intop}\nolimits_{\kern -3pt{#1}}}
          {\mathop{\vrule width 5pt height 3 pt depth -2.6pt
                  \kern -6pt \intop}\nolimits_{\kern -3pt{#1}}}
          {\mathop{\vrule width 5pt height 3 pt depth -2.6pt
                  \kern -6pt \intop}\nolimits_{\kern -3pt{#1}}}}
\newcommand{\eps}{\varepsilon}
\newcommand{\loc}{\mathrm{loc}}
\newcommand{\BV}{\mathrm{BV}}
\newcommand{\SBV}{\mathrm{SBV}}
\newcommand{\liploc}{\mathrm{Lip}_{\mathrm{loc}}}
\newcommand{\ch}{\text{\raise 1.3pt \hbox{$\chi$}\kern-0.2pt}}
\DeclareMathOperator{\dist}{dist}
\DeclareMathOperator{\diam}{diam}
\DeclareMathOperator{\Lip}{Lip}
\DeclareMathOperator{\supp}{spt}
\begin{document}
\title{Traces of Newton-Sobolev, Haj\l asz-Sobolev,\\
		and BV functions on metric spaces
\footnote{{\bf 2010 Mathematics Subject Classification}:
46E35, 26B30, 30L99
\hfill \break {\it Keywords}: boundary trace, function of bounded variation, Newton-Sobolev function, Hajlasz-Sobolev function, metric measure space
}
}
\author{Panu Lahti, Xining Li, and Zhuang Wang}
\date{}

\maketitle

\begin{abstract}
We study the boundary traces of Newton-Sobolev, Haj\l asz-Sobolev,
and BV (bounded variation) functions. Assuming less regularity of the domain
than is usually done in the literature,
we show that all of these function classes achieve the same
``boundary values'', which
in particular implies that the trace spaces coincide provided that they exist.
Many of our results seem to be new even in Euclidean spaces but
we work in a more general complete metric space equipped with a doubling measure
and supporting a Poincar\'e inequality.
\end{abstract}

\section{Introduction}

Boundary traces for various function classes, especially functions of
bounded variation (BV functions), have been studied in recent years in the setting
of metric measure spaces $(X,d,\mu)$.
In \cite{LS18}, the authors studied the boundary traces, or traces for short,
of BV functions in suitably
regular domains. Typically, the boundary trace $Tu$ of a function $u$ in a domain
$\Om$ is defined by the condition
\begin{equation}\label{Intro-trace}
\lim_{r\to 0^+}\vint{B(x,r)\cap \Om}|u-Tu(x)|\,d\mu=0
\end{equation}
for a.e. $x\in\partial \Om$ with respect to the
codimension 1 Hausdorff measure $\mathcal{H}$. 
In \cite{MShS} (see also references therein for previous works in Euclidean
spaces) the authors considered the corresponding extension problem,
that is, the problem of finding a function whose trace is a prescribed
{$L^1$-function}
on the boundary.
They showed that in sufficiently regular domains,
the trace operator of BV functions is surjective,
and that in fact the extension can always be taken to be a Newton-Sobolev
function. This implies that
the trace space of both $\BV(\Om)$ and $N^{1,1}(\Om)$ is $L^1(\bx)$.
{This trace and extension problem is motivated by
	Dirichlet problems for functions of
	least gradient, in which one minimizes the total variation among BV functions
	with prescribed boundary data, see \cite{AG78,Giu84,KLLS,M67,SWZ}.}

In the current paper, we consider boundary traces from a different viewpoint.
Unlike in the existing literature, we assume very little regularity of the domain,
meaning that traces need not always exist.
We are nonetheless able to show in various cases that
for a given function, it is possible to
find a more regular function that ``achieves the same boundary values''.
In particular, if the original function has a boundary trace, then the more regular
function has the same trace. This sheds further light on the extension problem.
To prove our results, we apply some existing approximation results for BV and Newton-Sobolev functions, and develop some new ones.

%{\red (Give a ... for why this is interesting. From the point of Dirichlet problem associated potential thoery?)... I am not sure what is Nages's idea here? I will attach a picture about Nages's comments.}

We will always assume that $(X,d,\mu)$ is a complete metric space equipped with
a doubling measure $\mu$ and supporting a $(1,1)$-Poincar\'e inquality.
Let $\Om\subset X$ be a nonempty open set.
For BV functions we prove the following three theorems. The exponent $s$
is sometimes called the homogeneous dimension of the space.
 $N^{1,1}(\Om)$ is a generalization of the Sobolev class $W^{1,1}(\Om)$
to metric spaces;
see Section \ref{sec:preliminaries} for definitions.

\begin{theorem}\label{thm:BV theorem 1}
Let $u\in\BV(\Om)$.
Then there exists $v\in N^{1,1}(\Om)\cap \liploc(\Om)$ such that
\[
\vint{B(x,r)\cap \Om}|v-u|^{s/(s-1)}\,d\mu\to 0\quad \textrm{as }r\to 0^+
\]
uniformly for all $x\in\partial\Om$.
\end{theorem}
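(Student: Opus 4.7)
The plan is to construct $v$ via a discrete convolution of $u$ adapted to a Whitney-type decomposition of $\Om$, and to derive the convergence from the Sobolev--Poincar\'e inequality for BV functions combined with the finiteness of $\|Du\|$ as a Radon measure.

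First I choose a Whitney-type cover $\{B_i = B(x_i,r_i)\}$ of $\Om$ with bounded overlap and $r_i$ comparable to $\dist(x_i, X\setminus\Om)$, together with a Lipschitz partition of unity $\{\pip_i\}$ subordinate to $\{2B_i\}$ with $\Lip\pip_i\lesssim r_i^{-1}$. Set $v:=\sum_i\pip_i u_{B_i}$, where $u_{B_i}=\vint{B_i}u\,d\mu$. Local finiteness of the cover makes $v\in\liploc(\Om)$ automatic. To place $v$ in $N^{1,1}(\Om)$ I construct an upper gradient $g_v$ via the standard estimate $g_v(y)\lesssim\sum_{j\,:\,2B_j\ni y}|u_{B_j}-u_{B_i}|/r_i$ on each $B_i$, and combine the $(1,1)$-Poincar\'e inequality $|u_{B_j}-u_{B_i}|\lesssim r_i\,\|Du\|(\lambda B_i)/\mu(B_i)$ with the bounded overlap to get $\int_\Om g_v\,d\mu\lesssim\|Du\|(\Om)<\infty$.

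Next, from the $(1,1)$-Poincar\'e inequality and doubling with exponent $s$ I invoke the $(s/(s-1),1)$-Sobolev--Poincar\'e inequality for BV:
\[
\left(\vint{B}|u-u_B|^{s/(s-1)}\,d\mu\right)^{(s-1)/s}\leq C\,r\,\frac{\|Du\|(\lambda B)}{\mu(\lambda B)}.
\]
Together with $\mu(B)\gtrsim r^s$, this yields the key local estimate $\int_{B_i}|u-u_{B_i}|^{s/(s-1)}\,d\mu\lesssim\|Du\|(\lambda B_i)^{s/(s-1)}$. Writing $v-u=\sum_j\pip_j(u_{B_j}-u)$ and using that only finitely many $\pip_j$ are nonzero on each $B_i$, a telescoping-chain argument on overlapping Whitney balls gives $\int_{B_i}|v-u|^{s/(s-1)}\,d\mu\lesssim\sum_{j\,:\,2B_j\cap B_i\neq\emptyset}\|Du\|(\lambda B_j)^{s/(s-1)}$.

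For $x\in\partial\Om$ and small $r$, every Whitney ball meeting $B(x,r)$ has $r_i\lesssim r$. Summing the above over these balls, and exploiting bounded overlap and the superadditivity $\sum_j a_j^p\leq(\sum_j a_j)^p$ for $p=s/(s-1)>1$, I obtain $\int_{B(x,r)\cap\Om}|v-u|^{s/(s-1)}\,d\mu\lesssim\|Du\|(B(x,Cr)\cap\Om)^{s/(s-1)}$. Since $\|Du\|$ is a finite Radon measure on $\Om$, for every $\eps>0$ there is a compact $K\subset\Om$ with $\|Du\|(\Om\setminus K)<\eps$, and hence $\|Du\|(B(x,Cr)\cap\Om)<\eps$ uniformly in $x\in\partial\Om$ once $r$ is small. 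The main difficulty is passing from this integral bound to the averaged statement of the theorem: the denominator $\mu(B(x,r)\cap\Om)$ can decay too fast compared to $\|Du\|(B(x,Cr)\cap\Om)^{s/(s-1)}$ for a naive Whitney construction (already visible for $u=\chi_{\{y_n<0\}}$ on a Euclidean ball and $x$ on the equator). I therefore expect the Whitney scale to need refinement --- e.g.\ taking $r_i\approx\dist(x_i,X\setminus\Om)^\alpha$ with $\alpha>1$, or performing an additional mollification at an even smaller scale inside each Whitney ball --- so as to gain an extra factor that beats the decay of $\mu(B(x,r)\cap\Om)$ and delivers the stated uniform convergence.
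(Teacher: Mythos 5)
Your proposal is honest in that it explicitly flags the gap it cannot close, and that gap is indeed fatal to the approach as stated. A discrete convolution $v=\sum_i\pip_i u_{B_i}$ over a fixed Whitney cover yields, via Sobolev--Poincar\'e and superadditivity, only
\[
\int_{B(x,r)\cap\Om}|v-u|^{s/(s-1)}\,d\mu\lesssim\|Du\|(B(x,Cr)\cap\Om)^{s/(s-1)},
\]
and as you observe, this is of the same order as $\mu(B(x,r)\cap\Om)$ for a simple characteristic function, so the averaged quantity does not tend to zero. The fundamental obstruction is that once the Whitney cover is fixed, the $L^{s/(s-1)}$-distance between $u_W$ and $u$ near the boundary is \emph{not adjustable}: it is pinned to the size of $\|Du\|$ there, which need not decay fast enough. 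Your proposed remedies (sub-linear Whitney scales $r_i\approx\dist^\alpha$, or a further mollification inside each ball) are speculative; verifying them would require a finer accounting of the Whitney structure than the crude $\ell^1\!\to\!\ell^{s/(s-1)}$ superadditivity bound you quote, and it is not at all clear the resulting function would still have $\int_\Om g_v\,d\mu\lesssim\|Du\|(\Om)$ or that the argument survives unbounded $\Om$ and general metric spaces.

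The paper takes a genuinely different route that removes this obstruction at the source. Rather than one global discrete convolution, it exhausts $\Om$ by nested open sets $\Om_j=\{\dist(\cdot,X\setminus\Om)>d_j\}\cap B(x_0,d_j^{-1})$ (the intersection with the growing ball handles unbounded $\Om$), picks a subordinate Lipschitz partition $\eta_j$, and sets $v=\sum_j(\eta_j-\eta_{j-1})u_{i(j)}$, where the $u_{i(j)}$ come from an approximating sequence in the definition of the variation measure, truncated so as to converge in $L^{s/(s-1)}_{\loc}$ (Lemma \ref{lem:approximating liploc functions in L with Sobolev exponent}). The decisive point is that the index $i(j)$ can be chosen to make $\|u_{i(j)}-u\|_{L^{s/(s-1)}(\Om_j)}$ \emph{as small as one likes}. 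The paper chooses it smaller than a quantity built from
\[
\beta_j:=\inf\bigl\{\mu(B(x,2^{-j})\cap\Om):\,x\in\partial\Om\cap\overline B(x_0,d_{j+2}^{-1})\bigr\},
\]
which is strictly positive by lower semicontinuity of $x\mapsto\mu(B(x,r)\cap\Om)$ together with compactness of $\partial\Om\cap\overline B(x_0,d_{j+2}^{-1})$. This compactness-derived lower bound on the denominator is exactly what a fixed discrete convolution cannot exploit, and it is what makes the average tend to zero \emph{uniformly}. A careful choice of the $\delta_j$ in Lemma \ref{lem:choosing approximating function} (which also controls the total upper-gradient mass via a Leibniz-rule telescoping of the $\eta_j$) then gives $v\in N^{1,1}(\Om)\cap\liploc(\Om)$ with the desired boundary behaviour. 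In short: your approach commits to a single construction and then tries to estimate the error; the paper instead builds in a free parameter per annulus and tunes it against a geometric lower bound for $\mu(B(x,r)\cap\Om)$. That extra degree of freedom is the missing idea.
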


In particular, \emph{whenever} there exists a BV extension of a given
function defined on the boundary, it is possible
to also find a Newton-Sobolev extension.
If we give up the requirement that $v$ is locally Lipschitz, we can
replace $s/(s-1)$ by an arbitrarily large exponent.

\begin{theorem}\label{thm:BV theorem 2}
Let $u\in\BV(\Om)$ and let $1\le q<\infty$.
Then there exists $v\in N^{1,1}(\Om)$ such that
\[
\vint{B(x,r)\cap \Om}|v-u|^{q}\,d\mu\to 0\quad \textrm{as }r\to 0^+
\]
uniformly for all $x\in\partial\Om$.
\end{theorem}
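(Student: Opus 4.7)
The starting point is Theorem~\ref{thm:BV theorem 1}, which already furnishes $v_0\in N^{1,1}(\Om)\cap\liploc(\Om)$ satisfying
\[
\vint{B(x,r)\cap\Om}|v_0-u|^{s/(s-1)}\,d\mu\to 0\qquad\text{as }r\to 0^+,
\]
uniformly for $x\in\bx$. For exponents $q\le s/(s-1)$, Jensen's inequality applied to the normalized measure on $B(x,r)\cap\Om$ yields
\[
\vint{B(x,r)\cap\Om}|v_0-u|^{q}\,d\mu\le\left(\vint{B(x,r)\cap\Om}|v_0-u|^{s/(s-1)}\,d\mu\right)^{q(s-1)/s},
\]
and so $v:=v_0$ already witnesses the claim. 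The substantive case is therefore $q>s/(s-1)$.

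The guiding idea for $q>s/(s-1)$ is that a uniform pointwise bound on $v-u$ allows $L^p$-interpolation between $L^{s/(s-1)}$ and $L^\infty$ to reach any exponent. Since $u$ itself need not be bounded, the plan is to apply Theorem~\ref{thm:BV theorem 1} not to $u$ but to the truncation $u_k:=\max(-k,\min(k,u))\in\BV(\Om)\cap L^\infty(\Om)$, obtaining $w_k\in N^{1,1}(\Om)\cap\liploc(\Om)$ with uniform $L^{s/(s-1)}$ boundary convergence to $u_k$. Truncating $w_k$ at the same level gives $\tilde w_k:=\max(-k,\min(k,w_k))\in N^{1,1}(\Om)\cap L^\infty(\Om)$. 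Since truncation at level $k$ is a $1$-Lipschitz projection onto $[-k,k]$ and fixes $u_k$, we have $|\tilde w_k-u_k|\le|w_k-u_k|$ pointwise, so $\tilde w_k$ inherits the uniform $L^{s/(s-1)}$ decay. Combining this with the pointwise bound $|\tilde w_k-u_k|\le 2k$ and the interpolation estimate $|f|^q\le(2k)^{q-s/(s-1)}|f|^{s/(s-1)}$ yields
\[
\sup_{x\in\bx}\vint{B(x,r)\cap\Om}|\tilde w_k-u_k|^q\,d\mu\to 0\qquad\text{as }r\to 0^+.
\]

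The principal obstacle is combining these bounded approximants of the $u_k$'s into a single $v\in N^{1,1}(\Om)$ tracking $u$ itself. My proposal is a telescoping construction: pick a rapidly increasing sequence $k_1<k_2<\cdots\to\infty$ and set
\[
v:=\tilde w_{k_1}+\sum_{j\ge 1}(\tilde w_{k_{j+1}}-\tilde w_{k_j}),
\]
choosing the $k_j$ (using the flexibility in Theorem~\ref{thm:BV theorem 1} and the new approximation results developed later in the paper) so that the $N^{1,1}(\Om)$-norms of the successive increments are summable; this guarantees $v\in N^{1,1}(\Om)$. The uniform $L^q$ bound on $|v-u|^q$ would then follow from the decomposition
\[
|v-u|^q\le C_q\bigl(|v-\tilde w_{k_j}|^q+|\tilde w_{k_j}-u_{k_j}|^q+(|u|-k_j)_+^q\bigr)
\]
by taking $j=j(r)$ large, with the middle term controlled by the interpolation step above and the first term by the tail of the series. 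I expect the core technical difficulty to lie in controlling the tail term $(|u|-k_j)_+^q$ uniformly over boundary balls when $u$ is not even in $L^q_{\mathrm{loc}}$; handling this forces the successive approximants to track the high-value part of $u$ closely within a summable $N^{1,1}$ framework, and this is likely where the refined approximation tools of the paper must be brought to bear.
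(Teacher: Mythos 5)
Your proposal diverges from the paper's route and contains a gap that you yourself correctly flag as the ``core technical difficulty'' but do not resolve — and it is a genuine obstruction, not a technicality. In the final decomposition
\[
|v-u|^q\le C_q\bigl(|v-\tilde w_{k_j}|^q+|\tilde w_{k_j}-u_{k_j}|^q+(|u|-k_j)_+^q\bigr),
\]
the third term is simply infinite on every boundary ball whenever $u\notin L^q_{\loc}(\Om)$ (which is the case you are trying to handle once $q>s/(s-1)$); no choice of $j(r)$ can rescue it, since the average $\vint{B(x,r)\cap\Om}(|u|-k_j)_+^q\,d\mu=\infty$ for every $r>0$ and every $j$. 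A second, independent issue is the telescoping sum itself: each $\tilde w_{k_j}$ is produced by an \emph{independent} application of Theorem~\ref{thm:BV theorem 1} to $u_{k_j}$, so $\int_\Om g_{\tilde w_{k_j}}\,d\mu\approx\Vert Du\Vert(\Om)$ for every $j$, and the increments $\tilde w_{k_{j+1}}-\tilde w_{k_j}$ have no reason to be small in $N^{1,1}(\Om)$; making them summable is not a matter of ``flexibility'' in Theorem~\ref{thm:BV theorem 1} but requires approximating the \emph{increments} $u_{k_{j+1}}-u_{k_j}$ directly, which Theorem~\ref{thm:BV theorem 1} as stated does not do.

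The paper sidesteps both problems at once via a coarea/layer-cake decomposition (Theorem~\ref{thm:BV Lp approximation result}). It writes $u_+=\sum_{k\ge0}\min\{1,(u-k)_+\}$, approximates each unit-height slice by a Newton--Sobolev $v_k$ with $0\le v_k\le1$, and sums. Because each $v_k-u_k$ is bounded by $1$, the difference $v-u_+=\sum(v_k-u_k)$ lands in $L^q(\Om)$ even when $u$ and $v$ themselves are not in $L^q_{\loc}$ — the slices of $v$ and of $u$ cancel layer by layer, so there is no uncontrolled tail term. The coarea formula $\Vert Du_+\Vert(\Om)=\sum_k\int_k^{k+1}P(\{u>t\},\Om)\,dt$ simultaneously controls $\sum_k\int_\Om g_{v_k}\,d\mu$, which is exactly the summability you were missing in the telescoping step. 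With this $L^1\cap L^q$ approximation in hand, the paper then reruns the gluing machinery of Lemma~\ref{lem:choosing approximating function} exactly as in the proof of Theorem~\ref{thm:BV theorem 1 with more detail}, with $L^q$ replacing $L^{s/(s-1)}$, to obtain the uniform boundary decay. Your intuition that bounded approximants and interpolation should enter is sound, but the correct object to truncate-and-approximate is the unit slice of $u$, not $u$ itself; otherwise the error term $(|u|-k)_+$ is unmanageable.
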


If we also allow $v$ to have a small (approximate) \emph{jump set} $S_v$, then
we can include the case $q=\infty$. The class of
\emph{special functions of bounded variation},
denoted by $\SBV(\Om)$, is defined as those $\BV$ functions whose variation measure
only has an absolutely continuous part (like Sobolev functions) and a jump part.
The class was introduced by De Giorgi and Ambrosio
\cite{ADG} as a natural class in which to solve
various variational problems, e.g. the Mumford--Shah functional.

\begin{theorem}\label{thm:BV theorem 3}
Let $u\in\BV(\Om)$ and let $\eps>0$. Denote $\Om(r):=\{x\in \Om:\,\dist(x,X\setminus \Om)>r\}$ for $r>0$. Then there exists $v\in \SBV(\Om)$
such that $\mathcal H(S_v)<\eps$ and
\[
\Vert v-u\Vert_{L^{\infty}(\Om\setminus \Om(r))}\to 0\quad \textrm{as }r\to 0^+.
\]
\end{theorem}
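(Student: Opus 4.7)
The strategy is to build $v$ as a Lipschitz partition-of-unity combination $v=\sum_{k}\psi_{k}v_{k}$ of piecewise-constant local approximations $v_{k}$ of $u$ on thick annular shells $A_{k}$ approaching $\bx$, with discretization step $\eta_{k}\to 0$ on shells close to the boundary. The shell approximations provide the uniform $L^{\infty}$ decay, while a Lipschitz partition of unity (in place of sharp interfaces) prevents the transitions between shells from contributing an uncontrollable $\sum_{k}\mathcal{H}(\{d=r_{k}\})$ to $S_{v}$, which would otherwise blow up since $\mathcal{H}(\{d=r\})$ need not decay as $r\to 0^{+}$.

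Writing $d(x):=\dist(x,X\setminus\Omega)$, I would choose sequences $r_{0}>r_{1}>\cdots\to 0^{+}$ and $\eta_{k}\downarrow 0$ inductively, set $A_{k}:=\{r_{k+1}<d<r_{k-1}\}$, and take $\{\psi_{k}\}$ to be a Lipschitz partition of unity subordinate to $\{A_{k}\}$ with $\Lip\psi_{k}\lesssim(r_{k-1}-r_{k+1})^{-1}$. Using $|Du|(\{d<r\})\to 0$ as $r\to 0^{+}$, these parameters can be chosen so that for all $k$,
\begin{equation*}
\frac{|Du|(A_{k})}{\eta_{k}}<\frac{\eps}{2^{k+2}}
\qquad\text{and}\qquad
\eta_{k}\cdot\Lip(\psi_{k})\cdot\mu(A_{k})<\frac{\eps}{2^{k+2}}.
\end{equation*}
For each $k$, a Fubini averaging of the coarea formula over the shift $\tau\in[0,\eta_{k})$ produces some $\tau_{k}$ for which every level set $\{u>\tau_{k}+j\eta_{k}\}$, $j\in\Z$, has finite perimeter in $A_{k}$ and $\eta_{k}\sum_{j}P(\{u>\tau_{k}+j\eta_{k}\},A_{k})\leq|Du|(A_{k})$. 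I then set $v_{k}(x):=\tau_{k}+\eta_{k}\lfloor(u(x)-\tau_{k})/\eta_{k}\rfloor$ and
\begin{equation*}
v\;:=\;\sum_{k}\psi_{k}v_{k}\;=\;u-\sum_{k}\psi_{k}(u-v_{k}),
\end{equation*}
the right-hand form making the smallness of $v-u$ manifest through $|u-v_{k}|\leq\eta_{k}$ on $\supp\psi_{k}$.

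The verification splits into three pieces. For the uniform $L^{\infty}$ bound: if $r\leq r_{K}$ then $\Omega\setminus\Omega(r)\subset\{d\leq r_{K}\}\subset\bigcup_{j\geq K-1}A_{j}$, on which only finitely many $\psi_{j}$ (those with $j\geq K-1$) are nonzero, giving $|v-u|\leq\sum_{j}\psi_{j}\eta_{j}\leq\eta_{K-1}\to 0$ as $K\to\infty$. For the $\SBV$ structure, the identity $\sum_{k}D\psi_{k}=0$ yields
\begin{equation*}
Dv\;=\;\sum_{k}\psi_{k}\,Dv_{k}\;-\;\sum_{k}(u-v_{k})\,D\psi_{k},
\end{equation*}
in which the first sum is a jump measure (each $v_{k}$ is piecewise constant on sets of finite perimeter, so $Dv_{k}$ is pure jump, and multiplication by Lipschitz $\psi_{k}$ preserves this) and the second is absolutely continuous (since $D\psi_{k}\ll\mu$). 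Hence $D^{c}v=0$, $\mathcal{H}(S_{v})\leq\sum_{k}|Du|(A_{k})/\eta_{k}<\eps$, and finite total variation $|Dv|(\Omega)<\infty$ follows from the second parameter inequality together with $\sum_{k}|Du|(A_{k})\leq 2|Du|(\Omega)$.

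The main obstacle I foresee is the simultaneous feasibility of the two parameter constraints: the jump-set bound forces $\eta_{k}\gtrsim 2^{k}|Du|(A_{k})/\eps$, while the absolutely continuous bound forces $\eta_{k}\lesssim\eps(r_{k-1}-r_{k+1})/(2^{k}\mu(A_{k}))$. Compatibility reduces to $|Du|(A_{k})\cdot\mu(A_{k})/(r_{k-1}-r_{k+1})\ll\eps^{2}\cdot 4^{-k}$, which can always be arranged by first fixing a sequence $\eta_{k}\to 0$ and then choosing the $r_{k}$ to decrease sufficiently rapidly, using $|Du|(\{d<r\})\to 0$ and the doubling of $\mu$ to control the factor $\mu(A_{k})/(r_{k-1}-r_{k+1})$. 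A secondary technical point is justifying the Fubini shift via the coarea identity $\int P(\{u>s\},A_{k})\,ds=|Du|(A_{k})$, which is standard in the metric setting under the $(1,1)$-Poincar\'e inequality assumption.
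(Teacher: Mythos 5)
Your approach is genuinely different from the paper's. The paper first invokes Proposition \ref{prop:boundary values of SBVf} (built on the quantization result Theorem \ref{thm:approximation} from \cite{L-2Appr}) to get a $\widehat{w}\in\BV(\Om)$ with $\mathcal H(S_{\widehat w})<\infty$ that achieves the same boundary behaviour, then passes to $w\in\SBV$ via Theorem \ref{thm:approximation of BV by SBV} from \cite{L-Appr}, and finally uses a single interior cutoff $\eta w_i+(1-\eta)w$ to kill all but an $\eps$-amount of $\mathcal H(S_w)$. You instead attempt a one-shot construction, quantizing $u$ directly on annular shells with varying step $\eta_k$ and gluing with a partition of unity. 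That is a reasonable-looking shortcut, but it has a real gap.

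The problem is that your discretization $v_k=\tau_k+\eta_k\lfloor(u-\tau_k)/\eta_k\rfloor$ controls $\Vert u-v_k\Vert_{L^\infty}$ but \emph{not} $\Vert u-v_k\Vert_{L^1}$: the $L^1$-distance is of order $\eta_k\mu(A_k)$, and indeed $v_k$ need not even lie in $L^1(A_k)$ (e.g.\ if $\tau_k\neq 0$ and $\mu(\{u=0\}\cap A_k)=\infty$, then $v_k\equiv\tau_k-\eta_k\neq 0$ on an infinite-measure set). The theorem is stated for an arbitrary nonempty open $\Om$, and the shells $A_k=\{r_{k+1}<d<r_{k-1}\}$ can easily have $\mu(A_k)=\infty$ (take $X=\R^2$ and $\Om$ a half-plane or strip). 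Then your second parameter constraint $\eta_k\cdot\Lip(\psi_k)\cdot\mu(A_k)<\eps\, 2^{-k-2}$ is never satisfiable, the absolutely continuous contribution $\sum_k\int|u-v_k|\,g_{\psi_k}\,d\mu$ is infinite, and $v$ fails to be in $L^1(\Om)$, let alone $\BV(\Om)$. Doubling of $\mu$ does not give finiteness of $\mu(A_k)$, so the fix you propose at the end does not close this. The paper sidesteps exactly this issue because the quantization in Theorem \ref{thm:approximation} produces $w$ with $\Vert w-u\Vert_{L^1(\Om)}$ \emph{arbitrarily} small (see condition \eqref{eq:vj u and test functions} in the proof of Proposition \ref{prop:boundary values of SBVf}), which is then chosen small enough to beat the Lipschitz constants of the cutoffs; a plain floor-function quantization cannot give this $L^1$-smallness. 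So your scheme works only under an extra hypothesis such as $\mu(\Om\setminus\Om(r_0))<\infty$, which the theorem does not assume.

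A secondary point: the identity $Dv=\sum_k\psi_k\,Dv_k-\sum_k(u-v_k)\,D\psi_k$ is a vector-valued statement that does not literally make sense in the metric setting, where only the scalar measure $\Vert Dv\Vert$ is available. To deduce $\Vert Dv\Vert^c=0$ you would need to argue via the Leibniz-rule inequality, the decomposition \eqref{eq:variation measure decomposition}, and the fact (from \cite{AMP}) that the Cantor part vanishes on $\sigma$-finite-$\mathcal H$ sets, applied to the upper bound $\sum_k\bigl(\int\psi_k\,d\Vert Dv_k\Vert+\int|v_k|\,g_{\psi_k}\,d\mu\bigr)$, whose singular part lives on $\bigcup_k S_{v_k}$. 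This is fixable, but as written it is an overreach. Likewise the jump-set bound should carry the density constant $\alpha$ from \eqref{eq:def of theta}, i.e.\ $\mathcal H(S_{v_k}\cap A_k)\le\alpha^{-1}\Vert Du\Vert(A_k)/\eta_k$, a harmless constant adjustment.
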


Note that $v\in \SBV(\Om)$ belongs to $N^{1,1}(\Om)$ if and only if
$\mathcal H(S_v)=0$
{(see \cite[Theorem 4.1]{KKST}, \eqref{eq:variation measure decomposition}, and \cite[Theorem 4.6]{HKLL}).}
Thus we could equivalently require
\begin{itemize}
\item $v\in \SBV(\Om)\cap \liploc(\Om)$ (in particular, $S_v=\emptyset)$ in Theorem \ref{thm:BV theorem 1},
\item $v\in \SBV(\Om)$ with $\mathcal H(S_v)=0$ in Theorem \ref{thm:BV theorem 2}, and
\item $v\in \SBV(\Om)$ with $\mathcal H(S_v)<\eps$ in Theorem \ref{thm:BV theorem 3},
\end{itemize}
illustrating how we get better boundary approximation
by relaxing the regularity requirements on $v$.

From Theorem \ref{thm:BV theorem 1} (or Theorem \ref{thm:BV theorem 2}), we obtain the following corollary.

\begin{corollary}\label{BV-Sobolev}
The trace spaces of $\BV(\Om)$ and $N^{1,1}(\Om)$ are the same.
\end{corollary}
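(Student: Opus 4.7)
The plan is to prove the two containments of trace spaces separately, and the corollary will reduce almost immediately to Theorem \ref{thm:BV theorem 1}. Since $N^{1,1}(\Om)\subset \BV(\Om)$, any function of the form $Tu$ with $u\in N^{1,1}(\Om)$ is automatically of the form $Tu'$ with $u'\in \BV(\Om)$, so the trace space of $N^{1,1}(\Om)$ is contained in the trace space of $\BV(\Om)$ with no work.

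For the reverse containment, I would start with $f$ in the trace space of $\BV(\Om)$, i.e.\ $f=Tu$ for some $u\in \BV(\Om)$, meaning that
\[
\lim_{r\to 0^+}\vint{B(x,r)\cap \Om}|u-f(x)|\,d\mu=0
\]
for $\mathcal H$-a.e.\ $x\in\bdy$. Apply Theorem \ref{thm:BV theorem 1} to produce $v\in N^{1,1}(\Om)\cap \liploc(\Om)$ with
\[
\vint{B(x,r)\cap \Om}|v-u|^{s/(s-1)}\,d\mu\to 0
\]
uniformly in $x\in\bdy$. Since $s/(s-1)\ge 1$, Jensen's inequality upgrades this to uniform convergence $\vint{B(x,r)\cap \Om}|v-u|\,d\mu\to 0$. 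Then the triangle inequality
\[
\vint{B(x,r)\cap \Om}|v-f(x)|\,d\mu\le \vint{B(x,r)\cap \Om}|v-u|\,d\mu+\vint{B(x,r)\cap \Om}|u-f(x)|\,d\mu
\]
shows that at every $x\in\bdy$ where $u$ has trace $f(x)$, the function $v$ also has trace $f(x)$. Hence $f=Tv$ belongs to the trace space of $N^{1,1}(\Om)$.

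There is no real obstacle in this argument; the substance lies entirely in Theorem \ref{thm:BV theorem 1}, and the only thing to verify is that the uniform $L^{s/(s-1)}$ approximation descends to a uniform $L^1$ approximation so that the trace carries over. Alternatively, Theorem \ref{thm:BV theorem 2} with $q=1$ could be used in place of Theorem \ref{thm:BV theorem 1} in the same argument, avoiding Jensen's inequality altogether.
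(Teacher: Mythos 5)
Your core idea is the right one: the hard direction is surjectivity, which Theorem \ref{thm:BV theorem 1} handles exactly as you describe (pass from uniform $L^{s/(s-1)}$ to uniform $L^1$ by Jensen, then use the triangle inequality to transfer the trace from $u$ to $v$), and the inclusion $N^{1,1}(\Om)\subset \BV(\Om)$ handles the other inclusion of images. This is the same mechanism as the paper's proof.

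However, there is a genuine gap: you treat ``trace space'' as just the image of the trace operator, whereas the paper's Definition \ref{trace-space} requires $\mathbb Z(\bx,\mathcal H)$ to be a Banach space for which the trace operator is a \emph{bounded} linear surjection, and the corollary is the biconditional ``$\mathbb Z$ is the trace space of $\BV(\Om)$ iff $\mathbb Z$ is the trace space of $N^{1,1}(\Om)$.'' Your argument verifies that the two trace operators have the same image, but never checks boundedness. In the direction where you assume $\mathbb Z$ is the trace space of $\BV(\Om)$ and want to conclude it is the trace space of $N^{1,1}(\Om)$, boundedness is cheap because $\Vert\cdot\Vert_{\BV(\Om)}\le\Vert\cdot\Vert_{N^{1,1}(\Om)}$. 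But in the other direction — $\mathbb Z$ is the trace space of $N^{1,1}(\Om)$, show it is the trace space of $\BV(\Om)$ — you must (i) show that $Th$ \emph{exists} for every $h\in\BV(\Om)$, not just for those $h$ which you already assumed to have a trace, and (ii) show $\Vert Th\Vert_{\mathbb Z}\lesssim \Vert h\Vert_{\BV(\Om)}$. Point (ii) cannot be extracted from Theorem \ref{thm:BV theorem 1} as stated in the introduction, since that statement gives no control on $\Vert v\Vert_{N^{1,1}(\Om)}$ in terms of $\Vert h\Vert_{\BV(\Om)}$. The paper instead invokes the quantitative version, Theorem \ref{thm:BV theorem 1 with more detail}, and chooses $\eps=\Vert h\Vert_{\BV(\Om)}/2$ there, which yields $\Vert v\Vert_{N^{1,1}(\Om)}\le 2\Vert h\Vert_{\BV(\Om)}$ and hence $\Vert Th\Vert_{\mathbb Z}=\Vert Tv\Vert_{\mathbb Z}\lesssim\Vert v\Vert_{N^{1,1}(\Om)}\le 2\Vert h\Vert_{\BV(\Om)}$. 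Without this quantitative step your argument does not establish that the trace operator from $\BV(\Om)$ is bounded, so the corollary as stated is not fully proved.
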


The definitions of  trace and trace space are given in Definition \ref{trace}
and Definition \ref{trace-space}. Here and throughout this paper, for two
Banach function spaces
{$\mathbb X(\Om)$ and $\mathbb Y(\Om)$,}
that the trace
spaces of $\mathbb X(\Omega)$ and $\mathbb Y(\Omega)$ are the same means that
if the Banach function space $\mathbb Z(\bx)$ is the trace space of
$\mathbb X(\Omega)$, then it is also the trace space of
$\mathbb Y(\Omega)$, and vice versa.

Corollary \ref{BV-Sobolev} is stronger than we
{expected; it says that we can}
obtain the existence of the trace and the trace space of $\BV(\Omega)$
by only knowing the existence of the trace and the trace space
of $N^{1,1}(\Omega)$, which is nontrivial, since $N^{1,1}(\Omega)$
is a strict subset of $\BV(\Omega)$.

The so-called Haj\l asz-Sobolev space $M^{1,p}(\Om)$, $p\geq 1$,
introduced in \cite{H96}, is a subspace of $N^{1,p}(\Om)$.
For $p>1$ and $\Om$ supporting a $(1,p)$-Poincar\'e inequality and
a doubling measure,
we have $N^{1,p}(\Omega)=M^{1, p}(\Omega)$ with equivalent norms, see
\cite{H03}, and hence
the traces of $M^{1,p}(\Omega)$ and $N^{1,p}(\Omega)$ will be the same.
But for $p=1$, even under these strong assumptions, $M^{1,1}(\Om)$
is only a strict subspace of $N^{1,1}(\Om)$ and it seems that
trace results for $M^{1,1}$ are lacking in the literature. One can also define a
local version $M^{1,1}_{c_H}(\Om)$, see Section \ref{sec:preliminaries} and
Remark \ref{M_1} for more information.
For these classes, we prove the following results.

\begin{theorem}\label{Sobolev-H}
Suppose $\Omega$ satisfies the measure density condition \eqref{measure-density}. Then there exists $0<c_H<1$ such that for any $u\in N^{1,1}(\Omega)$, there is $v\in M^{1,1}_{c_H}(\Omega)\cap \Lip_{\rm loc}(\Omega)$ satisfying $\|v\|_{M^{1,1}_{c_H}} (\Omega) \lesssim \|u\|_{N^{1,1}(\Omega)}$ and 
$$\lim_{r\rarrow 0^+} \vint{B(x, r)\cap \Omega} |v-u|\, d\mu=0$$
for $\mathcal{H}$-a.e. $x\in \partial \Omega$, where $\mathcal H$ is the codimension $1$ Hausdorff measure.

If additionally $\Omega$ is a uniform domain, then $v$ can be chosen in $M^{1,1}(\Omega)\cap \Lip_{\rm loc}(\Omega)$.
\end{theorem}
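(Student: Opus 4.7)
The plan is to construct $v$ via a Whitney-type discrete convolution of $u$, verify the $M^{1,1}_{c_H}$ estimate from the $(1,1)$-Poincar\'e inequality, and derive the boundary trace from the measure density condition. Fix a Whitney cover $\{B_i = B(x_i, r_i)\}$ of $\Om$ with $r_i$ comparable to $\dist(B_i, X\setminus\Om)$ and finite overlap, together with a Lipschitz partition of unity $\{\pip_i\}$ subordinate to a mild enlargement $\{\lambda B_i\}$ with $\Lip(\pip_i) \lesssim 1/r_i$, and set
\[
v(x) := \sum_i \pip_i(x)\, u_{B_i},\qquad u_{B_i} := \vint{B_i} u\, d\mu.
\]
Standard discrete convolution arguments in the metric Newton-Sobolev setting give $v\in\liploc(\Om)$, with pointwise Lipschitz constant at $x$ controlled by a Whitney-scale (localized) maximal function of the minimal $1$-weak upper gradient $g_u$ of $u$. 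Also $\|v\|_{L^1(\Om)}\le \|u\|_{L^1(\Om)}$ by convexity.

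For the Haj\l asz estimate, the plan is to show that for $x,y\in\Om$ with $d(x,y) \le c_H \min\{\dist(x,\bx), \dist(y,\bx)\}$ (the constant $c_H$ chosen in terms of the doubling constant and Whitney parameters), there is a uniformly bounded chain of Whitney balls joining $x$ to $y$. Applying the $(1,1)$-Poincar\'e inequality on each ball in the chain and telescoping gives
\[
|v(x)-v(y)|\lesssim d(x,y)\bigl(G(x)+G(y)\bigr),
\]
where $G$ is a (non-centered) maximal function of $g_u$ over balls of radius comparable to $d(x,y)$ that stay well inside $\Om$. Crucially, because the constraint $d(x,y)\le c_H\dist(x,\bx)$ keeps the integration depth away from $\bx$ and the chain length comparable to $d(x,y)$, a standard finite-overlap argument yields $\|G\|_{L^1(\Om)}\lesssim \|g_u\|_{L^1(\Om)}$ without invoking the full $L^1$-unbounded maximal operator. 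Thus $v\in M^{1,1}_{c_H}(\Om)$ with the claimed norm bound.

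For the trace, fix a boundary point $x\in\bx$ where the codimension $1$ Lebesgue-type property holds for $g_u$; this happens $\mathcal H$-a.e. on $\bx$ under \eqref{measure-density} by the standard metric Lebesgue differentiation for $L^1$ upper gradients. Writing $|v-u|\le \sum_i \pip_i|u-u_{B_i}|$ pointwise on $\Om$ and applying the Poincar\'e inequality on each Whitney ball meeting $B(x,r)$, one estimates
\[
\vint{B(x,r)\cap\Om} |v-u|\, d\mu \lesssim \frac{1}{\mu(B(x,r))}\int_{B(x,Cr)\cap \Om} \dist(\cdot,\bx)\, g_u\, d\mu
\]
(with the measure density absorbing $\mu(B(x,r)\cap\Om)$ into $\mu(B(x,r))$). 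The right-hand side tends to $0$ as $r\to 0^+$ at $\mathcal H$-a.e.\ $x\in\bx$, which is exactly the codimension $1$ Lebesgue differentiation for the $L^1$ density $\dist(\cdot,\bx)\, g_u/r$ at points of positive lower boundary density.

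Finally, when $\Om$ is uniform, I would upgrade $M^{1,1}_{c_H}(\Om)$ to $M^{1,1}(\Om)$ by a standard chaining argument along uniform curves: any $x,y\in\Om$ can be joined by a chain of Whitney balls of total length $\lesssim d(x,y)$ and with cross-section growing linearly from the endpoints. Iterating the local Haj\l asz inequality along such a chain and using the bounded overlap of the chain balls produces a global Haj\l asz gradient whose $L^1$-norm is still controlled by $\|g_u\|_{L^1(\Om)}$, because in the uniform setting the associated averaging operator is bounded on $L^1$ (no unrestricted maximal function is needed).

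The main obstacle is the trace step: since the Hardy-Littlewood maximal function of an $L^1$ function is not $L^1$, one cannot hope for uniform boundary convergence and must settle for $\mathcal H$-a.e.\ convergence. This forces the argument to combine three ingredients simultaneously—the measure density at $\mathcal H$-a.e.\ boundary point, the Poincar\'e inequality across Whitney scales to absorb the factor $1/r$, and a codimension $1$ Lebesgue differentiation for $g_u\in L^1(\Om)$—and verifying that these combine cleanly at $\mathcal H$-a.e.\ point is the delicate technical core of the theorem.
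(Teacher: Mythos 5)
Your overall strategy---Whitney discrete convolution $v=u_W$, a Poincar\'e-based Haj\l asz estimate to get $v\in M^{1,1}_{c_H}(\Om)$, and a Vitali-type covering argument for the $\mathcal H$-a.e.\ boundary convergence---is exactly the route the paper takes. The main construction is the paper's Theorem~\ref{convolution}, and the trace step you sketch (estimating $\vint{B(x,r)\cap\Om}|u_W-u|$ by $\mu(B(x,r))^{-1}\int_{B(x,Cr)\cap\Om}\dist(\cdot,\bx)\,g_u\,d\mu$ and showing the upper codimension-one density of $g_u\chi_\Om\,d\mu$ vanishes $\mathcal H$-a.e.\ on $\bx$) is precisely the content of Proposition~\ref{trace-H}, which the paper imports from \cite[Proposition 6.5]{LS18} rather than reproving. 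You also correctly identify that the measure density condition \eqref{measure-density} is used only to pass from the normalization $\mu(B(x,r))^{-1}$ to $\vint{B(x,r)\cap\Om}$.

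There is, however, a genuine gap in your upgrade from $M^{1,1}_{c_H}(\Om)$ to $M^{1,1}(\Om)$ in the uniform case. You assert that ``in the uniform setting the associated averaging operator is bounded on $L^1$ (no unrestricted maximal function is needed).'' This is not so. Chaining along uniform curves joining \emph{arbitrary} $x,y\in\Om$ inevitably produces, after telescoping $|u_{B_k}-u_{B_{k+1}}|$ along the cigar, a bound $d(x,y)\bigl(G(x)+G(y)\bigr)$ in which $G(x)$ must dominate $\vint{B(x,\rho)}g_u\,d\mu$ for every $\rho$ up to a fixed fraction of $\diam\Om$---that is, the unrestricted Hardy--Littlewood maximal function of $g_u$ (the cutoff at the diameter is useless for $L^1$). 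There is no ``bounded overlap'' to exploit: the chain balls vary with the pair $(x,y)$ and, ranging over all pairs, fill out $\Om$ without any multiplicity bound. The paper's Theorem~\ref{local-global} resolves this by applying the self-improving Poincar\'e inequality of Theorem~\ref{thm:PI} (= \cite[Theorem 9.2]{H03}) with the sub-linear exponent $p=s/(s+1)<1$: the telescoping then gives $|u_{B_k}-u_{B_{k+1}}|\lesssim r_k\bigl(\M g_u^{s/(s+1)}(x)\bigr)^{(s+1)/s}+\cdots$, and the final Haj\l asz gradient $\widetilde g = C\bigl(\M g_u^{s/(s+1)}\bigr)^{(s+1)/s}$ satisfies
\[
\|\widetilde g\|_{L^1} = \bigl\|\M g_u^{s/(s+1)}\bigr\|_{L^{(s+1)/s}}^{(s+1)/s} \lesssim \bigl\|g_u^{s/(s+1)}\bigr\|_{L^{(s+1)/s}}^{(s+1)/s} = \|g_u\|_{L^1},
\]
using that $\M$ is bounded on $L^{(s+1)/s}$ with $(s+1)/s>1$. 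Without this $p<1$ trick (or an equivalent Riesz-potential estimate), the chaining argument does not close, so as written your uniform step does not yield the claimed $L^1$-bound.
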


With the exception of this theorem, our results are not written
in terms of the codimension 1 Hausdorff measure $\mathcal H$ (defined in \eqref{def-codimension} and \eqref{def-codimension-1}) which is
used in most existing literature.
In Theorems \ref{thm:BV theorem 1}--\ref{thm:BV theorem 3}, 
the results hold for every point on the boundary.
On the other hand, the space or domain may be endowed with a measure $\mu$
for which the codimension $1$ Hausdorff measure is not $\sigma$-finite
on the boundary of the domain (see Example \ref{ex:weight}). More precisely,
in Example \ref{ex:weight} we define a weighted measure on the Euclidean
half-space $\R^2_{+}$ whose codimension $1$ Hausdorff measure is infinity
for any open interval of $\partial \R^{2}_+=\R$. But on $\R^2_{+}$, it
is natural to study instead the trace with respect to the $1$-dimensional
Lebesgue measure on $\R$,
which we do in Example \ref{dyadic-trace}.
{Another motivation for us is that in certain Dirichlet problems
	one needs to consider the trace with respect to a measure different from
	$\mathcal H$, see \cite[Definition 4.1]{KLLS}.}

{More generally,}
instead of only studying the codimension $1$ Hausdorff measure, we may study any arbitrary boundary measure $\widetilde{\mathcal H}$ on $\bx$. 
In order to study such problems,
we first replace
the codimension $1$ Hausdorff measure $\mathcal H$ with
$\widetilde{\mathcal H}$ in the previous definition of trace to give
the definition of trace with respect to $\widetilde{\mathcal H}$, see
Definition \ref{trace-space-measure}.
Then we prove the following result.

\begin{theorem}\label{Sobolev-H-measure}
Suppose $\Omega$ satisfies the measure doubling condition  \eqref{measure-doubling}.
Let $\widetilde {\mathcal H}$ be any Radon measure on $\partial \Omega$.
Suppose that for a given $u\in N^{1,1}(\Omega)$, there exists a function $Tu$ such that
$$\lim_{r\rarrow 0^+} \vint{B(x, r)\cap \Omega} |u-Tu(x)|\, d\mu=0$$
for $\widetilde {\mathcal H}$-a.e. $x\in \partial\Omega$. Then
there exist  $0<c_H<1$ and
$v\in M^{1,1}_{c_H}(\Omega)\cap \Lip_{\rm loc}(\Omega)$ such that
$\|v\|_{M^{1,1}_{c_H}} (\Omega) \lesssim \|u\|_{N^{1,1}(\Omega)}$ and 
$$\lim_{r\rarrow 0^+} \vint{B(x, r)\cap \Omega} |v-Tu(x)|\, d\mu=0$$
for $\widetilde{\mathcal{H}}$-a.e. $x\in \partial \Omega$.

If additionally $\Omega$ is a uniform domain, then $v$ can be chosen in $M^{1,1}(\Omega)\cap \Lip_{\rm loc}(\Omega)$.
\end{theorem}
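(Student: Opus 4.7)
The idea is to recycle the function $v$ produced in the proof of Theorem \ref{Sobolev-H} verbatim, and then to verify the trace condition not through any property of the codimension $1$ Hausdorff measure, but directly at every boundary point $x$ at which the assumed trace $Tu$ exists. Concretely I would take a Whitney-type covering $\{B_i\}$ of $\Om$ with $\rad(B_i)\sim \dist(B_i, X\setminus \Om)$ and bounded overlap, together with a Lipschitz partition of unity $\{\varphi_i\}$ subordinate to the dilations $\{\lambda B_i\}$, and define the discrete convolution
\[
v\ =\ \sum_{i} u_{B_i}\,\varphi_i,\qquad u_{B_i}\ =\ \vint{B_i} u\,d\mu.
\]
The verification that $v\in M^{1,1}_{c_H}(\Om)\cap \liploc(\Om)$ with $\|v\|_{M^{1,1}_{c_H}(\Om)} \lesssim \|u\|_{N^{1,1}(\Om)}$ is identical to the corresponding step in the proof of Theorem \ref{Sobolev-H}: it does not involve the boundary measure at all and uses only the $(1,1)$-Poincar\'e inequality, doubling of $\mu$, and chaining along the Whitney covering.

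Now fix any $x\in \bx$ at which the hypothesis $\vint{B(x,r)\cap \Om}|u-Tu(x)|\,d\mu\to 0$ holds; the goal is to show the analogous statement for $v$ in place of $u$. For $y\in B(x,r)\cap \Om$, the identity $\sum_j \varphi_j\equiv 1$ yields
\[
v(y) - Tu(x)\ =\ \sum_{j}\bigl(u_{B_j}-Tu(x)\bigr)\varphi_j(y),
\]
and only those $j$ with $y\in \supp \varphi_j\subset \lambda B_j$ contribute. Standard Whitney geometry, combined with the fact that $x\in X\setminus \Om$ forces $\dist(B_j,x)\gtrsim \rad(B_j)$, shows $\rad(B_j)\lesssim r$ and hence $B_j\subset B(x,Cr)\cap \Om$ for all such $j$, with a constant $C$ depending only on the Whitney and partition parameters.

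Integrating in $y$, swapping sum and integral, using $|\varphi_j|\le 1$, doubling of $\mu$ to bound $\mu(\lambda B_j)\lesssim \mu(B_j)$, and the elementary inequality
\[
|u_{B_j}-Tu(x)|\,\mu(B_j)\ \le\ \int_{B_j}|u-Tu(x)|\,d\mu,
\]
together with the bounded overlap of $\{B_j\}$, I obtain
\[
\int_{B(x,r)\cap \Om}|v-Tu(x)|\,d\mu\ \lesssim\ \int_{B(x,Cr)\cap \Om}|u-Tu(x)|\,d\mu.
\]
Dividing by $\mu(B(x,r)\cap \Om)$ and invoking the measure doubling assumption \eqref{measure-doubling} to absorb the ratio $\mu(B(x,Cr)\cap \Om)/\mu(B(x,r)\cap \Om)$ into a constant gives
\[
\vint{B(x,r)\cap \Om}|v-Tu(x)|\,d\mu\ \lesssim\ \vint{B(x,Cr)\cap \Om}|u-Tu(x)|\,d\mu\ \longrightarrow\ 0.
\]
Since this holds at \emph{every} trace point of $u$, it holds $\widetilde{\mathcal H}$-a.e.\ on $\bx$. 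The uniform-domain addendum is then inherited verbatim from the corresponding part of Theorem \ref{Sobolev-H}: John-type chains in a uniform domain upgrade the local class $M^{1,1}_{c_H}$ to the global $M^{1,1}$, with the same $v$ and the same norm bound.

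The main technical obstacle I expect is the Whitney bookkeeping needed to pin down $B_j\subset B(x,Cr)$ with an explicit $C$; once that step is clean, the key point is that nothing particular to $\mathcal H$ is used, so the trace assumption transfers from $u$ to $v$ at every single $x$, not merely $\mathcal H$-a.e., and the conclusion is an immediate consequence of measure doubling and the triangle inequality.
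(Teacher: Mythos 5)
Your proposal is correct and follows essentially the same route as the paper: take the Whitney discrete convolution $v=u_W$, invoke the $M^{1,1}_{c_H}$-norm estimate from Theorem \ref{convolution}, and show pointwise at each trace point $x$ that $\vint{B(x,r)\cap\Om}|v-Tu(x)|\,d\mu\lesssim\vint{B(x,Cr)\cap\Om}|u-Tu(x)|\,d\mu$ via Whitney geometry and measure doubling — which is precisely the content of the paper's Lemma \ref{trace-h} — with the uniform-domain upgrade from Theorem \ref{local-global}. You simply re-derive Lemma \ref{trace-h} inline rather than citing it as a separate statement.
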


Similarly to Corollary \ref{BV-Sobolev},
from Theorem \ref{Sobolev-H} and Theorem \ref{Sobolev-H-measure} we obtain
the following corollary.

\begin{corollary}\label{SObolev-M-2}
Let $\Om\subset X$ be a uniform domain
and suppose that $\Omega$ satisfies the measure doubling condition
\eqref{measure-doubling}. Then for any given boundary measure
$\widetilde {\mathcal H}$, the trace spaces of $N^{1,1}(\Om)$ and $M^{1,1}(\Om)$
with respect to any boundary measure $\widetilde {\mathcal H}$ on $\bx$ are the same.
\end{corollary}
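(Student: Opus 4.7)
The plan is to deduce the corollary as a formal consequence of Theorem \ref{Sobolev-H-measure} combined with the continuous inclusion $M^{1,1}(\Omega)\hookrightarrow N^{1,1}(\Omega)$. Because the notion of ``same trace space'' is a two-sided implication, I would verify separately the two directions: if $\mathbb{Z}(\bx)$ is the trace space of one of $N^{1,1}(\Omega),\,M^{1,1}(\Omega)$ with respect to $\widetilde{\mathcal H}$, then it is also the trace space of the other. For each direction one must check both the boundedness and the surjectivity of the trace operator on the target space.

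For the direction ``$N^{1,1}$ trace space $\Rightarrow$ $M^{1,1}$ trace space'', boundedness is immediate: any $u\in M^{1,1}(\Omega)$ also lies in $N^{1,1}(\Omega)$ with $\|u\|_{N^{1,1}(\Omega)}\lesssim \|u\|_{M^{1,1}(\Omega)}$, so the hypothesis yields $\|Tu\|_{\mathbb Z(\bx)}\lesssim \|u\|_{M^{1,1}(\Omega)}$. For surjectivity, given $f\in\mathbb Z(\bx)$, pick $u\in N^{1,1}(\Omega)$ with $Tu=f$, and apply Theorem \ref{Sobolev-H-measure} in the uniform-domain formulation to produce $v\in M^{1,1}(\Omega)\cap\Lip_{\rm loc}(\Omega)$ whose $L^1$-average boundary values agree $\widetilde{\mathcal H}$-a.e.\ with $Tu$; thus $Tv=f$ in $\mathbb Z(\bx)$. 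The reverse direction ``$M^{1,1}$ trace space $\Rightarrow$ $N^{1,1}$ trace space'' is analogous but uses the theorem in the opposite way: surjectivity is free since $M^{1,1}(\Omega)\subset N^{1,1}(\Omega)$, while for boundedness, given $u\in N^{1,1}(\Omega)$ with trace $Tu\in\mathbb Z(\bx)$, Theorem \ref{Sobolev-H-measure} supplies $v\in M^{1,1}(\Omega)\cap\Lip_{\rm loc}(\Omega)$ with $Tv=Tu$ and $\|v\|_{M^{1,1}(\Omega)}\lesssim \|u\|_{N^{1,1}(\Omega)}$, whence
\[
\|Tu\|_{\mathbb Z(\bx)}=\|Tv\|_{\mathbb Z(\bx)}\lesssim \|v\|_{M^{1,1}(\Omega)}\lesssim \|u\|_{N^{1,1}(\Omega)}.
\]

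The only nontrivial point, and the one I would be most careful about, is the upgrade from the local Haj\l asz norm $\|v\|_{M^{1,1}_{c_H}(\Omega)}$ appearing in Theorem \ref{Sobolev-H-measure} to the global Haj\l asz norm $\|v\|_{M^{1,1}(\Omega)}$ in the uniform-domain setting. Once this is justified---either by invoking the equivalence of the local and global Haj\l asz norms on uniform domains (cf.\ Section \ref{sec:preliminaries} and Remark \ref{M_1}) or by tracing through the construction in the proof of Theorem \ref{Sobolev-H-measure}---the rest of the argument is a bookkeeping of inclusions and norm inequalities. No new analytic ingredients are needed beyond the two theorems already proved.
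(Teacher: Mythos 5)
Your overall strategy matches the paper's intent: the authors also deduce the corollary ``by adapting the proof of Corollary \ref{BV-Sobolev}'' using Theorem \ref{Sobolev-H-measure} together with the continuous embedding $M^{1,1}(\Om)\hookrightarrow N^{1,1}(\Om)$ from \cite[Theorem 8.6]{H03}. The point you single out as the only nontrivial one---upgrading $\|\cdot\|_{M^{1,1}_{c_H}(\Om)}$ to $\|\cdot\|_{M^{1,1}(\Om)}$---is indeed handled exactly by Theorem \ref{local-global} and is already built into the uniform-domain clause of Theorem \ref{Sobolev-H-measure}, so your treatment of that is fine.

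However, there is a genuine gap in your reverse direction, and it is \emph{not} the one you flag. You write ``given $u\in N^{1,1}(\Omega)$ with trace $Tu\in\mathbb Z(\bx)$'', but the whole content of the direction ``$M^{1,1}$ trace space $\Rightarrow$ $N^{1,1}$ trace space'' is to \emph{establish} that every $u\in N^{1,1}(\Om)$ has a trace (cf.\ Definition \ref{trace-space}, which requires the trace operator to be defined on all of $\mathbb X(\Om)$). Theorem \ref{Sobolev-H-measure} takes the existence of $Tu$ as a hypothesis, so invoking it here is circular. Contrast this with the proof of Corollary \ref{BV-Sobolev}: there the corresponding direction works because Theorem \ref{thm:BV theorem 1 with more detail} needs \emph{no} trace assumption and produces $v\in N^{1,1}(\Om)$ with $\vint{B(x,r)\cap\Om}|v-h|^{s/(s-1)}\,d\mu\to 0$ uniformly for \emph{all} $x\in\bx$; this uniform smallness transfers the existence of $Tv$ (guaranteed by the trace-space hypothesis on $N^{1,1}$) to the existence of $Th$. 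For the present corollary, the natural replacement would be: given $u\in N^{1,1}(\Om)$, take $v=u_W$ its discrete convolution; then $v\in M^{1,1}(\Om)$ with $\|v\|_{M^{1,1}(\Om)}\lesssim\|u\|_{N^{1,1}(\Om)}$ by Theorem \ref{convolution} and Theorem \ref{local-global}, so $Tv$ exists $\widetilde{\mathcal H}$-a.e.\ by the assumed trace-space property of $M^{1,1}(\Om)$. But to conclude that $Tu$ exists and coincides with $Tv$, one needs $\vint{B(x,r)\cap\Om}|u-u_W|\,d\mu\to 0$ for $\widetilde{\mathcal H}$-a.e.\ $x$; Lemma \ref{trace-h} only yields the one-sided estimate $\vint{B(z,r)\cap\Om}|u_W-a|\,d\mu\lesssim\vint{B(z,2r)\cap\Om}|u-a|\,d\mu$, and Proposition \ref{trace-H} gives the desired convergence only for $\mathcal H$-a.e.\ $x$, not for a general $\widetilde{\mathcal H}$. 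So this direction requires an extra argument that is neither in your proposal nor made explicit in the paper's terse sketch, and it should be identified rather than assumed away.
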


The paper is organized as follows. In Section \ref{sec:preliminaries}, we give the necessary preliminaries. In Section 3, we study the traces of $N^{1,1}$ and
$\BV$ and give the proofs of Theorems
\ref{thm:BV theorem 1}--\ref{thm:BV theorem 3} and Corollary \ref{BV-Sobolev}.
In Section 4, we study the traces of $N^{1,1}$ and $M^{1,1}$ and
give the proofs of Theorem \ref{Sobolev-H}, Theorem \ref{Sobolev-H-measure},
and Corollary \ref{SObolev-M-2}. Finally, in Section 5, apart from giving
several examples that we refer to in Section 3 and Section 4, we also discuss
some trace results and examples obtained as applications of
Corollary \ref{BV-Sobolev}
and Corollary \ref{SObolev-M-2}.

\bigskip

{\noindent\bf Acknowledgments.}
The authors would like to thank Pekka Koskela
and Nageswari Shanmugalingam for reading the manuscript and giving comments that helped improve the paper.  

X. L. is supported by NNSF of China (No. 11701582). Z. W. is supported by the Academy of Finland via  Centre of Excellence in Analysis and Dynamics Research (No. 307333) and partially supported by the grant 346300 for IMPAN from the Simons Foundation and the matching 2015-2019 Polish MNiSW fund.

\section{Preliminaries}\label{sec:preliminaries}

In this section we introduce the notation, definitions,
and assumptions used in the paper.

Throughout this paper, $(X,d,\mu)$ is a complete {}metric space that is equip\-ped
with a metric $d$ and a Borel regular outer measure $\mu$ satisfying
a doubling property, meaning that
there exists a constant $C_d\ge 1$ such that
\[
0<\mu(B(x,2r))\le C_d\mu(B(x,r))<\infty
\]
for every ball $B(x,r):=\{y\in X:\,d(y,x)<r\}$.
By iterating the doubling condition, for every $0<r\leq R$ and $y\in B(x, R)$, we have 
\begin{equation}\label{eq:homogeneous dimension}
\frac{\mu(B(y, r))}{\mu(B(x, R))}\geq 4^{-s} \left(\frac{r}{R}\right)^s,
\end{equation}
{for any $s\ge\log_2 C_d$. See \cite[Lemma 4.7]{H03} or \cite{BB}
for a proof of this.
We fix such an $s>1$ and call it}
the {\it  homogeneous dimension}. 

 The letters $c,C$ (sometimes with a subscript) will denote positive constants
that usually depend only on the space and may change at
different occurrences; if $C$ depends on $a,b,\ldots$, we write
$C=C(a,b,\ldots)$.
The notation $A\approx B$ means that there is a constant
$C$ such that $1/C\cdot A\leq B\leq C\cdot A$. The notation $A\lesssim B$
($A\gtrsim B$) means that there is a constant $C$ such that
$A\leq  C\cdot B$ ($A\geq C\cdot B$).

All functions defined on $X$ or its subsets will take values in $[-\infty,\infty]$.
A complete metric space equipped with a doubling measure is proper,
that is, closed and bounded sets are compact.
For an open set $\Omega\subset X$,
a function is in the class $L^1_{\loc}(\Omega)$ if and only if it is in $L^1(\Om')$ for
every open $\Omega'\Subset\Omega$.
Here $\Omega'\Subset\Omega$ means that $\overline{\Omega'}$ is a
compact subset of $\Omega$.
Other local spaces of functions are defined similarly.

For any set $A\subset X$ and $0<R<\infty$, the restricted spherical Hausdorff content
of codimension $1$ is defined as
\begin{equation}\label{def-codimension}
\mathcal{H}_{R}(A):=\inf\left\{\sum_{j\in I}
\frac{\mu(B(x_{j},r_{j}))}{r_{j}}:\,A\subset\bigcup_{j\in I}B(x_{j},r_{j}),
\,r_{j}\le R,\,I\subset\N\right\}.
\end{equation}
The codimension $1$ Hausdorff measure of $A\subset X$ is then defined as
\begin{equation}\label{def-codimension-1}
\mathcal{H}(A):=\lim_{R\rightarrow 0^+}\mathcal{H}_{R}(A).
\end{equation}

Given an open set $\Om\subset X$, we can regard it as a metric space in its own right, equipped with the metric induced by $X$ and the measure $\mu|_{\Omega}$ which is the restriction of $\mu$ to subsets of $\Om$. This restricted measure $\mu|_{\Omega}$ is a Radon measure, see \cite[Lemma 3.3.11]{HKST}.

We say that an open set $\Omega$ satisfies a {\it measure density condition} if there is a constant $c_m>0$ such that 
\begin{equation}\label{measure-density}
\mu(B(x, r)\cap \Omega)\geq c_m \mu(B(x, r))
\end{equation}
{for every $x\in \overline{\Om}$ and every $r\in (0, \diam (\Omega))$.}
We say that
$\Omega$ satisfies a {\it measure doubling condition} if the measure $\mu|_{\Omega}$ is a doubling measure, i.e., there is a constant $c_d>0$ such that 
 \begin{equation}\label{measure-doubling}
 0<\mu(B(x, 2r)\cap \Om)\leq c_d \mu(B(x, r)\cap\Om)<\infty
 \end{equation}
 for every $x\in \overline{\Om}$
and every $r>0$. Notice that if $\Omega$ satisfies the measure density condition,
then it satisfies the measure doubling condition.

By a curve we mean a rectifiable continuous mapping from a compact interval
of the real line into $X$.
A nonnegative Borel function $g$ on $X$ is an upper gradient 
of a function $u$
on $X$ if for all nonconstant curves $\gamma$, we have
\begin{equation}\label{eq:definition of upper gradient}
|u(x)-u(y)|\le \int_\gamma g\,ds,
\end{equation}
where $x$ and $y$ are the end points of $\gamma$
and the curve integral is defined by using an arc-length parametrization,
see \cite[Section 2]{HK} where upper gradients were originally introduced.
We interpret $|u(x)-u(y)|=\infty$ whenever  
at least one of $|u(x)|$, $|u(y)|$ is infinite.

We say that a family of curves $\Gamma$ is of zero $1$-modulus if there is a 
nonnegative Borel function $\rho\in L^1(X)$ such that 
for all curves $\gamma\in\Gamma$, the curve integral $\int_\gamma \rho\,ds$ is infinite.
A property is said to hold for $1$-almost every curve
if it fails only for a curve family with zero $1$-modulus. 
If $g$ is a nonnegative $\mu$-measurable function on $X$
and (\ref{eq:definition of upper gradient}) holds for $1$-almost every curve,
we say that $g$ is a $1$-weak upper gradient of $u$. 
By only considering curves $\gamma$ in $A\subset X$,
we can talk about a function $g$ being a ($1$-weak) upper gradient of $u$ in $A$.

Given a $\mu$-measurable set $H\subset X$, we let
\[
\Vert u\Vert_{N^{1,1}(H)}:=\Vert u\Vert_{L^1(H)}+\inf \Vert g\Vert_{L^1(H)},
\]
where the infimum is taken over all $1$-weak upper gradients $g$ of $u$ in $H$.
The substitute for the Sobolev space $W^{1,1}$ in the metric setting is the Newton-Sobolev space
\[
N^{1,1}(H):=\{u:\|u\|_{N^{1,1}(H)}<\infty\},
\]
which was first introduced in \cite{S}.
It is known that for any $u\in N_{\loc}^{1,1}(H)$ there exists a minimal $1$-weak
upper gradient of $u$ in $H$, always denoted by $g_{u}$, satisfying $g_{u}\le g$ 
$\mu$-a.e. in $H$, for any $1$-weak upper gradient $g\in L_{\loc}^{1}(H)$
of $u$ in $H$, see \cite[Theorem 2.25]{BB}.

Next we present the basic theory of functions
of bounded variation on metric spaces. This was first developed in
\cite{A1, M}; see also the monographs \cite{AFP, EvaG92, Fed, Giu84, Zie89} for the classical 
theory in Euclidean spaces.
We will always denote by $\Om$ an open subset of $X$.
Given a function $u\in L^1_{\loc}(\Om)$,
we define the total variation of $u$ in $\Om$ by
\begin{equation}\label{eq:total variation}
\|Du\|(\Om):=\inf\left\{\liminf_{i\to\infty}\int_\Om g_{u_i}\,d\mu:\,
{u_i\in N^{1,1}_{\loc}(\Om),}\,
u_i\to u\textrm{in } L^1_{\loc}(\Om)\right\},
\end{equation}
where each $g_{u_i}$ is the minimal $1$-weak upper gradient of $u_i$
in $\Om$.
(In \cite{M}, local Lipschitz constants were used in place of upper gradients, but the theory
can be developed similarly with either definition.)
We say that a function $u\in L^1(\Om)$ is of bounded variation, 
and denote $u\in\BV(\Om)$, if $\|Du\|(\Om)<\infty$.
For an arbitrary set $A\subset X$, we define
\[
\|Du\|(A):=\inf\{\|Du\|(W):\, A\subset W,\,W\subset X
\text{is open}\}.
\]

\begin{proposition}[{\cite[Theorem 3.4]{M}}]\label{thm:variation measure property}
If $u\in L^1_{\loc}(\Om)$, then $\|Du\|(\cdot)$ is
a Borel measure on $\Omega$.
\end{proposition}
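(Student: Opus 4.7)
The plan is to verify the hypotheses of the De Giorgi--Letta criterion for the set function $\lambda(U):=\|Du\|(U)$ defined on open subsets $U\subset\Om$, and then invoke that criterion to conclude that $\lambda$ is the restriction to open sets of a Borel measure. Since the extension to arbitrary sets given in the paper is exactly the outer-regular extension, this will identify $\|Du\|(\cdot)$ as a Borel (in fact Borel regular) measure on $\Om$. Recall that the De Giorgi--Letta criterion requires: (i) $\lambda(\emptyset)=0$, (ii) monotonicity, (iii) superadditivity on disjoint open sets, (iv) countable subadditivity on open sets, and (v) inner regularity $\lambda(U)=\sup\{\lambda(V):V\Subset U\text{ open}\}$.

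The easy properties come first. Property (i) is immediate, and monotonicity (ii) follows because if $U\subset V$ are open then any admissible sequence $u_i\to u$ in $L^1_{\loc}(V)$ restricts to an admissible sequence on $U$, with the integrals over $U$ being no larger than those over $V$. For superadditivity (iii) on disjoint open $U_1,U_2\subset\Om$, an admissible sequence $u_i\to u$ in $L^1_{\loc}(U_1\cup U_2)$ restricts to admissible sequences on each $U_j$; since the minimal $1$-weak upper gradient only decreases under restriction, $g_{u_i|_{U_j}}\le g_{u_i}$ on $U_j$, and disjointness gives $\int_{U_1}g_{u_i}\,d\mu+\int_{U_2}g_{u_i}\,d\mu=\int_{U_1\cup U_2}g_{u_i}\,d\mu$. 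The elementary inequality $\liminf_i(a_i+b_i)\ge\liminf_i a_i+\liminf_i b_i$ then yields $\|Du\|(U_1)+\|Du\|(U_2)\le\|Du\|(U_1\cup U_2)$.

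The main technical step is countable subadditivity (iv): given open sets $\{U_j\}$ covering $U=\bigcup_j U_j$, we must show $\|Du\|(U)\le\sum_j\|Du\|(U_j)$. The strategy is a cutoff-gluing argument. Fix $\eps>0$ and an exhaustion of $U$ by sets $W_k\Subset U$; choose approximating sequences $u_i^j\to u$ in $L^1_{\loc}(U_j)$ realizing $\|Du\|(U_j)$ up to $\eps/2^j$; and choose a locally finite Lipschitz partition of unity $\{\varphi_j\}$ subordinate to $\{U_j\}$ (which exists by properness of $X$). Define $v_i:=\sum_j\varphi_j u_i^j$. Using the Leibniz-type upper gradient estimate valid in the metric setting, a $1$-weak upper gradient of $\varphi_j u_i^j$ on $U_j$ is $\varphi_j g_{u_i^j}+|u_i^j|\Lip\varphi_j$, and summing together with the identity $\sum_j\varphi_j\equiv 1$ (so that $\sum_j u\,\Lip\varphi_j$ cancels against $\sum_j(u_i^j-u)\Lip\varphi_j$-type terms up to a quantity controlled by $\|u_i^j-u\|_{L^1(U_j)}$) produces $v_i\to u$ in $L^1_{\loc}(U)$ with $\liminf_i\int_U g_{v_i}\,d\mu\le\sum_j\|Du\|(U_j)+C\eps$. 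Letting $\eps\to 0$ concludes. Inner regularity (v) is then a direct consequence: given $V\Subset U$ and a sequence $u_i\to u$ nearly realizing $\|Du\|(U)$, restricting to $V$ shows $\|Du\|(V)$ can be made arbitrarily close to $\|Du\|(U)$ by exhausting.

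The main obstacle is the gluing step. In the Euclidean setting one has the chain rule $\nabla(\varphi_j u_i^j)=\varphi_j\nabla u_i^j+u_i^j\nabla\varphi_j$, but in the metric setting we only have the product upper-gradient estimate, which is sharp enough but must be handled together with the lack of a canonical derivative. In particular, one must arrange the partition of unity so that the ``error'' term $\sum_j(u_i^j-u)\Lip\varphi_j$ vanishes in $L^1$ as $i\to\infty$ on each compact subset of $U$, which is where local finiteness of $\{\varphi_j\}$ and the $L^1_{\loc}$ convergence $u_i^j\to u$ are essential. Once this is in place, the De Giorgi--Letta theorem delivers the Borel measure and hence the proposition.
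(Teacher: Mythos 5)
The paper does not prove this proposition itself; it cites Miranda's Theorem~3.4. Your De Giorgi--Letta strategy is indeed the one Miranda uses (and the one standard in the Euclidean BV literature), and properties (i)--(iii) are handled correctly. There are, however, two genuine issues with the way you carry out the remaining steps.

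First, in the subadditivity step the naive definition $v_i:=\sum_j\varphi_j u_i^j$ with a general Lipschitz partition of unity does \emph{not} yield the advertised bound on $g_{v_i}$. The Leibniz estimate gives $g_{\varphi_j u_i^j}\le \varphi_j g_{u_i^j}+|u_i^j|\,g_{\varphi_j}$, so the error term is $\sum_j |u_i^j|\,g_{\varphi_j}$, which does not vanish as $i\to\infty$ and is not controlled by $\|u_i^j-u\|_{L^1}$. Your parenthetical suggestion that ``$\sum_j u\,\Lip\varphi_j$ cancels'' would require an upper-gradient bound for $u$ itself, which is not available for a general $u\in L^1_{\loc}$. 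The correct cancellation relies on a \emph{telescoping} partition of unity $\varphi_j=\eta_j-\eta_{j-1}$ built from an increasing family of cutoffs, so that after Abel summation the error terms become $g_{\eta_j}|u_i^{j+1}-u_i^{j}|$, i.e.~differences of two \emph{consecutive} approximants on an overlap region; these do tend to zero in $L^1$ since both approximants converge to $u$ there. This is exactly the structure the paper itself uses in Lemma~\ref{lem:choosing approximating function} (note the induction on $g_{v_i}$ there), and it is what makes the gluing rigorous.

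Second, your inner-regularity argument is not correct. Restricting an approximating sequence from $U$ to $V\Subset U$ only shows $\|Du\|(V)\le\liminf_i\int_V g_{u_i}\,d\mu\le\|Du\|(U)$, which is monotonicity, not inner regularity: it gives an upper bound on $\|Du\|(V)$, whereas you need a lower bound showing $\sup_{V\Subset U}\|Du\|(V)\ge\|Du\|(U)$. The standard argument runs the other way: fix an exhaustion $A_1\Subset A_2\Subset\cdots$ of $U$ and set $C_k:=A_{k+1}\setminus\overline{A_{k-1}}$. Superadditivity on the \emph{disjoint} even-indexed annuli and on the odd-indexed annuli gives $\sum_k\|Du\|(C_k)\le 2\|Du\|(U)<\infty$ (when the latter is finite), hence the tail $\sum_{k>K}\|Du\|(C_k)$ tends to $0$; then the already-established subadditivity over the cover $\{A_{K+1}\}\cup\{C_k\}_{k>K}$ yields $\|Du\|(U)\le\|Du\|(A_{K+1})+\sum_{k>K}\|Du\|(C_k)$, which is what inner regularity requires. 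So inner regularity is a consequence of super- and subadditivity combined, not of restriction alone.
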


For any $u,v\in L^1_{\loc}(\Om)$,
it is straightforward to show that
\begin{equation}\label{eq:BV functions form vector space}
\Vert D(u+v)\Vert(\Om)\le \Vert Du\Vert(\Om)+\Vert Dv\Vert(\Om).
\end{equation}

The BV norm is defined by
\[
\Vert u\Vert_{\BV(\Om)}:=\Vert u\Vert_{L^1(\Om)}+\Vert Du\Vert(\Om).
\]

We will assume throughout the paper that $X$ supports a $(1,1)$-Poincar\'e inequality,
meaning that there exist constants $C_P>0$ and $\lambda \ge 1$ such that for every
ball $B(x,r)$, every $u\in L^1_{\loc}(X)$,
and every upper gradient $g$ of $u$,
we have
\[
\vint{B(x,r)}|u-u_{B(x,r)}|\, d\mu 
\le C_P r\vint{B(x,\lambda r)}g\,d\mu,
\]
where 
\[
u_{B(x,r)}:=\vint{B(x,r)}u\,d\mu :=\frac 1{\mu(B(x,r))}\int_{B(x,r)}u\,d\mu.
\]
Recall the exponent $s>1$ from \eqref{eq:homogeneous dimension}.
The $(1,1)$-Poincar\'e inequality implies the so-called Sobolev-Poincar\'e inequality, see e.g. \cite[Theorem 4.21]{BB}, and by applying the latter to approximating locally Lipschitz functions in the definition of the total variation, we get the following Sobolev-Poincar\'e inequality for $\BV$ functions. For every ball $B(x,r)$ and every $u\in L^1_{\loc}(X)$, we have
\begin{equation}\label{eq:sobolev poincare inequality}
\left(\,\vint{B(x,r)}|u-u_{B(x,r)}|^{s/(s-1)}\,d\mu\right)^{(s-1)/s}
\le C_{SP}r\frac{\Vert Du\Vert (B(x,2\lambda r))}{\mu(B(x,2\lambda r))},
\end{equation}
where $C_{SP}=C_{SP}(C_d,C_P,\lambda)\ge 1$ is a constant.

For an open set $\Omega\subset X$ and a $\mu$-measurable set $E\subset X$ with
$\Vert D\ch_E\Vert(\Om)<\infty$, we know that for any Borel set $A\subset\Omega$,
\begin{equation}\label{eq:def of theta}
\Vert D\ch_E\Vert(A)=\int_{\partial^{*}E\cap A}\theta_E\,d\mathcal H,
\end{equation}
where
$\theta_E\colon X\to [\alpha,C_d]$ with $\alpha=\alpha(C_d,C_P,\lambda)>0$, see \cite[Theorem 5.3]{A1} 
and \cite[Theorem 4.6]{AMP}.
The following coarea formula is given in \cite[Proposition 4.2]{M}:
if $\Omega\subset X$ is an open set and $u\in L^1_{\loc}(\Omega)$, then
\begin{equation}\label{eq:coarea}
\|Du\|(\Omega)=\int_{-\infty}^{\infty}P(\{u>t\},\Omega)\,dt.
\end{equation}

The lower and upper approximate limits of a function $u$ on $\Om$ are defined respectively by
\[
u^{\wedge}(x):
=\sup\left\{t\in\R:\,\lim_{r\to 0}\frac{\mu(\{u<t\}\cap B(x,r))}{\mu(B(x,r))}=0\right\}
\]
and
\[
u^{\vee}(x):
=\inf\left\{t\in\R:\,\lim_{r\to 0}\frac{\mu(\{u>t\}\cap B(x,r))}{\mu(B(x,r))}=0\right\}.
\]
Then the jump set $S_u$ is defined as the set of points $x\in\Om$
for which $u^{\wedge}(x)<u^{\vee}(x)$.
It is straightforward to check that $u^{\wedge}$ and $u^{\vee}$ are Borel functions.

By \cite[Theorem 5.3]{AMP}, the variation measure of a $\BV$ function
can be decomposed into the absolutely continuous and singular part, and the latter
into the Cantor and jump part, as follows. Given an open set 
$\Omega\subset X$ and $u\in\BV(\Omega)$, we have for any Borel set $A\subset \Om$
\begin{equation}\label{eq:variation measure decomposition}
\begin{split}
\Vert Du\Vert(A) &=\Vert Du\Vert^a(A)+\Vert Du\Vert^s(A)\\
&=\Vert Du\Vert^a(A)+\Vert Du\Vert^c(A)+\Vert Du\Vert^j(A)\\
&=\int_{A}a\,d\mu+\Vert Du\Vert^c(A)+\int_{A\cap S_u}\int_{u^{\wedge}(x)}^{u^{\vee}(x)}\theta_{\{u>t\}}(x)\,dt\,d\mathcal H(x),
\end{split}
\end{equation}
where $a\in L^1(\Omega)$ is the density of the absolutely continuous part
$\Vert Du\Vert^a(A)$ of $\Vert Du\Vert(A)$
and the functions $\theta_{\{u>t\}}\in [\alpha,C_d]$ 
are as in~\eqref{eq:def of theta}.

Next, we introduce the Haj\l asz-Sobolev space.
{Let $0< p<\infty$.}
Given a $\mu$-measurable set
$K\subset X$, we define $M^{1,p}(K)$ to be the set of all functions $u\in L^p(K)$
for which there exists $0\leq g\in L^p(K)$ and a set $A\subset K$ of measure
zero such that for all $x, y\in K\setminus A$ we have the estimate
\begin{equation}\label{H-gradient}
|u(x)-u(y)|\leq d(x, y)(g(x)+g(y)).
\end{equation}
The corresponding norm
(when $p\ge 1$)
is obtained by setting
$$\|u\|_{M^{1,p}(K)}=\|u\|_{L^p(K)}+\inf \|g\|_{L^p(K)},$$
where the infimum is taken over all admissible functions $g$ in \eqref{H-gradient}.
We refer to \cite{H96,H03} for more properties of the Haj\l asz-Sobolev
space $M^{1,p}$.
The space $M^{1,p}_{c_H}(K)$  is defined exactly in the same manner as the
space $M^{1,p}(K)$ except for one difference: in the definition of
$M^{1,p}_{c_H}(K)$, the condition \eqref{H-gradient} is assumed to
hold only for points $x, y\in K\setminus A$ that satisfy the condition
\begin{equation}\label{H-gradient-ball}
d(x, y)\leq  c_H\cdot \min\{d(x, X\setminus K), d(y, X\setminus K)\},
\end{equation}
where $0<c_H<1$ is a constant.

We give the following definitions for the boundary trace, or trace for short, of a function defined on an open set $\Om$.

\begin{definition}\label{trace}
Let $\Om\subset X$ be an open set and let $u$ be a $\mu$-measurable function on $\Om$. A number $Tu(x)$ is the trace of $u$ at $x\in \bx$ if we have
\begin{equation}\label{def-trace}
\lim_{r\rarrow 0^+} \vint{B(x, r)\cap \Om} |u- Tu(x)|\, d\mu=0.
\end{equation}
We say that $u$ has a trace $Tu$ in $\partial \Om$
if $Tu(x)$ exists for $\mathcal H$-almost every $x\in \bx$.
\end{definition}

Moreover, we give the following definitions for the trace space of a Banach space defined on an open set $\Om$.

\begin{definition}\label{trace-space}
Let $\Om$ be an open set and let $\mathbb X(\Om)$ be a Banach function space on $\Om$. A Banach space $\mathbb Y(\bx, \mathcal H)$ on $\bx$ is the trace space of $\mathbb X(\Om)$ if the trace operator $u\mapsto Tu$ defined  in Definition \ref{trace} is a bounded linear surjective operator from $\mathbb  X(\Om)$ to $\mathbb Y(\bx, \mathcal H)$.
\end{definition}

\begin{definition}\label{trace-space-measure}
Let $\Omega$ be an open set and $\widetilde {\mathcal H}$ be a measure on $\bx$.
Let $\mathbb X(\Omega)$ be a Banach function space on $\Omega$. A Banach space
$\mathbb Y(\bx, \widetilde{\mathcal H})$ on $\bx$ is the trace space of
$\mathbb X(\Omega)$ with respect to $\widetilde{\mathcal H}$, if the trace operator
$u\mapsto Tu$ defined in Definition \ref{trace} by replacing $\mathcal H$
by $\widetilde{\mathcal H}$ is a bounded linear surjective operator from
$\mathbb X(\Om)$ to $\mathbb Y(\bx, \widetilde{\mathcal H})$.
\end{definition}

\section{Traces of $N^{1,1}(\Om)$ and $\BV(\Om)$}

In this section, let $\Om\subset X$ be an arbitrary nonempty
open set. Recall the definition
of the number $s>1$ from \eqref{eq:homogeneous dimension}.

\begin{lemma}\label{lem:approximating liploc functions in L with Sobolev exponent}
Let $u\in L^1_{\loc}(\Om)$ with $\Vert Du\Vert(\Om)<\infty$. Then there exists
a sequence $(u_i)\subset \liploc(\Om)$ such that $u_i\to u$ in $L_{\loc}^{s/(s-1)}(\Om)$
and
\[
\Vert Du\Vert(\Om)=\lim_{i\to\infty}\int_{\Om}g_{u_i}\,d\mu.
\]
\end{lemma}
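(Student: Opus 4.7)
The plan is to construct the approximating sequence explicitly via a Whitney-type discrete convolution, rather than appealing to an abstract density result, because the explicit construction provides the pointwise maximal-function control needed to reach the borderline Sobolev exponent $s/(s-1)$. First I observe that $u\in L^{s/(s-1)}_{\loc}(\Om)$: on every ball $B=B(x,r)$ with $\overline{B(x,2\lambda r)}\subset\Om$, the BV Sobolev--Poincar\'e inequality \eqref{eq:sobolev poincare inequality} bounds $\bigl(\vint{B}|u-u_B|^{s/(s-1)}\,d\mu\bigr)^{(s-1)/s}$ by a finite quantity involving $\|Du\|(B(x,2\lambda r))$, which combined with $u\in L^1_{\loc}(\Om)$ gives the claimed local integrability.

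Next, for each $i$ I take a bounded-overlap cover of $\Om$ by balls $B_k^i=B(x_k^i,r_k^i)$ with $r_k^i\approx\min\{\dist(x_k^i,X\setminus\Om),1/i\}$ and a subordinate Lipschitz partition of unity $\{\phi_k^i\}$ satisfying $\Lip\phi_k^i\lesssim 1/r_k^i$, and set
\[
u_i(x):=\sum_k u_{B_k^i}\,\phi_k^i(x),
\]
which lies in $\liploc(\Om)$. Standard estimates for such discrete convolutions, combined with the $(1,1)$-Poincar\'e inequality, give $u_i\to u$ in $L^1_{\loc}(\Om)$; bounding $g_{u_i}$ on each ball by oscillations of $u$ on neighboring balls yields $\limsup_i\int_{\Om}g_{u_i}\,d\mu\le\|Du\|(\Om)$, while the reverse inequality is the lower semicontinuity built into the definition \eqref{eq:total variation}.

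Finally, I upgrade $L^1_{\loc}$-convergence to $L^{s/(s-1)}_{\loc}$-convergence by dominated convergence. Because each $B_k^i$ with $x\in 2B_k^i$ is contained in $B(x,3r_k^i)$ and has comparable measure to it by doubling, and because $\sum_k\phi_k^i\equiv 1$,
\[
|u_i(x)|\le\sum_k\phi_k^i(x)\vint{B_k^i}|u|\,d\mu\le C\,\M u(x),
\]
where $\M u$ is the Hardy--Littlewood maximal function. Since $u\in L^{s/(s-1)}_{\loc}(\Om)$ and $s/(s-1)>1$, the maximal theorem supplies the integrable majorant $\M u\in L^{s/(s-1)}_{\loc}(\Om)$; and at every Lebesgue point $x$ of $u$ the averages $u_{B_k^i}$ converge to $u(x)$ uniformly over the boundedly many $k$ with $x\in 2B_k^i$, so $u_i\to u$ $\mu$-a.e.\ and dominated convergence concludes the argument.

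The main obstacle is precisely this last step: an abstract density argument would yield only a uniform $L^{s/(s-1)}_{\loc}$-bound on $(u_i)$, which combined with $L^1_{\loc}$-convergence gives only $L^q_{\loc}$-convergence for $q<s/(s-1)$ by interpolation, so the endpoint is borderline for any Vitali-type argument. The pointwise maximal-function majorant that comes for free from the Whitney construction is the essential ingredient that lets dominated convergence succeed at the endpoint exponent $s/(s-1)$.
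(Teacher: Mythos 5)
Your overall structure is sensible and the dominated-convergence endgame (pointwise majorant $\M u$, a.e.\ convergence at Lebesgue points, $\M u\in L^{s/(s-1)}_{\loc}$ by the maximal theorem) is a clean way to get $L^{s/(s-1)}_{\loc}$-convergence. But the construction has a genuine gap at the energy-convergence step, which is the whole point of the lemma.

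You assert that ``bounding $g_{u_i}$ on each ball by oscillations of $u$ on neighboring balls yields $\limsup_i\int_{\Om}g_{u_i}\,d\mu\le\|Du\|(\Om)$.'' This is not true for discrete convolutions in the metric setting. The standard pointwise bound for a discrete convolution $u_W$ on a Whitney-type ball $B_j$ has the form
\[
g_{u_W}\lesssim\frac{1}{r_j}\vint{cB_j}|u-u_{cB_j}|\,d\mu\lesssim\frac{\Vert Du\Vert(c\lambda B_j)}{\mu(B_j)},
\]
and summing over a bounded-overlap family only gives $\int_\Om g_{u_W}\,d\mu\le C\,\Vert Du\Vert(\Om)$ with $C=C(C_d,C_P,\lambda,C_0)>1$. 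This constant does not improve to $1$ as the scale $1/i$ shrinks: it comes from the $(1,1)$-Poincar\'e inequality and the Leibniz/overlap estimates, not from the radius. Unlike Euclidean mollification — where $\nabla(u*\rho_\eps)=Du*\rho_\eps$ and Minkowski's integral inequality gives $\int|\nabla u_\eps|\le|Du|(\Om)$ exactly — a discrete convolution has no such exact relationship, and indeed $u_W$ need not even be monotone along the jump of a characteristic function. Combined with lower semicontinuity you would only get $\Vert Du\Vert(\Om)\le\liminf_i\int g_{u_i}\le\limsup_i\int g_{u_i}\le C\Vert Du\Vert(\Om)$, which does not establish the claimed limit. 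This exact energy convergence is essential downstream (it feeds Lemma~\ref{lem:weak* convergence} and the $\int g_v<\Vert Du\Vert(\Om)+\eps$ estimate in Theorem~\ref{thm:BV theorem 1 with more detail}), so the gap is fatal.

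The paper sidesteps this by \emph{not} constructing the approximants from scratch: it takes a sequence $(v_i)\subset\liploc(\Om)$ already satisfying $\int_\Om g_{v_i}\,d\mu\to\Vert Du\Vert(\Om)$ directly from the definition of the total variation~\eqref{eq:total variation}, then truncates at levels $M_j$ and runs a diagonal argument. Truncation is $1$-Lipschitz, so it does not increase the upper gradients (preserving the energy limsup), while the uniform bound $|(v_i)_{M_j}|,|u_{M_j}|\le M_j$ upgrades $L^1(\Om_j)$-convergence to $L^{s/(s-1)}(\Om_j)$-convergence via $|(v_{i})_{M_j}-u_{M_j}|^{s/(s-1)}\le(2M_j)^{1/(s-1)}|v_i-u|$. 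That truncation trick is the missing idea: it is what lets one keep the exact energy limit while gaining higher integrability, without ever facing the lossy constants of a discrete-convolution gradient estimate.
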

\begin{proof}
By the Sobolev-Poincar\'e inequality \eqref{eq:sobolev poincare inequality},
we have $u\in L^{s/(s-1)}_{\loc}(\Om)$.
Take open sets
$\Om_1\Subset \Om_2\Subset \ldots \Subset \Om=\bigcup_{j=1}^{\infty}\Om_j$.
Now $u\in L^{s/(s-1)}(\Om_j)$ for each $j\in\N$.
Define the truncations
\[
u_M:=\min\{M,\max\{-M,u\}\},\quad M>0.
\]
For each $j\in\N$ we find a number $M_j>0$ such that
$\Vert u_{M_j}-u\Vert_{L^{s/(s-1)}(\Om_j)}<1/j$.
From the definition of the total variation,
take a sequence $(v_i)\subset \liploc(\Om)$ such that $v_i\to u$ in $L_{\loc}^1(\Om)$
and
\[
\Vert Du\Vert(\Om)=\lim_{i\to\infty}\int_{\Om}g_{v_i}\,d\mu.
\]
Then also $(v_{i})_{M_j}\to u_{M_j}$ in $L^{s/(s-1)}(\Om_j)$ for all $j\in\N$.
Thus we can pick indices $i(j)\ge j$ such that
$\Vert (v_{i(j)})_{M_j}-u_{M_j}\Vert_{L^{s/(s-1)}(\Om_j)}<1/j$ for each $j\in\N$.
Defining $u_j:=(v_{i(j)})_{M_j}$, we now have
\[
\Vert u_j-u\Vert_{L^{s/(s-1)}(\Om_j)}<2/j\quad\textrm{for all }j\in\N
\]
and so $u_j\to u$ in $L^{s/(s-1)}_{\loc}(\Om)$.
Moreover, since truncation does not increase energy,
\[
\limsup_{j\to\infty}\int_{\Om}g_{u_j}\,d\mu\le \Vert Du\Vert(\Om).
\]
But by lower semicontinuity, also
$\Vert Du\Vert(\Om)\le \liminf_{j\to\infty}\int_{\Om}g_{u_j}\,d\mu$.
\end{proof}

We have the following standard fact; for a proof see e.g.
\cite[Proposition 3.8]{HKLL}.

\begin{lemma}\label{lem:weak* convergence}
Let $u\in L^1_{\loc}(\Om)$ with $\Vert Du\Vert(\Om)<\infty$ and let
$(u_i)\subset N^{1,1}_{\loc}(\Om)$ with $u_i\to u$ in $L^1_{\loc}(\Om)$ and
\[
\Vert Du\Vert(\Om)=\lim_{i\to\infty}\int_{\Om}g_{u_i}\,d\mu.
\]
Then we also have the weak* convergence
$g_{u_i}\,d\mu\overset{*}{\rightharpoonup}d\Vert Du\Vert$.
\end{lemma}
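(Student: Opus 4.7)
The plan is to combine the lower semicontinuity of the total variation with a complementation trick to control the measures $\mu_i:=g_{u_i}\,d\mu$ on ``nice'' open subsets of $\Om$, and then propagate this control to arbitrary test functions via a layer-cake argument. By assumption, $\mu_i(\Om)\to\|Du\|(\Om)<\infty$, so the total masses are uniformly bounded, and the desired weak$^*$ convergence amounts to showing $\int_\Om \varphi\,g_{u_i}\,d\mu\to\int_\Om\varphi\,d\|Du\|$ for every $\varphi\in C_c(\Om)$.

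First, I would record the lower-semicontinuity estimate: for any open set $V\subset\Om$, the definition \eqref{eq:total variation}, combined with the facts that $u_i\to u$ in $L^1_{\loc}(V)$ and that $g_{u_i}$ remains a $1$-weak upper gradient of $u_i$ in $V$, yields
\[
\|Du\|(V)\le\liminf_{i\to\infty}\mu_i(V).
\]
Applying this both to an open $V\Subset\Om$ and to the complementary open set $\Om\setminus\overline V$, and using the additivity $\mu_i(V)+\mu_i(\Om\setminus\overline V)\le\mu_i(\Om)\to\|Du\|(\Om)$, I would deduce the companion upper bound
\[
\limsup_{i\to\infty}\mu_i(V)\le\|Du\|(\Om)-\liminf_{i\to\infty}\mu_i(\Om\setminus\overline V)\le\|Du\|(\Om)-\|Du\|(\Om\setminus\overline V)=\|Du\|(\overline V).
\]
Combining, $\|Du\|(V)\le\liminf_i\mu_i(V)\le\limsup_i\mu_i(V)\le\|Du\|(\overline V)$, and in particular $\lim_i\mu_i(V)=\|Du\|(V)$ for every open $V\Subset\Om$ satisfying $\|Du\|(\partial V)=0$.

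The remaining step is to test against an arbitrary $\varphi\in C_c(\Om)$. Splitting into positive and negative parts, I may assume $0\le\varphi\le M$ and use the layer-cake representation $\int_\Om\varphi\,d\mu_i=\int_0^M\mu_i(\{\varphi>t\})\,dt$. Since $\|Du\|$ is finite on $\mathrm{supp}\,\varphi$, the set of $t\in(0,M)$ with $\|Du\|(\{\varphi=t\})>0$ is at most countable; for every other $t$, continuity of $\varphi$ gives $\partial\{\varphi>t\}\subset\{\varphi=t\}$, so the previous step applies to $V_t:=\{\varphi>t\}$ and yields $\mu_i(V_t)\to\|Du\|(V_t)$. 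Bounding $\mu_i(V_t)\le\mu_i(U)$ for a fixed open $U\Subset\Om$ containing $\mathrm{supp}\,\varphi$ with $\|Du\|(\partial U)=0$, and noting that $\mu_i(U)\to\|Du\|(U)<\infty$, dominated convergence on $(0,M)$ finishes the argument. The main difficulty I anticipate is ensuring that open sets with $\|Du\|$-null boundary are rich enough to reach every $\varphi\in C_c(\Om)$; the layer-cake step together with the countability of bad levels is precisely the standard device that handles this.
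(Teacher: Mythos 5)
Your argument is correct. Note that the paper itself does not prove this lemma but cites \cite[Proposition 3.8]{HKLL} for it; the route you take---lower semicontinuity of the total variation on open subsets, the complementation trick $\limsup_i\mu_i(V)\le\|Du\|(\Om)-\liminf_i\mu_i(\Om\setminus\overline V)\le\|Du\|(\overline V)$ to pin down $\lim_i\mu_i(V)$ on sets $V\Subset\Om$ with $\|Du\|(\partial V)=0$, and then the layer-cake representation with the countability of bad levels to pass to arbitrary $\varphi\in C_c(\Om)$---is precisely the standard way such weak$^*$ convergence statements for variation measures are established, and every step you state checks out (in particular, the additivity of the Borel measure $\|Du\|$ from Proposition \ref{thm:variation measure property} justifies $\|Du\|(\Om)-\|Du\|(\Om\setminus\overline V)=\|Du\|(\overline V)$, and the uniform bound $\sup_i\mu_i(\Om)<\infty$ makes the final bounded-convergence step legitimate).
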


\begin{lemma}\label{lem:choosing approximating function}
	Let $\Om_1\Subset \Om_2\Subset \ldots \Subset\bigcup_{j=1}^{\infty}\Om_j=\Om$ be open sets, let $\Om_0:=\emptyset$, and
	let $\eta_j\in \Lip_c(\Om_{j})$ such that $0\le \eta_j\le 1$ on $X$ and
	$\eta_j=1$ in $\Om_{j-1}$ for each $j\in\N$, with $\eta_1\equiv 0$.
	Let $1\le q<\infty$.
	Moreover, let $u\in L^1_{\loc}(\Om)$ with
	$\Vert Du\Vert(\Om)<\infty$, and for each $j\in\N$
	let $(u_{j,i})\subset N^{1,1}(\Om_j)$ such that $u_{j,i}- u\to 0$
	in $L^{q}(\Om_j)$ and
	\[
	\lim_{i\to\infty}\int_{\Om_j} g_{u_{j,i}}\,d\mu=\Vert Du\Vert(\Om_j),
	\]
	where each $g_{u_{j,i}}$ is the minimal $1$-weak upper gradient of
	$u_{j,i}$ in $\Om_j$.
	Finally, let $\delta_j>0$ for each $j\in\N$, and let $\eps>0$. Then
	for each $j\in\N$ we find an index $i(j)$ such that
	letting $u_{j}:=u_{j,i(j)}$ and
	\[
	v:=\sum_{j=2}^{\infty}(\eta_j-\eta_{j-1})u_{j},
	\]
	we have
	\[
	\max\{\Vert v-u\Vert_{L^{1}(\Om_{j}\setminus \Om_{j-1})},
	\Vert v-u\Vert_{L^{q}(\Om_{j}\setminus \Om_{j-1})}\}<\delta_j
	\quad\textrm{for all }j\in\N,
	\]
	and $\int_\Om g_v\,d\mu<\Vert Du\Vert(\Om)+\eps$.
\end{lemma}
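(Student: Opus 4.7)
The plan is to treat $v$ piecewise on the annular shells $A_j := \Om_j\setminus\Om_{j-1}$, and to rearrange the upper-gradient integral via a clever partition of unity so that no double-counting occurs. First observe that $\eta_k\equiv 1$ on $\Om_{k-1}\supset\supp\eta_{k-1}$, so the functions $\varphi_k := \eta_k - \eta_{k-1}$ take values in $[0,1]$, are supported in $\overline{\Om_k}\setminus\Om_{k-2}$, and satisfy $\sum_{k\ge 2}\varphi_k\equiv 1$ on $\Om$. On $A_j$ only $\varphi_j = \eta_j$ and $\varphi_{j+1} = 1-\eta_j$ are nonzero, so
\[
v|_{A_j} = \eta_j\,u_j + (1-\eta_j)\,u_{j+1},
\]
whence $|v-u|\le |u_j-u|+|u_{j+1}-u|$ on $A_j$. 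The $L^1$ and $L^q$ bounds then reduce to choosing each $i(j)$ large enough that $\|u_{j,i(j)}-u\|_{L^1(\Om_j)\cap L^q(\Om_j)}$ is a small fraction of $\min(\delta_{j-1},\delta_j)$; this is possible since $u_{j,i}-u\to 0$ in $L^q(\Om_j)$ and $\Om_j$ has finite measure.

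For the upper gradient, differentiating $v\circ\gamma$ along a $1$-almost every curve $\gamma$ in $A_j$ (using that $u_j,u_{j+1}$ are absolutely continuous along such $\gamma$ and $\eta_j$ is Lipschitz) shows that
\[
G_j := \eta_j\,g_{u_j} + (1-\eta_j)\,g_{u_{j+1}} + \Lip(\eta_j)\,|u_j-u_{j+1}|
\]
is a $1$-weak upper gradient of $v$ on $A_j$. A short check shows $v\circ\gamma$ is continuous across $\partial\Om_{j-1}$ and $\partial\Om_j$ (the one-sided limits are $u_j(\gamma(t^*))$ and $u_{j+1}(\gamma(t^*))$ respectively, since $\eta_j\to 1$ on $\overline{\Om_{j-1}}$ and $\eta_j\to 0$ on $\partial\Om_j$), so these local upper gradients paste into a global $1$-weak upper gradient $G := \sum_j G_j\,\ch_{A_j}$ of $v$ on $\Om$. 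The remainder terms $\Lip(\eta_j)\|u_j-u_{j+1}\|_{L^1(A_j)}$ can be forced below $2^{-j}\eps/2$ using the $L^1$-control already imposed.

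The crucial step is the rearrangement. Reindexing the second sum by $k=j+1$, the leading pieces collect as
\[
\sum_{j\ge 2}\int_{A_j}\eta_j\,g_{u_j}\,d\mu + \sum_{j\ge 2}\int_{A_j}(1-\eta_j)\,g_{u_{j+1}}\,d\mu = \sum_{k\ge 2}\int_\Om B_k\,g_{u_k}\,d\mu,
\]
where $B_k := \eta_k\,\ch_{A_k} + (1-\eta_{k-1})\,\ch_{A_{k-1}}$. Using $\eta_k\to 0$ on $\partial\Om_k$, $\eta_{k-1}\to 0$ on $\partial\Om_{k-1}$, $\eta_k\equiv 1$ on $\Om_{k-1}$, and $\eta_{k-1}\equiv 1$ on $\Om_{k-2}$, one verifies that $B_k$ is continuous on $\Om$, compactly supported in $\Om_k$, takes values in $[0,1]$, and that $\{B_k\}$ is a partition of unity on $\Om$: at any $x\in A_m$ only $B_m(x) = \eta_m(x)$ and $B_{m+1}(x) = 1-\eta_m(x)$ are nonzero, summing to $1$.

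Finally, Lemma~\ref{lem:weak* convergence} applied on $\Om_k$ yields $g_{u_{k,i}}\,d\mu\overset{*}{\rightharpoonup}d\|Du\|$ as $i\to\infty$; testing against the continuous, compactly supported function $B_k$, I pick $i(k)$ so large that $\int B_k\,g_{u_k}\,d\mu \le \int B_k\,d\|Du\| + 2^{-k}\eps/2$, while simultaneously meeting the earlier $L^1$/$L^q$ and remainder constraints. Summing over $k$ and using $\sum_k B_k\equiv 1$ gives $\sum_k\int B_k\,d\|Du\| = \|Du\|(\Om)$, so $\int_\Om g_v\,d\mu \le \|Du\|(\Om) + \eps$. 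The hardest part is the reorganization into $\{B_k\}$: a naive bound on $g_v$ via $\eta_j,1-\eta_j\le 1$ would produce a spurious factor of $2$ on $\|Du\|(\Om)$, and it is precisely this partition of unity that cancels the overlap.
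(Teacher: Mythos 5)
Your proposal is correct and takes essentially the same route as the paper's proof: your partition-of-unity functions $B_k$ are identically equal to $\eta_k-\eta_{k-1}$ (on $A_k$ one has $\eta_{k-1}=0$, on $A_{k-1}$ one has $\eta_k=1$, elsewhere both agree), so your weak* test against $B_k$ and the identity $\sum_k B_k\equiv 1$ reproduce precisely the paper's choice of $i(j)$ forcing $\int_{\Om_j}(\eta_j-\eta_{j-1})g_{u_j}\,d\mu<\int_{\Om_j}(\eta_j-\eta_{j-1})\,d\Vert Du\Vert+2^{-j}\eps$ and its subsequent use of $\sum_j(\eta_j-\eta_{j-1})\le 1$. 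The only place where the paper is more careful is the global upper-gradient bound: rather than pasting the shell-wise gradients $G_j$ across $\partial\Om_{j-1}$ via continuity of $v\circ\gamma$ as you sketch, the paper builds $v$ as the recursive limit $v_{i+1}=\eta_i v_i+(1-\eta_i)u_{i+1}$ and proves by induction via the Leibniz rule, entirely inside the open set $\Om_{i+1}$, that $g_{v_{i+1}}\le\sum_{j=2}^i g_{\eta_j}|u_j-u_{j+1}|+\sum_{j=2}^i(\eta_j-\eta_{j-1})g_{u_j}+(1-\eta_i)g_{u_{i+1}}$, which avoids the curve-gluing altogether; your pasting can certainly be justified (for instance by locality of minimal $1$-weak upper gradients, since $v=v_{i+1}$ on $\Om_i$), but as written it is the one step a careful reader would want spelled out.
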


Note that neither $u$ nor the functions $u_{j,i}$ need to be in $L^q(\Om_j)$,
only in $L^1(\Om_j)$, but still we can have
$u_{j,i}- u\to 0$ in $L^{q}(\Om_j)$ for each $j\in\N$.
We can also see that in $\Omega_j\setminus \Omega_{j-1}$, the function
$v$ can be written as the finite sum
(let $\eta_0\equiv 0$)
	\begin{equation}\label{finite sum}
	\sum_{i=2}^{\infty}(\eta_i-\eta_{i-1})u_{i}
	=(\eta_j-\eta_{j-1})u_j+(\eta_{j+1}-\eta_j)u_{j+1}
	=\eta_j u_j+(1-\eta_{j})u_{j+1}.
	\end{equation}

\begin{proof}
	By Lemma \ref{lem:weak* convergence}, for each $j\in\N$ we have
	$g_{u_{j,i}}\,d\mu\overset{*}{\rightharpoonup} d\Vert Du\Vert$
	as $i\to\infty$ in $\Om_j$.
	For each $j\in\N$, let $L_j>0$ denote a Lipschitz constant of $\eta_j$;
	we can take this to be an increasing sequence.
	Set $\delta_0:=1$, $L_0:=1$.
	Letting $u_j:=u_{j,i(j)}$ for suitable indices $i(j)\in\N$, we get
	\begin{equation}\label{eq:choosing the L1 closeness of uis}
	\max\{\Vert u_{j}-u\Vert_{L^{1}(\Om_{j})},\Vert u_{j}-u\Vert_{L^{q}(\Om_{j})}\}
	<\min\{\delta_{j-1},\delta_{j},2^{-j-1}\eps/L_{j}\}/2
	\end{equation}
	for all $j\in\N$, and
	\begin{equation}\label{eq:choosing the energy of uis}
	\int_{\Om_j}(\eta_{j}-\eta_{j-1}) g_{u_j}\,d\mu
	<\int_{\Om_j}(\eta_{j}-\eta_{j-1})\,d\Vert Du\Vert+2^{-j}\eps
	\end{equation}
	for all $j=2,3,\ldots$.
	We get for all $j\in\N$
	\begin{align*}
	\Vert v-u\Vert_{L^{q}(\Om_{j}\setminus \Om_{j-1})}
	&=\Vert \sum_{i=2}^{\infty}(\eta_i-\eta_{i-1})u_{i}-u\Vert_{L^{q}(\Om_{j}\setminus \Om_{j-1})}\\
	&{\stackrel{\eqref{finite sum}}{=}}\Vert \eta_{j}u_{j}+(1-\eta_{j})u_{j+1}-u\Vert_{L^{q}(\Om_{j}\setminus \Om_{j-1})}\\
	&=\Vert \eta_{j}u_{j}+(1-\eta_{j})u_{j+1}
		-\eta_{j}u-(1-\eta_{j})u\Vert_{L^{q}(\Om_{j}\setminus \Om_{j-1})}\\
	&\le \Vert u_{j}-u\Vert_{L^{q}(\Om_{j}\setminus \Om_{j-1})}
	+\Vert u_{j+1}-u\Vert_{L^{q}(\Om_{j}\setminus \Om_{j-1})}\\
	&<\delta_j
	\end{align*}
	by \eqref{eq:choosing the L1 closeness of uis} as desired,
	and similarly for the $L^1$-norm.
	Let $v_2:=u_2$ in $\Om_2$, and recursively
	$v_{i+1}:=\eta_{i} v_i+(1-\eta_{i})u_{i+1}$ in $\Om_{i+1}$.
	We see that $v=\lim_{i\to\infty}v_i$ (at every point in $\Om$).
	By the proof of the Leibniz rule in \cite[Lemma 2.18]{BB},
	the minimal $1$-weak upper gradient
	of $v_3$ in $\Om_3$ satisfies
	\[
	g_{v_3}\le g_{\eta_2}|u_2-u_3|+\eta_2 g_{u_2}+(1-\eta_2)g_{u_3}.
	\]
	Inductively, we get for $i=3,4,\ldots$
	\[
	g_{v_i}\le \sum_{j=2}^{i-1} g_{\eta_{j}}|u_j-u_{j+1}|+
	\sum_{j=2}^{i-1}(\eta_{j}-\eta_{j-1}) g_{u_j}+(1-\eta_{i-1})g_{u_{i}}\quad \textrm{in }\Om_i;
	\]
	to prove this, assume that it holds for the index $i$. Then we have by applying a Leibniz rule as above, and noting that $g_{\eta_i}$ can be nonzero only in
	$\Om_{i}\setminus \Om_{i-1}$ (see \cite[Corollary 2.21]{BB}),
	where $v_i=u_i$,
	\begin{align*}
g_{v_{i+1}}
&\le g_{\eta_{i}}|v_i-u_{i+1}|+\eta_{i}
g_{v_i}+(1-\eta_{i})g_{u_{i+1}}\\
&= g_{\eta_{i}}|u_i-u_{i+1}|+\eta_{i}
	g_{v_i}+(1-\eta_{i})g_{u_{i+1}}\\
&\stackrel{\text
		{\tiny{
		Induction}}}{\le} g_{\eta_{i}}|u_{i}-u_{i+1}|+\sum_{j=2}^{i-1} g_{\eta_{j}}|u_j-u_{j+1}|\\
&\qquad
+\sum_{j=2}^{i-1}(\eta_j-\eta_{j-1})g_{u_{j}}+(\eta_{i}-\eta_{i-1})g_{u_{i}}
+(1-\eta_{i})g_{u_{i+1}}\\
&=\sum_{j=2}^{i} g_{\eta_{j}}|u_j-u_{j+1}|+
\sum_{j=2}^{i}(\eta_{j}-\eta_{j-1}) g_{u_j}+(1-\eta_{i})g_{u_{i+1}}
\quad \textrm{in }\Om_{i+1}.
\end{align*}
This completes the induction.
In each $\Omega_{j}\setminus\Omega_{j-1}$,
	by \eqref{finite sum} we have 
	$$v=\eta_{j}u_{j}+(1-\eta_{j})u_{j+1}
	=\eta_{j}v_{j}+(1-\eta_{j})u_{j+1}=v_{j+1},$$
	and so in fact $v=v_{j+1}$ in $\Om_j$, for each $j\in\N$.
Thus the minimal $1$-weak upper gradient of $v$ in $\Om_{i}$ satisfies
	\[
	g_{v}=g_{v_{i+1}}
	\le \sum_{j=2}^{\infty} g_{\eta_j}|u_j-u_{j+1}|+
	\sum_{j=2}^{\infty}(\eta_{j}-\eta_{j-1}) g_{u_j}.
	\]
	Thus
	\begin{align*}
	\int_{\Om_{i}} g_v\,d\mu
	&\le \sum_{j=2}^{\infty}\int_{\Om_j} g_{\eta_j}|u_j-u_{j+1}|\,d\mu+
	\sum_{j=2}^{\infty}\int_{\Om_j} (\eta_{j}-\eta_{j-1}) g_{u_j}\,d\mu\\
	&\le \sum_{j=2}^{\infty}
	L_j\Vert u_j-u_{j+1}\Vert_{L^1(\Om_{j}\setminus \Om_{j-1})}
	+\sum_{j=2}^{\infty}\left(\int_{\Om_j}(\eta_{j}-\eta_{j-1})\,d\Vert Du\Vert+2^{-j}\eps\right)\ \ \textrm{by }\eqref{eq:choosing the energy of uis}\\
	&\le \eps/2
	+\Vert Du\Vert(\Om)+\eps/2\ \ \textrm{by }\eqref{eq:choosing the L1 closeness of uis},\eqref{eq:choosing the energy of uis}\\
	&= \Vert Du\Vert(\Om)+\eps.
	\end{align*}
	Note that $g_v$ does not depend on $i$, see
	\cite[Lemma 2.23]{BB}, and so it is well
	defined on $\Om$.
	Since $g_v$ is the minimal $1$-weak upper gradient of $v$ in
	each $\Om_i$, it is clearly
	also (the minimal) $1$-weak upper gradient of $v$ in $\Om$.
	Then by Lebesgue's monotone convergence theorem,
	\[
	\int_{\Om} g_v\,d\mu\le \Vert Du\Vert(\Om)+\eps.
	\]
\end{proof}

Theorem \ref{thm:BV theorem 1} of the introduction follows from the following theorem.

\begin{theorem}\label{thm:BV theorem 1 with more detail}
Let $u\in L^1_{\loc}(\Om)$ with $\Vert Du\Vert(\Om)<\infty$ and let $\eps>0$.
Then there exists $v\in N_{\loc}^{1,1}(\Om)\cap \liploc(\Om)$ such that
$\Vert v-u\Vert_{L^1(\Om)}<\eps$,
$\Vert v-u\Vert_{L^{s/(s-1)}(\Om)}<\eps$, $\int_{\Om}g_v\,d\mu<\Vert Du\Vert(\Om)+\eps$, and
\[
\vint{B(x,r)\cap \Om}|v-u|^{s/(s-1)}\,d\mu\to 0\quad \textrm{as }r\to 0^+
\]
uniformly for all $x\in\partial\Om$.
\end{theorem}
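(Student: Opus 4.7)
The plan is to apply Lemma~\ref{lem:choosing approximating function} with exponent $q=s/(s-1)$, fed by the locally Lipschitz approximations from Lemma~\ref{lem:approximating liploc functions in L with Sobolev exponent}, on an exhaustion $\{\Om_j\}$ chosen to track the distance to $\partial\Om$.

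First I would fix $x_0\in\Om$ and take $\Om_j\Subset\Om_{j+1}$ open with $\bigcup_j\Om_j=\Om$ and $\Om_j\subset\Om(2^{-j})$, where $\Om(r):=\{x\in\Om:\dist(x,X\setminus\Om)>r\}$; a concrete choice is $\Om_j:=\Om(2^{-j})\cap B(x_0,2^j)$, with parameters perturbed slightly so that $\|Du\|(\partial\Om_j)=0$ (possible since $\|Du\|$ is Radon, so at most countably many values of the distance threshold carry positive $\|Du\|$-mass). The key geometric fact is then
\[
B(x,r)\cap\Om\subset\Om\setminus\Om_j\qquad\text{for all }x\in\partial\Om\text{ and }r<2^{-j},
\]
since every $y\in B(x,r)\cap\Om$ satisfies $\dist(y,X\setminus\Om)\le d(y,x)<r<2^{-j}$.

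Next, Lemma~\ref{lem:approximating liploc functions in L with Sobolev exponent} applied to $\Om$ yields $(u_i)\subset\liploc(\Om)$ with $u_i\to u$ in $L^{s/(s-1)}_{\loc}(\Om)$ and $\int_\Om g_{u_i}\,d\mu\to\|Du\|(\Om)$. Lemma~\ref{lem:weak* convergence} upgrades this to $g_{u_i}\,d\mu\overset{*}{\rightharpoonup}d\|Du\|$; since $\|Du\|(\partial\Om_j)=0$ and $\overline{\Om_j}\Subset\Om$, one gets $u_i\to u$ in $L^{s/(s-1)}(\Om_j)$ together with $\int_{\Om_j}g_{u_i}\,d\mu\to\|Du\|(\Om_j)$ (and the minimal $1$-weak upper gradient of $u_i$ in $\Om_j$ shares the same limit by lower semicontinuity). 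Choosing Lipschitz cutoffs $\eta_j\in\Lip_c(\Om_j)$ with $\eta_j\equiv1$ on $\Om_{j-1}$ and $\eta_1\equiv0$, setting $u_{j,i}:=u_i|_{\Om_j}$, and invoking Lemma~\ref{lem:choosing approximating function} with $q=s/(s-1)$ and parameters $\delta_j>0$ produces
\[
v=\sum_{j\ge2}(\eta_j-\eta_{j-1})u_j\;\in\;\liploc(\Om)\cap N^{1,1}_{\loc}(\Om)
\]
(local Lipschitzness being immediate from the layerwise finite-sum expression \eqref{finite sum}) satisfying $\int_\Om g_v\,d\mu<\|Du\|(\Om)+\eps$ and the layer bounds $\max\bigl\{\|v-u\|_{L^1(\Om_j\setminus\Om_{j-1})},\|v-u\|_{L^{s/(s-1)}(\Om_j\setminus\Om_{j-1})}\bigr\}<\delta_j$.

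Choosing the $\delta_j$ so that $\sum_j\delta_j<\eps$ and $\sum_j\delta_j^{s/(s-1)}<\eps^{s/(s-1)}$ gives the desired global $L^1$ and $L^{s/(s-1)}$ bounds, and the containment above yields, for $x\in\partial\Om$ and $r<2^{-j}$,
\[
\int_{B(x,r)\cap\Om}|v-u|^{s/(s-1)}\,d\mu\;\le\;\sum_{i>j}\int_{\Om_i\setminus\Om_{i-1}}|v-u|^{s/(s-1)}\,d\mu\;\le\;\sum_{i>j}\delta_i^{s/(s-1)},
\]
a deterministic tail depending only on $j$. The main obstacle is turning this integral bound into the claimed \emph{average} going to zero \emph{uniformly} in $x\in\partial\Om$: dividing by $\mu(B(x,r)\cap\Om)$ is delicate because no uniform lower bound on this quantity is available in the absence of a measure density assumption. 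The fix is to impose an additional shrinkage on the $\delta_j$ so that $\sum_{i>j}\delta_i^{s/(s-1)}$ beats the worst-case decay of $\mu(B(x,r)\cap\Om)$ as $r\to 0^+$ and $x$ ranges over $\partial\Om$; since the $\delta_j$ are free parameters entering only through one-sided inequalities, this extra constraint can be accommodated without disturbing the previous bounds.
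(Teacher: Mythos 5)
Your outline follows the paper's proof up to the last step, but the last step contains a genuine gap, and it is precisely the step that makes the theorem nontrivial.

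You correctly set up the exhaustion $\Om_j=\Om(2^{-j})\cap B(x_0,2^j)$ (perturbed so $\|Du\|(\partial\Om_j)=0$), feed the locally Lipschitz approximations of Lemma~\ref{lem:approximating liploc functions in L with Sobolev exponent} through Lemma~\ref{lem:weak* convergence} and Lemma~\ref{lem:choosing approximating function}, and arrive at the layer estimate $\int_{B(x,r)\cap\Om}|v-u|^{s/(s-1)}\,d\mu\le\sum_{i>j}\delta_i^{s/(s-1)}$ for $r<2^{-j}$. You then flag the real difficulty — dividing by $\mu(B(x,r)\cap\Om)$ without a measure density assumption — but you dismiss it with the claim that the $\delta_j$ can be shrunk further to ``beat the worst-case decay of $\mu(B(x,r)\cap\Om)$ as $r\to 0^+$ and $x$ ranges over $\partial\Om$.'' This is not a valid fix. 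If $\partial\Om$ is unbounded (which the theorem allows), there is in general \emph{no} positive lower bound on $\inf_{x\in\partial\Om}\mu(B(x,r)\cap\Om)$ even for fixed $r>0$, so there is no ``worst-case decay'' to beat: the infimum over all of $\partial\Om$ may already be zero at every scale, and no sequence $\delta_j$ chosen in advance of $x$ can compensate.

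The paper's resolution requires two ingredients you have not supplied. First, for each $j$ one restricts the infimum to a \emph{compact} piece of the boundary, $\beta_j:=\inf\{\mu(B(x,2^{-j})\cap\Om):x\in\partial\Om\cap\overline B(x_0,d_{j+2}^{-1})\}$, which is strictly positive because $x\mapsto\mu(B(x,r)\cap\Om)$ is lower semicontinuous and strictly positive and the set is compact (properness of the space). The $\delta_j$ are then chosen in terms of these $\beta_j$, roughly $\delta_j\lesssim 2^{-j}\beta_j^{s/(s-1)}$. Second, and crucially, your containment $B(x,r)\cap\Om\subset\Om\setminus\Om_j$ for $r<2^{-j}$ only exploits the distance-to-boundary half of the definition of $\Om_j$; you never use the $B(x_0,2^j)$ cutoff. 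The paper instead observes that for a fixed $x\in\partial\Om$, choosing $l$ with $x\in B(x_0,d_{l+2}^{-1})$, the ball $B(x,2^{-k+1})$ only meets layers $\Om_j\setminus\Om_{j-1}$ with $j\ge\max\{k,l\}$; this is where the $B(x_0,\cdot)$ truncation of $\Om_j$ earns its keep. Since for $j\ge l$ the compact set defining $\beta_j$ contains $x$, one gets $\mu(B(x,2^{-k})\cap\Om)\ge\mu(B(x,2^{-j})\cap\Om)\ge\beta_j$, and the ratio is bounded by a geometric series in $j$ independently of $x$. Without this position-indexed compactness argument, the final uniform estimate does not follow, so as written your proposal does not prove the theorem.
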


Note that if $u\in\BV(\Om)$ as in the formulation of Theorem \ref{thm:BV theorem 1},
then $v\in L^1(\Om)$ and so $v\in N^{1,1}(\Om)$.

\begin{proof}
Fix $x_0\in X$.
Define $\Om_0:=\emptyset$ and pick numbers $d_j\in (2^{-j},2^{-j+1})$,
$j\in\N$, such that the sets
\[
\Om_{j}:=\{x\in\Om:\,d(x,X\setminus \Om)>d_j\}\cap B(x_0,d_j^{-1})
\]
satisfy $\Vert Du\Vert(\partial \Om_j)=0$.
For each $j\in\N$, take $\eta_j\in \Lip_c(\Om_{j})$ such that $0\le \eta_j\le 1$
on $X$ and $\eta_j=1$ in $\Om_{j-1}$, and $\eta_1\equiv0$.
Note that for a fixed $r>0$, the function
\[
x\mapsto \mu(B(x,r)\cap \Om),\quad x\in\partial\Om,
\]
is lower semicontinuous and strictly positive.
Since $\partial\Om\cap \overline{B}(x_0,d_j^{-1})$ is compact
for every $j\in\N$, the numbers
\[
\beta_j:=\inf\{\mu(B(x,2^{-j})\cap \Om):
\,x\in\partial\Om\cap \overline{B}(x_0,d_{j+2}^{-1})\},\quad j\in\N,
\]
are strictly positive.
Set
\[
\delta_j:=2^{-j}\min\left\{\eps,\beta_j^{s/(s-1)}\right\}.
\]
By Lemma \ref{lem:approximating liploc functions in L with Sobolev exponent}
we find functions
$(u_i)\subset \liploc(\Om)$ such that $u_i\to u$ in $L^{s/(s-1)}_{\loc}(\Om)$
	and
	\[
	\lim_{i\to\infty}\int_\Om g_{u_i}\,d\mu=\Vert Du\Vert(\Om).
	\]
Then also $u_i\to u$ in $L^{s/(s-1)}(\Om_j)$ for every $j\in\N$, and
by Lemma \ref{lem:weak* convergence} and the fact that $\Vert Du\Vert(\partial\Om_j)=0$
we get
\[
\lim_{i\to\infty}\int_{\Om_j} g_{u_i}\,d\mu=\Vert Du\Vert(\Om_j).
\]
Then apply Lemma \ref{lem:choosing approximating function}
to obtain a function $v\in \liploc(\Om)$.
By the lemma, we have $\int_\Om g_v\,d\mu<\Vert Du\Vert(\Om)+\eps$
as desired,
and from the condition
\[
	\max\{\Vert v-u\Vert_{L^{1}(\Om_{j}\setminus \Om_{j-1})},
	\Vert v-u\Vert_{L^{s/(s-1)}(\Om_{j}\setminus \Om_{j-1})}\}<\delta_j
	\le 2^{-j}\eps
	\quad\textrm{for all }j\in\N
\]
we easily get $\Vert v-u\Vert_{L^{1}(\Om)}<\eps$ and
$\Vert v-u\Vert_{L^{s/(s-1)}(\Om)}<\eps$.
In particular, $v\in N_{\loc}^{1,1}(\Om)$ as desired.

Fix $x\in\partial\Om$. Choose the smallest $l\in\N$ such that
$x\in B(x_0,d_{l+2}^{-1})$.
Note that then $B(x,1)\cap B(x_0,d_{l-1}^{-1})=\emptyset$ (if $l\ge 2$)
and so for any $k\in\N$,
\[
B(x,2^{-k+1})\cap \Om
=B(x,2^{-k+1})\cap\Bigg(\bigcup_{j=\max\{k,l\}}^{\infty}(\Om_j\setminus \Om_{j-1})\Bigg).
\]
Now
\begin{align*}
&\frac{1}{\mu(B(x,2^{-k})\cap \Om)}\int_{B(x,2^{-k+1})\cap \Om}|v-u|^{s/(s-1)}\,d\mu\\
&\qquad\qquad=\frac{1}{\mu(B(x,2^{-k})\cap \Om)}
\sum_{j=\max\{k,l\}}^{\infty}\int_{B(x,2^{-k+1})
\cap \Om_{j}\setminus \Om_{j-1}}|v-u|^{s/(s-1)}\,d\mu\\
&\qquad\qquad\le\frac{1}{\mu(B(x,2^{-k})\cap \Om)}\sum_{j=\max\{k,l\}}^{\infty}\int_{
\Om_{j}\setminus \Om_{j-1}}|v-u|^{s/(s-1)}\,d\mu\\
&\qquad\qquad\le\frac{1}{\mu(B(x,2^{-k})\cap \Om)}\sum_{j=\max\{k,l\}}^{\infty}\delta_j^{(s-1)/s}\\
&\qquad\qquad\le\sum_{j=\max\{k,l\}}^{\infty}\frac{2^{-j}\beta_j}
{\mu(B(x,2^{-j})\cap \Om)}\\
&\qquad\qquad\le \sum_{j=\max\{k,l\}}^{\infty}2^{-j}\le 2^{-k+1}.
\end{align*}
Now it clearly follows that
\[
\vint{B(x,r)\cap \Om}|v-u|^{s/(s-1)}\,d\mu\to 0\quad \textrm{as }r\to 0^+
\]
uniformly for all $x\in\partial\Om$.
\end{proof}

We have the following approximation result for BV functions in the $L^q$-norm.

\begin{theorem}\label{thm:BV Lp approximation result}
Let $u\in L^1_{\loc}(\Om)$ with $\Vert Du\Vert(\Om)<\infty$ and let $1\le q<\infty$.
Then there exists a sequence $(u_i)\subset N^{1,1}_{\loc}(\Om)$ such that
$u_i-u \to 0$ in $L^1(\Om)\cap L^q(\Om)$ and
\[
\int_{\Om}g_{u_i}\,d\mu\to \Vert Du\Vert(\Om).
\]
\end{theorem}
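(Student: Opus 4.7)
The plan is to adapt the construction in the proof of Theorem \ref{thm:BV theorem 1 with more detail}, replacing the specific Sobolev exponent $s/(s-1)$ with the general $q$ via a further truncation argument. Fix an exhaustion $\emptyset = \Om_0 \Subset \Om_1 \Subset \Om_2 \Subset \cdots$ of $\Om$ with $\|Du\|(\partial \Om_j) = 0$, together with cutoffs $\eta_j \in \Lip_c(\Om_j)$ with $0 \le \eta_j \le 1$, $\eta_j = 1$ on $\Om_{j-1}$, and $\eta_1 \equiv 0$, as in Lemma \ref{lem:choosing approximating function}.

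The heart of the proof is to produce, for each $j \in \N$, sequences $u_{j,k} \in N^{1,1}(\Om_j)$ with $u_{j,k} - u \to 0$ in $L^q(\Om_j)$ and $\int_{\Om_j} g_{u_{j,k}}\,d\mu \to \|Du\|(\Om_j)$ as $k \to \infty$. I would start from the $\liploc(\Om)$ approximations $(\tilde v_i)$ of $u$ produced by Lemma \ref{lem:approximating liploc functions in L with Sobolev exponent}, which satisfy $\tilde v_i \to u$ in $L^{s/(s-1)}_{\loc}(\Om)$ and $\int_\Om g_{\tilde v_i}\,d\mu \to \|Du\|(\Om)$; by the assumption $\|Du\|(\partial \Om_j) = 0$ together with Lemma \ref{lem:weak* convergence} the energies also converge on $\Om_j$. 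To upgrade the convergence to $L^q(\Om_j)$, I truncate at a level $M_k \to \infty$ and set
\[
u_{j,k} := T_{M_k}(\tilde v_{i(k)}) + \bigl(u - T_{M_k}(u)\bigr),
\]
choosing $i(k)$ so that $T_{M_k}(\tilde v_{i(k)}) \to T_{M_k}(u)$ in $L^1(\Om_j)$ rapidly; since the difference is uniformly bounded by $2M_k$ on the finite-measure set $\Om_j$, the bounded convergence theorem upgrades this to $L^q$-convergence. Then $u_{j,k} - u = T_{M_k}(\tilde v_{i(k)}) - T_{M_k}(u) \to 0$ in $L^1(\Om_j) \cap L^q(\Om_j)$, and the energies remain controlled since truncation does not increase minimal upper gradients.

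The main obstacle is verifying $u_{j,k} \in N^{1,1}(\Om_j)$: the summand $u - T_{M_k}(u)$ lies in $\BV(\Om_j)$ but in general fails to be Newton-Sobolev, because jumps of $u$ reaching above level $M_k$ persist in the tail. To circumvent this I would first reduce to bounded $u$, where the tail vanishes once $M_k \ge \|u\|_{L^\infty(\Om_j)}$ and the construction is transparently Sobolev, and then pass to the unbounded BV case by a diagonal argument applied to the truncations $T_N u$. These satisfy $T_N u \to u$ in $L^1_{\loc}(\Om)$ and $\|D T_N u\|(\Om) \le \|Du\|(\Om)$, and lower semicontinuity of the total variation forces $\|D T_N u\|(\Om) \to \|Du\|(\Om)$ as $N \to \infty$.

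With the local sequences $u_{j,k}$ in hand, I apply Lemma \ref{lem:choosing approximating function} with parameters $\delta_j := 2^{-j}/n$ and $\eps := 1/n$ to patch them into functions $v_n \in N^{1,1}_{\loc}(\Om)$ satisfying $\|v_n - u\|_{L^1(\Om)} + \|v_n - u\|_{L^q(\Om)} \lesssim 1/n$ and $\int_\Om g_{v_n}\,d\mu < \|Du\|(\Om) + 1/n$. Lower semicontinuity of the total variation then yields the matching liminf bound $\liminf_n \int_\Om g_{v_n}\,d\mu \ge \|Du\|(\Om)$, so $(v_n)$ is the desired approximating sequence.
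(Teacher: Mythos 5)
There is a genuine gap in the reduction to bounded $u$. You correctly observe that $u_{j,k} := T_{M_k}(\tilde v_{i(k)}) + (u - T_{M_k}(u))$ fails to lie in $N^{1,1}(\Om_j)$, since the tail $u - T_{M_k}(u)$ retains the singular part of $Du$. But the proposed fix --- handle bounded $u$ first and then diagonalize over the truncations $T_N u$ --- does not close this gap, because $T_N u - u$ need not tend to $0$ in $L^q(\Om)$, and not even in $L^q(\Om_j)$ for a fixed $\Om_j \Subset \Om$. The hypothesis $u\in L^1_{\loc}(\Om)$, $\Vert Du\Vert(\Om)<\infty$ only gives $u\in L^{s/(s-1)}_{\loc}(\Om)$ via the Sobolev--Poincar\'e inequality; for $q>s/(s-1)$, a function such as $u(x)=|x-x_0|^{-\alpha}$ on a Euclidean disk, with $2/q\le\alpha<1$, is in $W^{1,1}$ but not in $L^q_{\loc}$, so $\int_{\Om_j}|T_N u - u|^q\,d\mu=\infty$ for every $N$ once $x_0\in\Om_j$. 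Any diagonal therefore approximates $T_{N_i}u$ rather than $u$, and the residual $T_{N_i}u-u$ is never $L^q$-small. Note also that you are importing the exhaustion and Lemma \ref{lem:choosing approximating function}, which is machinery the paper reserves for Theorem \ref{BV thm 2 with more detail}; Theorem \ref{thm:BV Lp approximation result} is the \emph{input} to that lemma, not something proved from it.

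The paper's proof uses a different decomposition that sidesteps the issue entirely: instead of truncating at one level, it slices $u_+$ into unit-height bands $u_k:=\min\{1,(u-k)_+\}$, approximates each band by $v_k\in N^{1,1}_{\loc}(\Om)$ with $0\le v_k\le 1$, and sets $v:=\sum_k v_k$. Because each difference $v_k-u_k$ is bounded by $1$, $L^q$-smallness of $v_k - u_k$ follows from $L^1$-smallness at no extra cost, regardless of whether $u$ itself is in $L^q$. The coarea formula guarantees $\sum_k \Vert Du_k\Vert(\Om)=\int_0^\infty P(\{u>t\},\Om)\,dt=\Vert Du_+\Vert(\Om)$, so the energies of the slices add up to the right total with no overcounting, and the same is done for $u_-$. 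This slicing (rather than truncation) is the essential idea you would need.
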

\begin{proof}
For each $k=0,1,\ldots$ define
the truncation of $u$ at levels $k$ and $k+1$
\[
u_k:=\min\{1,(u-k)_+\}.
\]
Then $u_k\in L_{\loc}^1(\Om)\cap L^{\infty}(\Om)$ for each $k=0,1,\ldots$ and
$u_+=\sum_{k=0}^{\infty}u_k$.
Also note that by the coarea formula \eqref{eq:coarea},
\[
\Vert Du_k\Vert(\Om)=\int_{-\infty}^{\infty}P(\{u_k>t\},\Om)\,dt
=\int_{k}^{k+1}P(\{u>t\},\Om)\,dt.
\]
For each $k=0,1,\ldots$, from the definition of the total variation we get a
sequence $(v_{k,i})\subset N_{\loc}^{1,1}(\Om)$
with $v_{k,i}\to u_k$ in $L_{\loc}^1(\Om)$ and
\[
\int_{\Om}g_{v_{k,i}}\,d\mu\to \Vert Du_k\Vert(\Om)\quad \textrm{as }i\to\infty.
\]
In the proof of Theorem \ref{thm:BV theorem 1 with more detail} we saw
that in fact we can get $v_{k,i}- u_k\to 0$ in $L^1(\Om)$. 
Since $0\le u_k\le 1$, by truncation we can assume that also
$0\le v_{k,i}\le 1$. Then also $v_{k,i}- u_k\to 0$ in $L^q(\Om)$.
Let $\eps>0$.
For a suitable choice of indices $i=i(k)$, for $v_k:=v_{k,i(k)}$
we have
$\Vert v_{k}-u_k\Vert_{L^1(\Om)}<2^{-k-2}\eps$,
$\Vert v_{k}-u_k\Vert_{L^q(\Om)}<2^{-k-2}\eps$, and
\[
\int_{\Om}g_{v_k}\,d\mu<\Vert Du_k\Vert(\Om)+2^{-k-1}\eps
=\int_{k}^{k+1}P(\{u>t\},\Om)\,dt+2^{-k-1}\eps.
\]
Then for $v:=\sum_{k=0}^{\infty}v_k$ we have
$\Vert v-u_+\Vert_{L^1(\Om)}<\eps/2$ and $\Vert v-u_+\Vert_{L^q(\Om)}<\eps/2$.
Moreover, using e.g. \cite[Lemma 1.52]{BB} we get $g_v\le \sum_{k=0}^{\infty}g_{v_k}$
and then
\begin{align*}
\int_{\Om}g_{v}\,d\mu
\le \sum_{k=0}^{\infty}\int_{\Om}g_{v_k}\,d\mu
&\le \sum_{k=0}^{\infty}\left(\int_{k}^{k+1}P(\{u>t\},\Om)\,dt+2^{-k-1}\eps\right)\\
&=\int_{0}^{\infty}P(\{u>t\},\Om)\,dt+\eps/2\\
&=\Vert Du_+\Vert(\Om)+\eps/2
\end{align*}
again by the coarea formula.
Similarly we find a function $w\in N_{\loc}^{1,1}(\Om)$ with
$\Vert w-u_-\Vert_{L^1(\Om)}<\eps/2$, $\Vert w-u_-\Vert_{L^q(\Om)}<\eps/2$, and
$\int_{\Om}g_{w}\,d\mu<\Vert Du_-\Vert(\Om)+\eps/2$.
Then for $h:=v-w$ we have $\Vert h-u\Vert_{L^1(\Om)}<\eps$,
$\Vert h-u\Vert_{L^q(\Om)}<\eps$, and
\[
\int_{\Om}g_{h}\,d\mu<\Vert Du_+\Vert(\Om)+\eps/2+\Vert Du_-\Vert(\Om)+
\eps/2= \Vert Du\Vert(\Om)+\eps
\]
using the coarea formula once more.
In this way we get the desired sequence.
\end{proof}

Theorem \ref{thm:BV theorem 2} of the introduction follows from the following theorem.
In Example \ref{ex:counterexample for L q} we will show that here we cannot take
$u$ to be continuous or even locally bounded in $\Om$.

\begin{theorem}\label{BV thm 2 with more detail}
Let $u\in L^1_{\loc}(\Om)$ with $\Vert Du\Vert(\Om)<\infty$,
let $1\le q<\infty$, and let $\eps>0$.
Then there
exists $v\in N_{\loc}^{1,1}(\Om)$ such that
$\Vert v-u\Vert_{L^1(\Om)}<\eps$,
$\Vert v-u\Vert_{L^q(\Om)}<\eps$, $\int_{\Om}g_v\,d\mu<\Vert Du\Vert(\Om)+\eps$,
and
\[
\vint{B(x,r)\cap \Om}|v-u|^{q}\,d\mu\to 0\quad \textrm{as }r\to 0^+
\]
uniformly for all $x\in\partial\Om$.
\end{theorem}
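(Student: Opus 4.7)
The plan is to mimic the proof of Theorem \ref{thm:BV theorem 1 with more detail} essentially line-by-line, replacing Lemma \ref{lem:approximating liploc functions in L with Sobolev exponent} (locally Lipschitz approximants converging in $L^{s/(s-1)}_{\loc}$) with Theorem \ref{thm:BV Lp approximation result} (Newton--Sobolev approximants converging in $L^q$). The price is that the output $v$ will only lie in $N^{1,1}_{\loc}(\Om)$ rather than $\liploc(\Om)$, which is unavoidable for general $q$ by the Example \ref{ex:counterexample for L q} referenced just before the statement, but in exchange the uniform boundary decay now holds with an arbitrary exponent $q\in[1,\infty)$.

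First I would fix $x_0\in X$, set $\Om_0:=\emptyset$, and choose $d_j\in(2^{-j},2^{-j+1})$ so that the open sets
\[
\Om_j:=\{x\in\Om:\,d(x,X\setminus\Om)>d_j\}\cap B(x_0,d_j^{-1})
\]
satisfy $\|Du\|(\partial\Om_j)=0$, which is possible because $\|Du\|$ is a Radon measure. I then pick cutoffs $\eta_j\in\Lip_c(\Om_j)$ with $0\le\eta_j\le 1$, $\eta_j=1$ on $\Om_{j-1}$, and $\eta_1\equiv 0$. The boundary infima
\[
\beta_j:=\inf\{\mu(B(x,2^{-j})\cap\Om):\,x\in\partial\Om\cap\overline{B}(x_0,d_{j+2}^{-1})\}
\]
are strictly positive by the lower semicontinuity of $x\mapsto\mu(B(x,2^{-j})\cap\Om)$ and compactness, and I set $\delta_j:=2^{-j}\min\{\eps,\beta_j^{1/q}\}$, where the exponent $1/q$ plays the role of $(s-1)/s$ in the earlier proof. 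Theorem \ref{thm:BV Lp approximation result} then provides $(u_i)\subset N^{1,1}_{\loc}(\Om)$ with $u_i-u\to 0$ in $L^1(\Om)\cap L^q(\Om)$ and $\int_\Om g_{u_i}\,d\mu\to\|Du\|(\Om)$; restricting to each $\Om_j$ and invoking Lemma \ref{lem:weak* convergence} together with $\|Du\|(\partial\Om_j)=0$ yields the energy matching $\int_{\Om_j}g_{u_i}\,d\mu\to\|Du\|(\Om_j)$. Feeding these sequences into Lemma \ref{lem:choosing approximating function} produces $v\in N^{1,1}_{\loc}(\Om)$ with $\int_\Om g_v\,d\mu<\|Du\|(\Om)+\eps$ and
\[
\max\{\|v-u\|_{L^1(\Om_j\setminus\Om_{j-1})},\,\|v-u\|_{L^q(\Om_j\setminus\Om_{j-1})}\}<\delta_j\le 2^{-j}\eps,
\]
so summing in $j$ delivers both global bounds $\|v-u\|_{L^1(\Om)}<\eps$ and $\|v-u\|_{L^q(\Om)}<\eps$.

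For the uniform boundary decay, given $x\in\partial\Om$ I would pick the smallest $l\in\N$ with $x\in B(x_0,d_{l+2}^{-1})$ and reproduce the telescoping calculation from the earlier proof. Since $B(x,2^{-k+1})\cap\Om\subset\bigcup_{j\ge\max\{k,l\}}(\Om_j\setminus\Om_{j-1})$ and $\mu(B(x,2^{-k})\cap\Om)\ge\mu(B(x,2^{-j})\cap\Om)\ge\beta_j$ for $j\ge\max\{k,l\}$,
\[
\frac{1}{\mu(B(x,2^{-k})\cap\Om)}\int_{B(x,2^{-k+1})\cap\Om}|v-u|^q\,d\mu\le\sum_{j=\max\{k,l\}}^{\infty}\frac{\delta_j^q}{\beta_j}\le\sum_{j=k}^{\infty}2^{-jq},
\]
and the right-hand side tends to $0$ as $k\to\infty$ independently of $x$; this controls the true average $\vint_{B(x,2^{-k+1})\cap\Om}|v-u|^q\,d\mu$ from above (using $\mu(B(x,2^{-k+1})\cap\Om)\ge\mu(B(x,2^{-k})\cap\Om)$) and yields uniform convergence on $\partial\Om$. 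The only real obstacle is producing Newton--Sobolev approximants that simultaneously realise the variation and converge in $L^q$, and this is already packaged in Theorem \ref{thm:BV Lp approximation result} via the truncation/coarea argument; beyond that, the proof is a cosmetic substitution of $q$ for $s/(s-1)$ in the argument of Theorem \ref{thm:BV theorem 1 with more detail}.
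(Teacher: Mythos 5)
Your proof is correct and follows essentially the same route as the paper: apply Theorem \ref{thm:BV Lp approximation result} in place of Lemma \ref{lem:approximating liploc functions in L with Sobolev exponent}, feed the resulting $N^{1,1}$-approximants into Lemma \ref{lem:choosing approximating function} with $\delta_j=2^{-j}\min\{\eps,\beta_j^{1/q}\}$, and repeat the telescoping estimate of Theorem \ref{thm:BV theorem 1 with more detail} with $q$ in place of $s/(s-1)$. The paper's one-line proof phrases the input as applying Theorem \ref{thm:BV Lp approximation result} on each $\Om_j$ separately, whereas you apply it once on $\Om$ and then invoke Lemma \ref{lem:weak* convergence} together with $\Vert Du\Vert(\partial\Om_j)=0$ to localize the energies; these are interchangeable, and in fact your version mirrors the detailed proof of Theorem \ref{thm:BV theorem 1 with more detail} more closely.
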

\begin{proof}
The proof is essentially the same as for
Theorem \ref{thm:BV theorem 1 with more detail}; the difference is that
here we apply Theorem \ref{thm:BV Lp approximation result}
to find sequences $(u_{j,i})_i\subset N^{1,1}(\Om_j)$, $j\in\N$,
such that $\Vert u_{j,i}-u\Vert_{L^q(\Om_j)}\to 0$ and
$\lim_{i\to\infty}\int_{\Om_j} g_{u_{j,i}}\,d\mu=\Vert Du\Vert(\Om_j)$
as $i\to\infty$.
\end{proof}

We say that $w\in\SBV(\Om)$ if $w\in\BV(\Om)$ and $\Vert Dw\Vert^c(\Om)=0$
(recall the decomposition \eqref{eq:variation measure decomposition}).
Recall also that the jump set $S_u$ is the set of points $x\in\Om$
for which $u^{\wedge}(x)<u^{\vee}(x)$.
Denote $\Om(r):=\{x\in \Om:\,\dist(x,X\setminus \Om)>r\}$.
We have the following approximation result
for $\BV$ functions by $\SBV$ functions.
\begin{theorem}\label{thm:approximation of BV by SBV}
Let $u\in\BV(\Om)$ and let $\eps>0$.
Then there exists $w\in\SBV(\Om)$ such that
$\Vert w-u\Vert_{L^1(\Om)}<\eps$,
$\Vert w-u\Vert_{L^{\infty}(\Om)}<\eps$,
$\Vert Dw\Vert(\Om)<\Vert Du\Vert(\Om)+\eps$,
$\mathcal H(S_w\setminus S_u)=0$,
and
\[
\lim_{r\to 0^+}\Vert w-u\Vert_{L^{\infty}(\Om\setminus \Om(r))}=0.
\]
\end{theorem}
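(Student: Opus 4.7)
Plan. Following the templates of Theorems~\ref{thm:BV theorem 1 with more detail} and \ref{BV thm 2 with more detail}, the overall strategy is to build $w$ via the shell-patching scheme of Lemma~\ref{lem:choosing approximating function}, but with $\SBV$ approximants in place of $N^{1,1}$ ones. Fix exhausting open sets $\Om_1\Subset\Om_2\Subset\cdots\Subset\Om$ with $\Vert Du\Vert(\partial\Om_j)=0$ and with distance parameters $d_j\downarrow 0$ so that $\Om\setminus\Om_j\subset\Om\setminus\Om(d_j)$, and Lipschitz cutoffs $(\eta_j)$ as in Lemma~\ref{lem:choosing approximating function}. The goal is to patch local approximants $(w_{j,i})_i\subset\SBV(\Om_j)$ into $w=\sum_{j\geq 2}(\eta_j-\eta_{j-1})w_{j,i(j)}$, with indices $i(j)$ selected to enforce annulus-wise tolerances
\[
\Vert w-u\Vert_{L^1(\Om_j\setminus\Om_{j-1})}+\Vert w-u\Vert_{L^\infty(\Om_j\setminus\Om_{j-1})}<\delta_j\downarrow 0,
\]
which delivers the global $L^1$ and $L^\infty$ bounds together with the uniform decay $\Vert w-u\Vert_{L^\infty(\Om\setminus\Om(r))}\to 0$. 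In place of the upper-gradient Leibniz rule from Lemma~\ref{lem:choosing approximating function}, one uses the $\BV$-subadditivity \eqref{eq:BV functions form vector space} for products with the Lipschitz cutoffs together with a telescoping argument to get $\Vert Dw\Vert(\Om)\le\Vert Du\Vert(\Om)+\eps$, while the inclusion $S_w\subset\bigcup_j S_{w_{j,i(j)}}$ propagates $\mathcal{H}(S_w\setminus S_u)=0$ from the local level.

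The core step is the local $L^\infty$-approximation: for each $j$, produce $(w_{j,i})_i\subset\SBV(\Om_j)$ with $w_{j,i}\to u$ in $L^1(\Om_j)\cap L^\infty(\Om_j)$, $\Vert Dw_{j,i}\Vert(\Om_j)\to\Vert Du\Vert(\Om_j)$, and $\mathcal{H}(S_{w_{j,i}}\setminus S_u)=0$. A natural candidate is the step-function
\[
w_{j,i}=\sum_{l\in\Z}(t_{l+1}^{(i)}-t_l^{(i)})\,\chi_{\{u>t_l^{(i)}\}}\big|_{\Om_j}
\]
for a $(1/i)$-fine partition $(t_l^{(i)})\subset\R$: pointwise $|w_{j,i}-u|\le 1/i$, and $w_{j,i}\in\SBV(\Om_j)$ as a locally finite combination of characteristic functions of finite-perimeter sets (finite perimeter for a.e.\ $t$ by \eqref{eq:coarea}). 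The variation identity $\Vert Dw_{j,i}\Vert(\Om_j)=\sum_l(t_{l+1}^{(i)}-t_l^{(i)})P(\{u>t_l^{(i)}\},\Om_j)$, obtained from a second application of \eqref{eq:coarea} to $w_{j,i}$ itself, is a Riemann sum for $\int_\R P(\{u>t\},\Om_j)\,dt=\Vert Du\Vert(\Om_j)$, and a Fubini/shift-averaging argument over the phase in $(0,1/i)$ gives the required convergence.

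The main obstacle is the jump-set constraint $\mathcal{H}(S_{w_{j,i}}\setminus S_u)=0$. For $x\in\Om_j\setminus S_u$, approximate continuity of $u$ at $x$ forces any membership $x\in\partial^*\{u>t_l\}$ to satisfy $u(x)=t_l$, so $S_{w_{j,i}}\setminus S_u\subset\bigcup_l\{x\in\Om_j\setminus S_u:u(x)=t_l\}$. Using \eqref{eq:def of theta} and \eqref{eq:coarea} one obtains $\int_\R\mathcal{H}(\partial^*\{u>t\}\setminus S_u)\,dt\le\Vert Du\Vert(\Om_j)/\alpha$, which forces only finiteness of the integrand for a.e.\ $t$ rather than vanishing; the example $u(x)=x$ on $(0,1)\subset\R$, where every level $t\in(0,1)$ has $\mathcal{H}(\{u=t\}_{\mathrm{ac}})>0$, shows that no $\delta$-fine partition of \emph{good} levels exists in general. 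Overcoming this requires a more delicate composite construction: first apply Theorem~\ref{thm:BV theorem 1 with more detail} on each $\Om_j$ to produce a locally Lipschitz $v_{j,i}\in\liploc(\Om_j)$ absorbing the absolutely continuous and Cantor contributions of $\Vert Du\Vert$, then add a jump-preserving modification concentrated on $S_u$ so that the resulting $w_{j,i}$ carries the jump part of $u$ exactly on $S_u$ while remaining locally Lipschitz off $S_u$. The careful execution of this combined regularization-plus-jump-preservation construction is the main technical hurdle.
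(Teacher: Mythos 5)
The paper does not give a self-contained proof of this theorem: the proof reads ``This is given in \cite[Corollary 5.15]{L-Appr}; for the above limit see \cite[Eq.~(3.7), (3.10)]{L-Appr}.'' So there is no internal argument to compare against, and your sketch has to be judged on its own terms.

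Your shell-patching framework and your diagnosis of the obstruction are both correct: the level-set step function $w_{j,i}=\sum_l(t_{l+1}-t_l)\chi_{\{u>t_l\}}$ gives $L^\infty$-closeness and the right variation bound via the coarea formula, but its jump set $\bigcup_l\partial^*\{u>t_l\}$ generically has positive $\mathcal H$-measure off $S_u$, and your one-dimensional example $u(x)=x$ makes this concrete. Since the theorem asks for $\mathcal H(S_w\setminus S_u)=0$ exactly, not merely $\mathcal H(S_w)<\eps$ as in Theorem~\ref{thm:BV theorem 3}, refining the partition does not help. The genuine gap is that the fix you propose at the end is not a construction: the decomposition \eqref{eq:variation measure decomposition} is a decomposition of the measure $\Vert Du\Vert$, not of the function $u$, so there is no well-defined operation of ``absorbing the a.c.\ and Cantor contributions'' into a locally Lipschitz $v$ and then ``adding back the jump concentrated on $S_u$.'' Moreover, a locally Lipschitz $v$ that is $L^1$-close to $u$ cannot in general be $L^\infty$-close (Example~\ref{ex:counterexample for L infinity}), so if the $L^\infty$ bound is to come from a Lipschitz base plus a jump correction, the correction must itself do all the $L^\infty$ work, which is exactly the original difficulty in disguise. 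Closing this gap requires a genuinely different local construction — in the cited reference \cite{L-Appr} it proceeds via unit-layer truncations combined with mollification adapted to quasiopen super-level sets, so as to smooth out the Cantor part while preserving the original jump set up to an $\mathcal H$-null set — and there is no short substitute within the machinery developed in the present paper, which is presumably why the authors cite the result rather than reprove it.
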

\begin{proof}
This is given in \cite[Corollary 5.15]{L-Appr}; for the above limit
see \cite[Eq. (3.7), (3.10)]{L-Appr}.
\end{proof}

The following approximation result for $\BV$ functions
by means of functions with a jump set of finite Hausdorff measure is given
as part of \cite[Theorem 5.3]{L-2Appr}.

\begin{theorem}\label{thm:approximation}
Let $u\in\BV(\Om)$ and let $\eps,\delta>0$. Then
we find $w\in \BV(\Om)$ such that $\Vert w-u\Vert_{L^1(\Om)}<\eps$,
\[
\Vert D(w-u)\Vert(\Om)
\le 2\Vert Du\Vert(\{0<u^{\vee}-u^{\wedge}< \delta\})+\eps,
\]
$\Vert w-u\Vert_{L^{\infty}(\Om)}\le 10\delta$, and
$\mathcal H(S_{w}\setminus \{u^{\vee}-u^{\wedge}\ge \delta\})=0$.
\end{theorem}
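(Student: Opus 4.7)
My approach combines a coarea-based grid-quantization of $u$ with a subsequent localized smoothing to eliminate residual small jumps while preserving big jumps.

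Writing $u = u_+ - u_-$, I reduce to the case $u \ge 0$. For each shift $s \in [0,\delta)$, define the grid-quantized function
\[
w_s := \sum_{k=0}^{\infty} \delta\,\chi_{\{u > k\delta + s\}},
\]
which satisfies $|w_s - u| \le 2\delta$ pointwise. The jump set $S_{w_s}$ consists of those $x$ for which some grid level $k\delta + s$ lies in $(u^{\wedge}(x), u^{\vee}(x))$. At a big-jump point of $u$ (where $u^{\vee} - u^{\wedge} \ge \delta$) the interval always contains at least one such grid level, so $x \in S_{w_s}$ for every shift. At a small-jump point, the interval has length less than $\delta$ and so contains at most one grid level; hence $x \in S_{w_s}$ for only a fraction $(u^{\vee}(x)-u^{\wedge}(x))/\delta$ of the shifts $s$.

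By the coarea formula \eqref{eq:coarea} applied to each $\chi_{\{u>k\delta+s\}}$ and a Fubini argument in $s$, averaging over $s \in [0, \delta)$ bounds the jump contribution of $w_s$ on the small-jump set $A := \{0 < u^{\vee} - u^{\wedge} < \delta\}$ by $\Vert Du\Vert(A)$. A suitable shift $s_0$ then gives a $w_{s_0} \in \BV(\Om)$ that captures all big jumps of $u$, has controlled residual small-jump variation on $A$, and introduces no unwanted jumps away from $S_u$. To remove the residual small jumps, I would cover them by a tight open neighborhood $U$ and replace $w_{s_0}|_U$ by a locally Lipschitz interpolation via a patching argument in the spirit of Lemma \ref{lem:choosing approximating function}.

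For verification, the $L^{\infty}$ bound $\Vert w - u\Vert_{L^{\infty}(\Om)} \le 10\delta$ follows because the grid quantization contributes at most $2\delta$ and the smoothing of residual jumps of individual size at most $\delta$ can be done with bounded amplification (the constant $10$ being generous). The variation estimate $\Vert D(w-u)\Vert(\Om) \le 2\Vert Du\Vert(A) + \eps$ follows from the triangle inequality $\Vert D(w-u)\Vert \le \Vert Dw\Vert(U) + \Vert Du\Vert(U)$ together with the estimates above, the factor $2$ reflecting the two separate contributions (grid-quantization and smoothing), each controlled by $\Vert Du\Vert(A)$. The main obstacle is executing the smoothing step in the metric setting without a canonical mollifier: one must combine a Whitney-type cover of $U$ with the Lipschitz-approximation machinery of Lemma \ref{lem:approximating liploc functions in L with Sobolev exponent} and Lemma \ref{lem:choosing approximating function}, tracking constants carefully to arrive at the sharp $L^{\infty}$ bound $10\delta$ and the factor $2$ on the right-hand side.
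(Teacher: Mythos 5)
The paper does not prove Theorem \ref{thm:approximation}; it is cited verbatim from \cite[Theorem 5.3]{L-2Appr}. There is therefore no internal proof to compare against, but the title and framework of that reference indicate that the intended argument mollifies $u$ itself, via discrete convolutions on a (quasi-)open set away from the big-jump set $\{u^{\vee}-u^{\wedge}\ge\delta\}$, exploiting the fact that at points with small or no jump the local oscillation of $u$ is below $\delta$, so a discrete convolution only perturbs $u$ by $O(\delta)$. Your grid-quantization approach is genuinely different.

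There is, however, a real gap in your smoothing step, and it is the crux of the theorem. For any admissible shift $s_0$, the set $S_{w_{s_0}}\setminus\{u^{\vee}-u^{\wedge}\ge\delta\}$ has \emph{positive} codimension-$1$ Hausdorff measure, typically of order $\Vert Du\Vert(\Om)/\delta$: every grid-level crossing of $u$, including crossings occurring at $\mathcal H$-a.e.\ point where $u$ is approximately continuous or has a sub-$\delta$ jump, produces a jump of height $\delta$ in $w_{s_0}$. The theorem requires the residual jump set of the final $w$ to be $\mathcal H$-null, so these jumps must be eliminated entirely, not merely covered. Enclosing them in an open set $U$ and replacing $w_{s_0}|_U$ by a locally Lipschitz interpolation does not achieve this: since $w_{s_0}$ takes values in $\delta\Z$ and oscillates by $\delta$ arbitrarily close to $\partial U$, no locally Lipschitz function on $U$ can extend continuously to match $w_{s_0}$ on $\partial U$, so the patched function acquires fresh jumps along $\partial U$ whose $\mathcal H$-measure and contribution to $\Vert D(w-u)\Vert(\Om)$ are uncontrolled. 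Your averaging argument over $s$ correctly bounds the \emph{total variation} of $w_s$ created on the small-jump set, but it does not and cannot shrink the $\mathcal H$-\emph{measure} of the residual jump set, which is the much stronger thing the last condition demands. The underlying obstruction is that quantization permanently destroys the absolutely continuous and Cantor structure of $u$ at scale $\delta$, replacing it everywhere by jump structure, and there is no local patching mechanism that rebuilds it while respecting the $L^{\infty}$ and variation bounds.
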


We apply this theorem first to obtain the following proposition.

\begin{proposition}\label{prop:boundary values of SBVf}
Let $u\in\BV(\Om)$ and let $\eps>0$. Then
we find $v\in \BV(\Om)$ such that $\Vert v-u\Vert_{\BV(\Om)}<\eps$,
$\Vert v-u\Vert_{L^{\infty}(\Om)}<\eps$, $\mathcal H(S_v)<\infty$, and
\[
\lim_{r\to 0^+}\Vert v-u\Vert_{L^{\infty}(\Om\setminus \Om(r))}=0.
\]
\end{proposition}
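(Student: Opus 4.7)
The plan is to construct $v$ by gluing together BV approximations of $u$ produced by Theorem~\ref{thm:approximation} with progressively smaller $\delta$-parameters on a sequence of shells approaching $\partial\Om$, assembled via the telescoping cutoff construction of Lemma~\ref{lem:choosing approximating function}. Two basic convergences drive the construction: $\Vert Du\Vert(\Om\setminus\Om(r))\to 0$ as $r\to 0^+$ (since $\Vert Du\Vert$ is a finite Radon measure on $\Om$ and $\Om\setminus\Om(r)$ shrinks to the empty set), and $\Vert Du\Vert(\{0<u^\vee-u^\wedge<\delta\})\to 0$ as $\delta\to 0^+$, the latter holding because by~\eqref{eq:variation measure decomposition} the absolutely continuous and Cantor parts are concentrated on $\{u^\vee=u^\wedge\}$ while the jump part is finite.

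Fix $x_0\in\Om$. First I would pick $\delta_j\searrow 0$ small enough that $10\delta_j\le 2^{-j-1}\eps$ and $\Vert Du\Vert(\{0<u^\vee-u^\wedge<\delta_j\})\le 2^{-j}\eps$. Then I would choose $r_j\searrow 0$ and $R_j\nearrow\infty$ so that $\Om_j:=\Om(r_j)\cap B(x_0,R_j)$ is a nested exhaustion of $\Om$ by compactly contained sets with $\Vert Du\Vert(\Om\setminus\Om_{j-1})\le 2^{-j}\alpha\delta_{j+1}$, where $\alpha$ is the constant from~\eqref{eq:def of theta}. Take Lipschitz cutoffs $\eta_j$ with $\eta_j=1$ on $\Om_{j-1}$, $\supp\eta_j\subset\Om_j$, $\eta_1\equiv 0$, and Lipschitz constants $L_j$, and apply Theorem~\ref{thm:approximation} with parameters $(\delta_j,\eps_j)$, where $\eps_j:=2^{-j}\eps/(1+L_j+L_{j-1})$, to obtain $w_j\in\BV(\Om)$. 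Finally set
\[
v:=u+\sum_{j=2}^{\infty}(\eta_j-\eta_{j-1})(w_j-u),
\]
which, as in the proof of Lemma~\ref{lem:choosing approximating function}, reduces on each shell $\Om_j\setminus\Om_{j-1}$ to the convex combination $v=\eta_jw_j+(1-\eta_j)w_{j+1}$, and to $v=w_2$ on $\Om_1$.

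The $L^\infty$ bound and boundary convergence then follow immediately from $|v-u|\le 10\delta_j$ on the shell $\Om_j\setminus\Om_{j-1}$ together with $\delta_j\searrow 0$, while the $L^1$- and variation-parts of $\Vert v-u\Vert_{\BV(\Om)}$ are controlled by summing Leibniz bounds of the form
\[
\Vert D((\eta_j-\eta_{j-1})(w_j-u))\Vert(\Om)\le (L_j+L_{j-1})\Vert w_j-u\Vert_{L^1(\Om)}+\Vert D(w_j-u)\Vert(\Om),
\]
both of whose terms are of order $2^{-j}\eps$ by the choices above. The main obstacle will be the finiteness $\mathcal H(S_v)<\infty$. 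Since each $\eta_j$ is Lipschitz and hence continuous, the jumps of $v$ in the shell $\Om_j\setminus\Om_{j-1}$ lie in $(S_{w_j}\cup S_{w_{j+1}})\cap(\Om_j\setminus\Om_{j-1})$, which by the last conclusion of Theorem~\ref{thm:approximation} is $\mathcal H$-contained in $\{u^\vee-u^\wedge\ge\delta_{j+1}\}$. Using $\theta\ge\alpha$ from~\eqref{eq:def of theta} to convert jump mass into $\mathcal H$-mass, I would conclude
\[
\mathcal H(S_v\cap(\Om_j\setminus\Om_{j-1}))\le \frac{\Vert Du\Vert(\Om\setminus\Om_{j-1})}{\alpha\delta_{j+1}}\le 2^{-j},
\]
which sums to a finite bound (with the contribution from $\Om_1$ trivially finite since $v=w_2$ there). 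This is the delicate step: the tension between requiring $\delta_j\to 0$ (for the boundary convergence) and keeping the codimension-one masses of the $S_{w_j}$ controlled is resolved precisely because the exhaustion $\Om_j$ is chosen so that $\Vert Du\Vert$ decays on the outer collars faster than $\delta_{j+1}$.
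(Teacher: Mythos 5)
Your proposal is correct and follows essentially the same route as the paper: iterate Theorem~\ref{thm:approximation} with $\delta_j\searrow 0$, glue via the telescoping cutoffs $\eta_j-\eta_{j-1}$, choose the shells $\Om_j$ \emph{after} fixing the $\delta_j$ so that the tail jump mass is small, and control $\|D(v-u)\|$ by a Leibniz rule. The only variation is cosmetic: the paper controls $\mathcal H\big((\Om_j\setminus\Om_{j-2})\cap\{u^\vee-u^\wedge\ge\delta_j\}\big)$ directly, while you convert $\mathcal H$-mass of the jump set into $\|Du\|$-mass via $\theta\ge\alpha$ in the jump part of \eqref{eq:variation measure decomposition} and then make $\|Du\|(\Om\setminus\Om_{j-1})\le 2^{-j}\alpha\delta_{j+1}$ small — equivalent in effect, and correct.
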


\begin{proof}
Take numbers $\delta_j\searrow 0$, $0<\delta_j<\eps/20$, such that
\begin{equation}\label{eq:sum of delta j jump sets}
\sum_{j=2}^{\infty}\Vert Du\Vert(\{0<u^{\vee}-u^{\wedge}< \delta_j\})
<\frac{\eps}{4}.
\end{equation}
Note that by the decomposition \eqref{eq:variation measure decomposition},
$\mathcal H(\{u^{\vee}-u^{\wedge}>t\})<\infty$ for all $t>0$.
Thus we can take a strictly decreasing sequence of numbers $r_j>0$ so
that the sets $\Om_j:=\Om(r_j)$ satisfy
(let $\Om_0:=\emptyset$)
\[
\mathcal H((\Om_j\setminus \Om_{j-2})
\cap \{u^{\vee}-u^{\wedge}\ge\delta_j\})<2^{-j}\eps
\]
for all $j=3,4,\ldots$.
Then
\begin{equation}\label{eq:choice of set Omega j}
\sum_{j=2}^{\infty}\mathcal H((\Om_{j}\setminus \Om_{j-2})
\cap \{u^{\vee}-u^{\wedge}\ge\delta_j\})
<\mathcal H(\{u^{\vee}-u^{\wedge}\ge\delta_2\})+\eps.
\end{equation}
Also choose functions
$\eta_j\in \Lip(X)$ supported in $\Om_j$, $j\in\N$,
such that $0\le \eta_j\le 1$
on $X$ and $\eta_j=1$ in $\Om_{j-1}$, with $\eta_1\equiv 0$.
For each $j\in\N$, apply Theorem \ref{thm:approximation} to find a function
$v_j\in \BV(\Om)$
satisfying
\begin{equation}\label{eq:vj u and test functions}
\max\{\Vert{g_{\eta_j}+g_{\eta_{j-1}}\Vert_ {L^{\infty}(\Om)}},1\}
\cdot \Vert v_j-u\Vert_{L^1(\Om)}<2^{-j-1}\eps
\end{equation}
as well as
\begin{equation}\label{eq:BV distance of vj and u}
\Vert D(v_j-u)\Vert(\Om)
\le 2\Vert Du\Vert(\{0<u^{\vee}-u^{\wedge}< \delta_j\})+2^{-j-1}\eps,
\end{equation}
$\Vert v_j-u\Vert_{L^{\infty}(\Om)}\le 10\delta_j$, and
$\mathcal H(S_{v_j}\setminus \{u^{\vee}-u^{\wedge}\ge \delta_j\})=0$.
Let
	\begin{equation}\label{eq:definition of w by means of etas}
	v:=\sum_{j=2}^{\infty}(\eta_{j}-\eta_{j-1})v_{j}.
	\end{equation}
Then
	\begin{align*}
	\Vert v-u\Vert_{L^1(\Om)}
	=\Vert \sum_{j=2}^{\infty}(\eta_j-\eta_{j-1})(v_{j}-u)\Vert_{L^1(\Om)}
	\le \sum_{j=2}^{\infty} \Vert v_j-u\Vert_{L^{1}(\Om)}
	\le \sum_{j=2}^{\infty} 2^{-j-1}\eps
	=\eps/4.
	\end{align*}
	Since $\Vert v_j-u\Vert_{L^{\infty}(\Om)}\le 10\delta_j<\eps/2$,
	also $\Vert v-u\Vert_{L^{\infty}(\Om)}<\eps$.
	It is also easy to check that
	$\lim_{r\to 0^+}\Vert v-u\Vert_{L^{\infty}(\Om\setminus \Om(r))}=0$.
	
	Clearly $\sum_{j=2}^{k}(\eta_{j}-\eta_{j-1})(v_{j}-u)\to v-u$ in $L^1_{\loc}(\Om)$
	as $k\to\infty$. Thus by lower semicontinuity and a Leibniz rule
	(see \cite[Lemma 3.2]{HKLS}),
	\begin{align*}
	&\Vert D(v-u)\Vert(\Om)
	\le \liminf_{k\to\infty}\bigg\Vert D\sum_{j=2}^{k}(\eta_{j}-\eta_{j-1})(v_{j}-u)
	\bigg\Vert(\Om)\\
	&\qquad\qquad\le \sum_{j=2}^{\infty}\Vert D((\eta_{j}-\eta_{j-1})(v_{j}-u)) \Vert(\Om)\quad\textrm{by }\eqref{eq:BV functions form vector space}\\
	&\qquad\qquad\le \sum_{j=2}^{\infty}\left(\Vert D(v_{j}-u) \Vert(\Om)
	+\int_{\Om}(g_{\eta_j}+g_{\eta_{j-1}})|v_j-u|\,d\mu\right)\\
	&\qquad\qquad<\sum_{j=2}^{\infty}\left(2\Vert Du\Vert(\{0<u^{\vee}-u^{\wedge}< \delta_j\})+2^{-j-1}\eps\right)+\sum_{j=2}^{\infty}2^{-j-1}\eps\quad\textrm{by }\eqref{eq:BV distance of vj and u},\eqref{eq:vj u and test functions}\\
	&\qquad\qquad< \eps/2+\eps/4+\eps/4\quad\textrm{by }\eqref{eq:sum of delta j jump sets}\\
	&\qquad\qquad=\eps.
	\end{align*}
Finally we want to show that $\mathcal{H}(S_v)<\infty$.
Note that \eqref{eq:definition of w by means of etas}
is a locally finite sum.
If $x\in S_{(\eta_{j}-\eta_{j-1})v_{j}}$, then we get $x\in S_{v_j}$, and so
$S_v\subset \bigcup_{j=2}^{\infty}\left(S_{v_j}\cap (\Om_{j}\setminus \Om_{j-2})\right)$.
By the fact that
$\mathcal H(S_{v_j}\setminus \{u^{\vee}-u^{\wedge}\ge \delta_j\})=0$
for all $j\in\N$ and by \eqref{eq:choice of set Omega j},
we find that
\begin{align*}
	\mathcal H(S_v)&\le \sum_{j=2}^{\infty}\mathcal{H}(S_{v_j}\cap(\Omega_j\setminus\Omega_{j-2}))
	\le \sum_{j=2}^{\infty}\mathcal{H}(\{u^{\vee}-u^{\wedge}\ge \delta_j\}\cap(\Omega_j\setminus\Omega_{j-2}))
	\\&<\mathcal H(\{u^{\vee}-u^{\wedge}\ge\delta_2\})+\eps<\infty,
	\end{align*}
as desired.
\end{proof}

Now we can prove Theorem \ref{thm:BV theorem 3} of the introduction.
In Example \ref{ex:counterexample for L infinity}
we will show that here we cannot have $\mathcal H(S_v)=0$.

\begin{proof}[Proof of Theorem \ref{thm:BV theorem 3}]
First apply Proposition \ref{prop:boundary values of SBVf} to
find $\widehat{w}\in \BV(\Om)$ such that $\Vert \widehat{w}-u\Vert_{\BV(\Om)}<\eps/4$,
$\mathcal H(S_{\widehat{w}})<\infty$, and
\[
\lim_{r\to 0^+}\Vert \widehat{w}-u\Vert_{L^{\infty}(\Om\setminus \Om(r))}=0.
\]
Then apply Theorem \ref{thm:approximation of BV by SBV} to find
$w\in\SBV(\Om)$ such that
$\Vert w-\widehat{w}\Vert_{L^1(\Om)}<\eps/4$,
$\Vert Dw\Vert(\Om)<\Vert D\widehat{w}\Vert(\Om)+\eps/4$,
$\mathcal H(S_w\setminus S_{\widehat{w}})=0$,
and
\[
\lim_{r\to 0^+}\Vert w-\widehat{w}\Vert_{L^{\infty}(\Om\setminus \Om(r))}=0.
\]
In total, we have $w\in\SBV(\Om)$ such that
$\Vert w-u\Vert_{L^1(\Om)}<\eps/2$,
$\Vert Dw\Vert(\Om)<\Vert Du\Vert(\Om)+\eps/2$,
$\mathcal H(S_w)<\infty$,
and
\[
\lim_{r\to 0^+}\Vert w-u\Vert_{L^{\infty}(\Om\setminus \Om(r))}=0.
\]
Take $\Om'\Subset \Om$ such that $\Vert Dw\Vert(\Om\setminus \Om')<\eps/2$
and $\mathcal H(S_w\setminus \Om')<\eps$,
and take a function $\eta\in \Lip_c(\Om)$ with $0\le \eta \le 1$ on $X$ and
$\eta=1$ in $\Om'$.
From the definition of the total variation,
take a sequence $(w_i)\subset\liploc(\Om)$ such that
$w_i\to w$ in $L^1_{\loc}(\Om)$ and
$\lim_{i\to \infty}\Vert Dw_i\Vert(\Om)=\Vert Dw\Vert(\Om)$. Define for each $i\in\N$
\[
v_i:=\eta w_i+(1-\eta)w.
\]
Then clearly $\lim_{i\to\infty}\Vert v_i-w\Vert_{L^1(\Om)}=0$ and by a Leibniz rule
(see \cite[Lemma 3.2]{HKLS}) and since $g_\eta$ is bounded,
\begin{align*}
\Vert Dv_i\Vert(\Om)
&\le \int_{\Om}|w_i-w|g_{\eta}\,d\mu+\Vert Dw_i\Vert(\Om)
+\Vert Dw\Vert(\Om\setminus \Om')\\
&\to \Vert Dw\Vert(\Om)+\Vert Dw\Vert(\Om\setminus \Om')< \Vert Du\Vert(\Om)+\eps.
\end{align*}
Thus if we choose $v:=v_i$ for suitably large $i\in\N$, we have
$\Vert v-u\Vert_{L^1(\Om)}<\eps$ and $\Vert Dv\Vert(\Om)<\Vert Du\Vert(\Om)+\eps$,
and so in particular $v\in \BV(\Om)$.
It is then easy to check that
in fact $v\in \SBV(\Om)$.
Since $S_{w_i}=\emptyset$ for all $i\in\N$, we have
$S_{v_i}\subset S_w\setminus \Om'$
for all $i\in\N$, and since
$\mathcal H(S_w\setminus \Om')<\eps$,
in fact $\mathcal H(S_{v})<\eps$.
Finally,
\[
\lim_{r\to 0^+}\Vert v-u\Vert_{L^{\infty}(\Om\setminus \Om(r))}
=\lim_{r\to 0^+}\Vert w-u\Vert_{L^{\infty}(\Om\setminus \Om(r))}=0
\]
as required.
\end{proof}

To complete this section, we give the proof of
Corollary \ref{BV-Sobolev} by using Theorem
\ref{thm:BV theorem 1 with more detail}
(or Theorem \ref{BV thm 2 with more detail}).

\begin{proof}[Proof of Corollary \ref{BV-Sobolev}]
Assume that $\mathbb Z(\bx, \mathcal H)$ is the trace space of
$\BV(\Omega)$, i.e., the trace operator $u\mapsto Tu$ in
Definition \ref{trace} is a bounded linear surjective operator
from $\BV(\Om)$ to $\mathbb Z(\bx, \mathcal H)$.
{From the definition of the total variation \eqref{eq:total variation}
we immediately get $N^{1,1}(\Om)\subset \BV(\Om)$
with $\Vert \cdot\Vert_{\BV(\Om)}\le \Vert \cdot\Vert_{N^{1,1}(\Om)}$. Thus}
the trace operator $u\mapsto Tu$
is still a bounded linear operator from $N^{1,1}(\Om)$ to
$\mathbb Z(\bx, \mathcal H)$. Hence it remains to show the
surjectivity. For any $f\in \mathbb Z(\bx, \mathcal H)$, we
know that there is a function $u\in \BV(\Omega)$ such that $Tu=f$.
It follows from Theorem \ref{thm:BV theorem 1 with more detail}
(or Theorem \ref{BV thm 2 with more detail}) that there is a function
$v\in N^{1,1}(\Omega)$ such that $Tv=Tu=f$, since
\begin{align}
\lim_{r\rightarrow 0^+}\vint{B(x, r)\cap\Om} &|v-f(x)|\, d\mu\leq \lim_{r\rightarrow 0^+} \vint{B(x, r)\cap\Om} |u-v|+|u-f(x)|\, d\mu\notag\\
&\leq \lim_{r\rightarrow 0^+} \left(\vint{B(x, r)\cap\Om} |u-v|^{s/s-1}\, d\mu\right)^{(s-1)/s} +\lim_{r\rightarrow 0^+} \vint{B(x, r)\cap\Om} |u-f(x)|\, d\mu\notag\\
&=0,\quad \text{for} \   \text{$\mathcal H$-a.e.}  \ x\in \bx.\label{limit-BV}
\end{align}
{This gives the surjectivity as desired.

Conversely,}
assume that $\mathbb Z(\bx, \mathcal H)$ is the trace
space of $N^{1,1}(\Om)$, i.e., the trace operator $u\mapsto Tu$ in
Definition \ref{trace} is a bounded linear surjective operator
from $N^{1,1}(\Om)$ to $\mathbb Z(\bx, \mathcal H)$. Then for any
$h\in \BV(\Om)$, without loss of generality, we may assume that
$\|h\|_{\BV(\Omega)}>0$. By
Theorem \ref{thm:BV theorem 1 with more detail},
{choosing $\eps=\|h\|_{\BV(\Omega)}/2$,}
there is a function $v\in N^{1,1}(\Om)$ with
$\|v\|_{N^{1,1}(\Om)}\leq 2\|h\|_{\BV(\Om)}$ and 
\[
\vint{B(x,r)\cap \Om}|v-h|^{s/(s-1)}\,d\mu\to 0\quad \textrm{as }r\to 0^+
\]
uniformly for all $x\in\partial\Om$. Then we have that $Th=Tv$ by a
similar argument to \eqref{limit-BV}, and that
$$\|Th\|_{\mathbb Z(\bx, \mathcal H)}=\|Tv\|_{\mathbb Z(\bx, \mathcal H)}\lesssim \|v\|_{N^{1,1}(\Om)} \leq 2 \|h\|_{\BV(\Om)}.$$
Hence the trace $Th$ exists for any $h\in \BV(\Om)$ and the trace
operator $h\rightarrow Th$ is linear and bounded from $\BV(\Om)$
to $\mathbb Z(\bx, \mathcal H)$. Moreover, the surjectivity of the
trace operator follows immediately from $N^{1,1}(\Om)\subset \BV(\Om)$.
Thus $\mathbb Z(\bx, \mathcal H)$ is also the trace space of $\BV(\Om)$.
\end{proof}

\begin{remark}
The trace spaces of $\BV(\Omega)$ and $N^{1,1}(\Omega)$ are also the
same with respect to any given boundary measure $\widetilde{\mathcal H}$
under Definition \ref{trace-space-measure}. 
\end{remark}

\section{Traces of $N^{1,1}(\Om)$ and $M^{1,1}(\Om)$}

In this section, let $\Om\subset X$ be an
{arbitrary nonempty open set with nonempty complement.}

We will work with Whitney coverings of open sets.
{For a ball $B=B(x,r)$ and a number $a>0$, we use the notation $a B:=B(x,ar)$.}
We can choose a Whitney covering $\{B_j=B(x_j, r_j)\}_{j=1}^{\infty}$ of $\Omega$ such that:

\begin{enumerate}
\item for each $j\in \N$,
$$r_j=\dist (x_j, X\setminus \Omega)/100\lambda,$$

\item for each $k\in \N$, the ball
{$20\lambda B_k$ meets at most
$C_0=C_0(C_d)$}
balls $20\lambda B_j$ (that is, a bounded overlap property holds),

\item if $20\lambda B_k$ meets $20\lambda B_j$, then $r_j\leq 2r_k$;
\end{enumerate}
{see e.g. \cite[Proposition 4.1.15]{HKST} and its proof.}
Given such a covering of $\Omega$, we find a partition of unity
$\{\phi_j\}_{j=1}^{\infty}$ subordinate to the covering, that is, for each
$j\in \N$ the function $\phi_j$ is $c/r_j$-Lipschitz,
{$c=c(C_d)$, with}
$\supp(\phi_j)\subset 2 B_j$ and $0\leq \phi_j\leq 1$, such that
$\sum_j \phi_j=1$ on $\Omega$;
{see e.g. \cite[p. 103]{HKST}.}
We define a {\it discrete convolution $u_W$}
of $u\in L_{\rm loc}^1(\Omega)$ with respect to the Whitney
covering by
$$u_W:=\sum_{j=1}^{\infty} u_{B_j}\phi_j.$$
In general, $u_W\in \Lip_{\rm loc}(\Omega)\subset L_{\rm loc}^1(\Omega)$.

\begin{theorem}\label{convolution}
  For any function $u\in N^{1,1}(\Omega)$, there exists a constant
  {$0<c_H=c_H(\lambda)<1$} such that the discrete convolution $u_W$ of $u$  with respect to the Whitney covering $\{B_j=B(x_j, r_j)\}_{j=1}^{\infty}$ is in $M^{1,1}_{c_H}(\Omega)$ with the norm estimate
$$\|u_W\|_{M^{1,1}_{c_H}(\Omega)}\lesssim \|u\|_{N^{1,1}(\Omega)}.$$
\end{theorem}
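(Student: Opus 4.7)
The plan is to construct a concrete $L^1$ Haj\l asz gradient for $u_W$ by averaging the minimal $1$-weak upper gradient $g_u$ over slightly enlarged Whitney balls, and then verify the defining estimate for $M^{1,1}_{c_H}$ by exploiting the local Lipschitz behaviour of $u_W$ on Whitney-adjacent balls. First, the $L^1$ control on $u_W$ itself is immediate from $|u_{B_j}|\le \vint{B_j}|u|\,d\mu$, the inclusion $\supp\phi_j\subset 2B_j$, doubling, and the bounded overlap of $\{B_j\}$: one gets $\|u_W\|_{L^1(\Omega)}\lesssim \|u\|_{L^1(\Omega)}$.

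Next, fix a constant $\Lambda=\Lambda(\lambda)$ (for instance $\Lambda=10\lambda$), chosen large enough that $\Lambda B_j$ contains $\lambda B_k$ whenever $B_j,B_k$ are Whitney-adjacent, yet small enough that $\Lambda B_j\subset\Omega$ for every $j$ (which is automatic since $r_j=\dist(x_j,X\setminus\Omega)/100\lambda$). Define the candidate gradient
\[
g^*(x):=\sum_{j:\, x\in 2B_j}\vint{\Lambda B_j}g_u\,d\mu,\qquad x\in\Omega.
\]
At most $C_0$ terms are nonzero at any point, so $g^*$ is well defined, and the bound
\[
\int_{\Omega}g^*\,d\mu \,\le\, \sum_{j}\vint{\Lambda B_j}g_u\,d\mu\cdot \mu(2B_j)\,\lesssim\, \sum_{j}\int_{\Lambda B_j}g_u\,d\mu\,\lesssim\, \|g_u\|_{L^1(\Omega)}
\]
follows from doubling (to compare $\mu(2B_j)$ with $\mu(\Lambda B_j)$) and bounded overlap of $\{\Lambda B_j\}$, giving $\|g^*\|_{L^1(\Omega)}\lesssim \|g_u\|_{L^1(\Omega)}$.

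Now take $x,y\in\Omega$ with $d(x,y)\le c_H\min\{d(x,X\setminus\Omega),d(y,X\setminus\Omega)\}$ and pick indices $k,m$ with $x\in 2B_k$ and $y\in 2B_m$. Since $98\lambda r_k\le d(x,X\setminus\Omega)\le 102\lambda r_k$ (and similarly at $m$), for $c_H$ small enough the smallness of $d(x,y)$ forces $y\in 3B_k$, $r_m\approx r_k$, and in fact $r_j\approx r_k$ for every $j$ with $\phi_j(x)\ne \phi_j(y)$, via the Whitney property applied to $20\lambda B_j\cap 20\lambda B_k\ne\emptyset$. Using $\sum_j\phi_j\equiv 1$ on $\Omega$ one writes
\[
u_W(x)-u_W(y)=\sum_{j}(u_{B_j}-u_{B_k})(\phi_j(x)-\phi_j(y)),
\]
where at most $2C_0$ indices contribute. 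Each factor satisfies $|\phi_j(x)-\phi_j(y)|\lesssim d(x,y)/r_k$, and the $(1,1)$-Poincar\'e inequality on a ball $CB_k\supset B_j\cup B_k$ (with $C=C(\lambda)$) gives
\[
|u_{B_j}-u_{B_k}|\lesssim r_k\vint{C\lambda B_k}g_u\,d\mu\lesssim r_k\vint{\Lambda B_k}g_u\,d\mu\le r_k\,g^*(x)
\]
once $\Lambda\ge C\lambda$, since $x\in 2B_k$ makes $\vint{\Lambda B_k}g_u$ one of the summands in $g^*(x)$. Summing the at most $2C_0$ contributions yields $|u_W(x)-u_W(y)|\lesssim d(x,y)\,g^*(x)$, and by symmetry (starting the same argument from $m$) also $\lesssim d(x,y)\,g^*(y)$; replacing $g^*$ by a constant multiple of itself yields the Haj\l asz estimate $|u_W(x)-u_W(y)|\le d(x,y)(g^*(x)+g^*(y))$.

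The main obstacle is the geometric calibration of $c_H$: one must choose it small enough, depending only on $\lambda$, so that the hypothesis $d(x,y)\le c_H\dist(x,X\setminus\Omega)$ forces every Whitney ball $B_j$ contributing to either $u_W(x)$ or $u_W(y)$ to have radius comparable to $r_k$, and so that the enclosing ball $CB_k$ used in the Poincar\'e step still lies in $\Omega$. Once this uniform comparability is fixed, the remaining work is a routine quantitative tracking of the doubling, bounded overlap, and $(1,1)$-Poincar\'e constants.
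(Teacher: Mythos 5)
Your proposal is correct and follows essentially the same route as the paper's own proof: the same $L^1$ bound on $u_W$ from bounded overlap, the same candidate Haj\l asz gradient built by averaging $g_u$ over dilated Whitney balls, the same telescoping identity $u_W(x)-u_W(y)=\sum_j(u_{B_j}-u_{B_k})(\phi_j(x)-\phi_j(y))$, and the same Whitney geometry plus $(1,1)$-Poincar\'e chaining to close the estimate. The only difference is that the paper fixes $c_H=1/50\lambda$ and uses $\chi_{B_j}\vint{60\lambda B_j}g_u$ in the gradient, whereas you leave the constants $(c_H,\Lambda)$ as parameters to be calibrated; that calibration is routine, exactly as you say.
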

\begin{proof}
First we consider the $L^1$-norm of $w_W$. By the bounded overlap property
of the Whitney covering $\{B_j=B(x_j, r_j)\}_{j=1}^{\infty}$,
it follows from the facts $\supp(\phi_j)\subset 2 B_j$ and
$0\leq \phi_j\leq 1$
that
\[
\|u_W\|_{L^1(\Omega)}
\le  \sum_{j=1}^{\infty}  \mu(2B_j) \vint{B_j} |u|\, d\mu\\
\le C_d \sum_{j=1}^{\infty} \int_{B_j} |u|\, d\mu \lesssim \|u\|_{L^{1  }(\Omega)}.
\]

Next, for
{the minimal $1$-weak upper gradient}
$g_u$ of $u$, we will give an admissible function
$g$ that satisfies \eqref{H-gradient} when the pair of points
$x, y$ satisfy \eqref{H-gradient-ball} with $c_H=1/50\lambda$. We claim
that the admissible function $g$ can be defined as follows: for any point
$x\in \Omega$, we define
 \begin{equation}\label{u_W-gradient}
 g(x):=C\sum_{j=1}^{\infty}\chi_{B_j}(x) \vint{60 \lambda B_j} g_u\, d\mu
 \end{equation}
{with $C=C(C_d,C_P,\lambda)$.}
Indeed, for any pair of points $x, y\in \Omega$ satisfying \eqref{H-gradient-ball}, without loss of generality, we may assume that $\dist(x, X\setminus \Om)\leq \dist(y, X\setminus \Om)$ and $x\in B_j$, $y\in B_i$ for some $i, j\in\N$.
Recalling the properties of the Whitney covering, we have that
$$\dist(x,  X\setminus \Om)\leq \dist(x_j,  X\setminus \Om)+r_j  
= (100\lambda+1)r_j.$$
Hence we have
$$d(y, x_j)\leq d(x, y)+r_j\leq \frac{1}{50\lambda}
\dist (x, X\setminus \Omega)+r_j < 4 r_j,$$
which means $y\in 4 B_j$. Hence $20\lambda B_i\cap 20\lambda B_j\not=\emptyset$,
and so $r_i\leq 2r_j$.
 Hence $B_i\subset 10 B_j$. Moreover, if $2B_k\cap B_i\not=\emptyset$,
then $r_k\leq 2r_i$
and so $B_k\subset 6B_i\subset 20B_j$.
Recall that the function $\phi_k$ is $c/r_k$-Lipschitz for any $k\in \N$
and that $\sum_{k}\phi_k=1$ on $\Om$.
Then by  the bounded overlap property of the Whitney covering and the
Poincar\'e inequality for $u$ and $g_u$, we have that
\begin{align}
|u_W(x)-u_W(y)|
&=\left|\sum_{k=1}^{\infty} u_{B_k}\phi_k(x)-\sum_{k=1}^{\infty} u_{B_k}\phi_k(y) \right|\notag\\
&=\left|\sum_{k=1}^{\infty} (u_{B_k}-u_{B_j})\phi_k(x)-\sum_{k=1}^{\infty} (u_{B_k}-u_{B_j})\phi_k(y) \right|\notag\\
&\leq \sum_{k=1}^{\infty} |u_{B_k}-u_{B_j}||\phi_k(x)-\phi_k(y)|\notag\\
&\leq d(x, y)\sum_{\{k:\, 2B_k\cap(B_j\cup B_i) \not=\emptyset\}}  \frac{c}{r_k} |u_{B_k}-u_{B_j}|\notag\\
&\lesssim d(x, y)\frac {c}{r_j} \vint{20 B_j} |u-u_{20 B_j}|\, d\mu\label{local-poincare}\\
&\leq C d(x, y)\vint{20 \lambda B_j} g_u\, d\mu,\notag
\end{align}
where $C$ is a constant depending
on $\lambda, c, C_d, C_P$ and $C_0$ only,
and thus in fact only on $C_d,C_P,\lambda$.
Thus, the function $g$ defined in \eqref{u_W-gradient}
is an admissible function for $u_W$.

At last, we show the $L^1$-norm estimate for $g$.
It follows from the bounded overlap property of the Whitney covering that
\begin{align*}
\int_\Om g(x)\, d\mu(x)&\leq \sum_{j=1}^{\infty} \int_{B_j} g(x)\, d\mu(x)\lesssim  \sum_{j=1}^{\infty}  \mu(B_j) \vint{20 \lambda B_j} g_u\, d\mu\\
&\lesssim \sum_{j=1}^{\infty} \int_{20\lambda B_j} g_u(x)\, d\mu(x) \lesssim \int_\Om g_u(x)\, d\mu(x)= \|g_u\|_{L^{1  }(\Omega)}.
\end{align*} 
\end{proof}

Recall the homogeneous dimension $s>1$ from \eqref{eq:homogeneous dimension}.

\begin{theorem}[{\cite[Theorem 9.2]{H03}}]\label{thm:PI}
Let $\sigma>1$ and let $B=B(x,r)$ be a ball in $X$.
If $u\in M^{1, p}(\sigma B, d, \mu)$ and $g$ is an admissible function
in \eqref{H-gradient}, where $p\geq s/(s+1)$, then
\begin{equation}
\vint{B} |u-u_B|\, d\mu \leq C r\left(\vint{\sigma B} g^p\, d\mu\right)^{1/p},
\end{equation} 
with $C$ depending on $C_d$, $p$, and $\sigma$ only.
\end{theorem}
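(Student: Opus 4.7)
My plan is to split the proof into the straightforward case $p\geq 1$ and the delicate case $s/(s+1)\leq p<1$.

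For $p\geq 1$, I would apply the Haj\l asz inequality \eqref{H-gradient} to pairs of points in $B$ and average twice:
\[
\vint{B}|u-u_{B}|\,d\mu\leq \vint{B}\vint{B}|u(x)-u(y)|\,d\mu(x)\,d\mu(y)\leq 4r\vint{B}g\,d\mu,
\]
and then H\"older's inequality bounds $\vint{B}g\,d\mu$ by $(\vint{B}g^{p}\,d\mu)^{1/p}$. In this regime the enlargement factor $\sigma>1$ is not even needed, and the constant depends only on $C_d$.

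The real work is in the case $s/(s+1)\leq p<1$, where H\"older goes the wrong way. My plan is a truncation argument at dyadic levels of the Haj\l asz gradient. For $t>0$ set $E_{t}:=\{x\in\sigma B:g(x)>t\}$. Outside $E_{t}$ the Haj\l asz inequality forces $|u(x)-u(y)|\leq 4rt$ for all $x,y\in B\setminus E_{t}$ (away from the exceptional null set), so $u$ is essentially $(2t)$-Lipschitz there and contributes at most $4rt$ to the mean oscillation. On the other hand, Chebyshev's inequality controls $\mu(E_{t})\leq t^{-p}\int_{\sigma B}g^{p}\,d\mu$, while the homogeneous-dimension estimate \eqref{eq:homogeneous dimension} compares $\mu(B)$ with $\mu(\sigma B)$ via a factor of order $\sigma^{-s}$. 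Choosing $t$ proportional to $(\vint{\sigma B}g^{p}\,d\mu)^{1/p}$ balances the Lipschitz term on $B\setminus E_t$ against the mass bound on $E_t$.

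The main obstacle, and the source of the critical exponent $s/(s+1)$, is passing from the pointwise Lipschitz control off $E_{t}$ to a bound on $\vint{B}|u-u_{B}|\,d\mu$ that also absorbs the contribution of $E_t$. I would iterate the truncation at dyadic levels $t_{k}=2^{k}t_{0}$, cover each $E_{t_{k}}$ by a Vitali subfamily of balls in $\sigma B$ on which the mean of $g^{p}$ exceeds $t_{k}^{p}$, and sum a geometric series in $k$ using the doubling condition. The exponent $p=s/(s+1)$ is precisely the threshold at which this geometric sum converges, matching the Sobolev--Poincar\'e embedding $M^{1,p}\hookrightarrow L^{sp/(s-p)}$ with target exponent $1$. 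Since the statement is quoted verbatim from \cite[Theorem 9.2]{H03}, I would finish by invoking that reference rather than redoing the technical covering and summation estimates in detail.
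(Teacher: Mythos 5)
The paper itself offers no proof of Theorem~\ref{thm:PI}: it states the result verbatim with a citation to \cite[Theorem 9.2]{H03}, and you correctly recognize this and ultimately defer to that reference. Your $p\ge 1$ computation is sound: the double average over $B$, the Haj\l asz condition, and Jensen's inequality give the bound with a constant depending only on $C_d$, and the enlargement $\sigma$ is indeed superfluous in that range.

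For $s/(s+1)\le p<1$, however, the sketch has a gap at the key quantitative step. With the normalization $t_0\approx\big(\vint_{\sigma B}g^p\,d\mu\big)^{1/p}$, Chebyshev gives only $\mu(E_{t_k})\lesssim 2^{-kp}\mu(\sigma B)$, while the oscillation bound you invoke off $E_{t_k}$ grows like $r\,t_k\approx r\,2^k t_0$; the resulting series $\sum_k 2^{k(1-p)}$ diverges for \emph{every} $p<1$, not only below $s/(s+1)$. What makes the dyadic scheme close is the radius bound on the Vitali balls: from $\vint_{B_i}g^p\,d\mu>t_k^p$ one gets $\mu(B_i)\lesssim 2^{-kp}\mu(\sigma B)$, and the lower mass bound \eqref{eq:homogeneous dimension} converts this into $\rad(B_i)\lesssim r\,2^{-kp/s}$. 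Replacing the crude oscillation $r\,t_{k+1}$ by the local one $\lesssim\rad(B_i)\,t_{k+1}$ on $B_i$ changes the ratio of the geometric series to $2^{\,1-p(s+1)/s}$, which is $<1$ precisely when $p>s/(s+1)$. You attribute the convergence to doubling but never extract this radius gain, and even with it, the endpoint $p=s/(s+1)$ gives ratio $=1$, so the series still fails to converge geometrically and a further device is needed (essentially a truncation of $u$ itself and the stability of Haj\l asz gradients under truncation, which is the route in \cite{H03}). So the outline is in the right spirit, but as written it does not close at or near the critical exponent, and the step linking the Vitali radii to \eqref{eq:homogeneous dimension} must be made explicit for the argument to stand.
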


Next we will consider the relationship between $M^{1,1}_{c_H}(\Om)$ and
$M^{1,1}(\Om)$. The next theorem shows that when $\Omega\subset X$ is a
uniform domain, $M^{1,1}_{c_H}(\Omega)$ and $M^{1,1}(\Omega)$ are the same.
The case $X=\R^n$, i.e. the Euclidean case was proved in \cite[Theorem 19]{KS08}.
Before stating the theorem, we first give the definition of uniform domain.

\begin{definition}
A domain $\Omega\subset X$ is called  {\it uniform} if there is a
constant $c_U\in(0, 1]$ such that every pair of distinct points
$x, y\in\Omega$ can be connected by a curve
$\gamma\colon [0, \ell_\gamma]\to\Omega$ parametrized by arc-length
such that $\gamma(0)=x$, $\gamma(\ell_\gamma)=y$,
$\ell_\gamma\leq c_U^{-1}d(x, y)$, and 
\begin{equation}
\label{Uniform}\dist(\gamma(t), X\setminus\Omega)\geq c_U \min\{t, \ell_\gamma-t\}
\quad {\rm for\ all}\  \ t\in[0, \ell_\gamma].
\end{equation}
\end{definition}

\begin{theorem}\label{local-global}
Assume $\Omega\subset X$ is a uniform domain. Then for any $0<c_H<1$, we have $M^{1,1}_{c_H}(\Omega)=M^{1,1}(\Omega)$ with equivalent norms.
\end{theorem}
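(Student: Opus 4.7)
The inclusion $M^{1,1}(\Omega) \subseteq M^{1,1}_{c_H}(\Omega)$, together with $\|u\|_{M^{1,1}_{c_H}(\Omega)} \le \|u\|_{M^{1,1}(\Omega)}$, is immediate, since any function $g$ admissible for \eqref{H-gradient} on all pairs $x,y\in K\setminus A$ is, a fortiori, admissible on the subfamily of pairs satisfying \eqref{H-gradient-ball}. The substance of the theorem is the reverse direction, and my plan, following the Euclidean strategy of \cite{KS08}, is to manufacture from a given restricted Hajłasz gradient $g\in L^1(\Omega)$ for $u$ a full Hajłasz gradient $\tilde g$ with $\|\tilde g\|_{L^1(\Omega)}\lesssim \|g\|_{L^1(\Omega)}$.

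Fix a Whitney covering $\{B_j = B(x_j, r_j)\}_{j=1}^{\infty}$ of $\Omega$ as in Section 4 and set
$$\tilde g(x) := g(x) + C\sum_{j=1}^{\infty}\chi_{B_j}(x)\vint{KB_j} g\, d\mu,$$
for constants $C,K$ to be chosen in terms of $c_H$, $c_U$, $C_d$, $C_P$, $\lambda$. The bound $\|\tilde g\|_{L^1(\Omega)}\lesssim \|g\|_{L^1(\Omega)}$ is then immediate from the bounded overlap of the dilated Whitney balls and the doubling property, exactly as in the proof of Theorem \ref{convolution}. For pairs $x,y\in\Omega$ satisfying \eqref{H-gradient-ball}, the estimate $|u(x)-u(y)|\le d(x,y)(\tilde g(x)+\tilde g(y))$ holds directly from $g\le \tilde g$, so the content is entirely in the complementary case; without loss of generality $d(x,y) > c_H \dist(x, X\setminus \Omega)$.

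In that case, invoke the uniform property to pick $\gamma\colon [0,\ell_\gamma]\to \Omega$ from $x$ to $y$ with $\ell_\gamma\le c_U^{-1}d(x,y)$ and $\dist(\gamma(t), X\setminus\Omega)\ge c_U\min\{t, \ell_\gamma-t\}$. Build a chain $x=z_0, z_1,\ldots, z_N=y$ along $\gamma$ by taking steps of length a small multiple of $\dist(z_i, X\setminus\Omega)$, chosen so that each consecutive pair satisfies \eqref{H-gradient-ball}; the uniform condition then forces the step sizes to grow geometrically from $x$ up to scale $d(x,y)$ and to decay geometrically back to $y$, with $N\sim \log(d(x,y)/\min\{\dist(x,X\setminus\Omega),\dist(y,X\setminus\Omega)\})$. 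On each ball $D_i:=B(z_i, d(z_i, z_{i+1}))$ all pairs of points satisfy \eqref{H-gradient-ball} and $\sigma D_i\subset \Omega$ for a fixed $\sigma$, which yields a local Poincaré-type estimate
$$|u(z_i)-u(z_{i+1})|\lesssim d(z_i,z_{i+1})\vint{KD_i} g\, d\mu.$$
Each $D_i$ sits inside a Whitney ball of radius $\sim d(z_i,z_{i+1})$, so the subchain from $x$ to the midpoint telescopes via the geometric sequence of step sizes into a single term bounded by $d(x,y)\,\tilde g(x)$, and symmetrically from the midpoint to $y$ into $d(x,y)\,\tilde g(y)$. Summing produces $|u(x)-u(y)|\lesssim d(x,y)(\tilde g(x)+\tilde g(y))$, and absorbing constants gives the full Hajłasz inequality for $\tilde g$.

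The main obstacle is combining the chaining argument with the $L^1$ requirement. A naive choice such as $\tilde g(x):=\sup_{0<r\le c\dist(x,X\setminus\Omega)}\vint{B(x,r)}g\, d\mu$ would yield a Hardy–Littlewood-type maximal object which is not bounded on $L^1$ and cannot serve as a gradient in $M^{1,1}(\Omega)$. The key technical observation — essentially the reason the Euclidean proof in \cite{KS08} succeeds and the reason we chose Whitney balls above — is that the uniform structure of $\Omega$ makes the chain step at $z_i$ commensurate with the Whitney scale, so one can replace the supremum by a single Whitney-scale average, for which bounded overlap gives the $L^1$ bound for free. Carrying out this telescoping collapse rigorously — ensuring that the chain contributions near $x$ (respectively $y$) accumulate only into $\tilde g(x)$ (respectively $\tilde g(y)$) rather than into an uncontrolled maximal-type term — is where the bulk of the technical work will lie.
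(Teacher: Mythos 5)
Your strategy of chaining along the uniform curve and collapsing into a Hajłasz inequality is broadly the right shape, and the easy inclusion and the $L^1$ bound for your $\tilde g$ are fine. But there is a genuine gap exactly where you flag "the bulk of the technical work": the telescoping step does not hold for the Whitney-convolution gradient you propose.

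Concretely, after chaining you are left needing to bound
\[
\sum_i d(z_i,z_{i+1})\vint{K D_i} g\, d\mu \quad\text{by}\quad d(x,y)\bigl(\tilde g(x)+\tilde g(y)\bigr),
\]
where the chain balls $D_i$ range over all scales from $\dist(x,X\setminus\Omega)$ up to $d(x,y)$ (which, in the case $d(x,y)>c_H\dist(x,X\setminus\Omega)$, can be much larger than the Whitney scale at $x$). Your $\tilde g(x)$ sees only the average of $g$ on the single Whitney ball containing $x$. If $g$ is concentrated near the middle of the cigar, far from both endpoints, the mid-chain terms in the sum are large while your $\tilde g(x)$ and $\tilde g(y)$ are small; there is no telescoping that collapses the intermediate-scale contributions into the Whitney-scale one. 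Controlling the chain sum inescapably produces an object of restricted-maximal-function type, which is exactly the obstruction you correctly identify but then incorrectly claim to circumvent. The "commensurate with Whitney scale" observation only holds for the first and last steps of the chain, not for the intermediate ones.

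The paper's proof resolves this with a different device: it applies the Hajłasz--Poincaré inequality of Theorem \ref{thm:PI} at the \emph{subcritical} exponent $p=s/(s+1)<1$ on each chain ball (this is where Theorem \ref{convolution} has already been used, to guarantee $u\in M^{1,1}(5\lambda B_k)$), obtaining $|u_{B_k}-u_{B_{k+1}}|\lesssim r_k\bigl(\vint{5\lambda B_k} g^{s/(s+1)}\bigr)^{(s+1)/s}$. Because each chain ball contains $x$ or $y$ after a fixed dilation, these averages are dominated by $(\M g^{s/(s+1)}(x))^{(s+1)/s}$ or the same at $y$, and since $\sum_k r_k\lesssim d(x,y)$ one can set $\tilde g:=C(\M g^{s/(s+1)})^{(s+1)/s}$. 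The point of the subcritical exponent is that this $\tilde g$ is in $L^1$: $\M$ is bounded on $L^{(s+1)/s}$ with $(s+1)/s>1$, even though $\M$ itself is not bounded on $L^1$. That is the missing ingredient in your argument.
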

\begin{proof}
Choose arbitrary $x, y\in \Omega$.
By modifying the standard covering argument in uniform domains
(see \cite{HK98,HO97,KS08} for details),
from the uniformity condition we deduce 
easily that there is a chain of balls $B_k$ resembling a cigar that joins the
points $x$ and $y$.
More precisely, there are balls $B_k:= B(z_k, r_k)$
with $k\in\Z$ and $z_k\in \Omega$ such that for each $k$ one has
for some $c'=c'(\lambda,c_H,c_U)$
$$15\lambda/c_H B_k\subset \Omega\quad\text{and}\quad
r_k\geq \frac{1}{c'} \min\{d(z_k, x), d(z_k, y)\},$$
with also $B_k\cap B_{k+1}\not=\emptyset$, and $r_k/2\leq r_{k+1}\leq 2r_k$.
In addition, $\lim_{k\rarrow +\infty}d(x, B_k)=0=\lim_{k\rarrow -\infty}d(y, B_k)$.
Finally, we may assume that $\sum_{k\in \Z} r_k \leq C' d(x, y)$.

{Let $u\in M^{1,1}_{c_H}(\Omega)$ with admissible function $g\in L^1(\Om)$.
We can zero extend $g$ outside $\Om$.}
Since $15\lambda/c_H B_k \subset \Omega$ and $c_H<1$, then for any $x_0, y_0\in 5\lambda B_k$,
we have
$$d(x_0,y_0)\leq 10 \lambda r_k\leq c_H(15\lambda/c_H-5\lambda)r_k \leq c_H\min\{\dist(x_0, X\setminus\Omega), \dist(y_0, X\setminus \Om)\}.$$
Hence, for any $x_0, y_0\in 5\lambda B_k$, the condition \eqref{H-gradient-ball}
is satisfied. Thus, $u\in M^{1,1}(5\lambda B_k)$ for any $k\in \Z$. It follows
from the Poincar\'e inequality
in Theorem \ref{thm:PI} on the ball $5 B_k$ with $\sigma=\lambda$ that 
\begin{align*}
|u_{B_k}-u_{B_{k+1}}| &\lesssim \vint{5B_k} |u-u_{5B_k}| \lesssim r_k\left(\vint{5\lambda B_k} g^{s/(s+1)} \, d\mu\right)^{(s+1)/s}\\
&\lesssim r_k\left(\vint{(5\lambda+2 c')B_k} g^{s/(s+1)} \, d\mu\right)^{(s+1)/s}\\
& \lesssim {r_k}\left(\left(\mathcal M g^{s/(s+1)}(x)\right)^{(s+1)/s}+ \left(\mathcal M g^{s/(s+1)}(y)\right)^{(s+1)/s}\right),
\end{align*}
where $s$ is the associated homogeneous dimension.
 {Here the last inequality follows from the fact that either $x$ or $y$ is contained in $2c'B_k\subset (5\lambda+2 c')B_k$}.

If $x, y$ are Lebesgue points of $u$, we have
$|u(x)-u(y)|\leq \sum_{k\in\Z}|u_{B_k}-u_{B_{k+1}}|$.
By summing over $k$, it follows that
%{\color{red}
%When $d(x,y)\ge C\max(d(x),d(y)),$ we first find a uniform curve $\gamma_{xy}$ connecting $x$ and $y$. Then we $ij$
%
%Let $\gamma_{x,y}$ be a uniform curve connecting $x$ and $y$. There exists $x_0=x,x_1,\dots,x_n=y$ such that $3\lambda/C_H B(x_i,r_i)\subset\Omega.$ and $B_i\cap B_{i+1}\neq \emptyset.$ Then we have
%\begin{equation}
%|f(x)-f(y)|\le \sum_{i\in Z} |f_{B_i}-f_{B_{i+1}}|\lesssim \sum_{i\in Z}d(x_i,x_{i+1})|\tilde{g}(x_i)+\tilde{g}(x_{i+1})|,
%\end{equation}
%\footnote{I think we can get the formula in red, but I don't know how to get the next one?}
%}
$$|u(x)-u(y)|\leq d(x, y)(\tilde g(x)+\tilde g(y)),$$
{where
	$\widetilde g(x)=2C{\left(\mathcal M
		g^{s/(s+1)}(x)\right)^{(s+1)/s}}$.}
The conclusion follows from the Hardy-Littlewood maximal inequality.
\end{proof}

\begin{remark}\label{M_1}
From the proof of Theorem \ref{local-global},
we know that if $X$ is a geodesic space, i.e., for any $x, y\in X$,
there exists a curve $\gamma$ in $X$
such that $\ell_{\gamma}=d(x, y)$, then
$M^{1,1}_{c_1}(\Om)=M^{1,1}_{c_2}(\Om)$ with equivalent norms for any two
constants $0<c_1, c_2<1$. This fact coincides with the case
$\Omega\subset \R^n$, where $\R^n$ is a geodesic space.
When $\Omega\subset \R^n$,
for any $0<c_H<1$, we obtain $M^{1,1}_{c_H}(\Om)=M^{1,1}_{ball}(\Om)$. Here
we refer to \cite{KS08,Z11} for more details about the space $M^{1,1}_{ball}(\Om)$.
\end{remark}
   
 To ``achieve'' the boundary values, we need the following proposition.

 \begin{proposition}[{\cite[Proposition 6.5]{LS18}}]\label{trace-H}
Let $u\in \BV(\Omega)$.
Then the discrete convolution $u_W$ of $u$ satisfies
$$\lim_{r\rarrow 0^+} \frac{1}{\mu(B(x, r))} \int_{B(x, r)\cap \Omega} |u_W-u|\, d\mu=0$$
for $\mathcal{H}$-a.e. $x\in \partial \Omega$.
\end{proposition}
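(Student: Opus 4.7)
The plan is to control the average $\vint{B(x,r)\cap\Om}|u_W-u|\,d\mu$ by passing the partition-of-unity structure through the $\BV$ Poincar\'e inequality on the Whitney balls meeting $B(x,r)$, and then reducing the claim to a density-type vanishing result for the variation measure $\|Du\|$ at $\mathcal H$-a.e.\ boundary point.

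For the pointwise bound, since $\sum_j\phi_j\equiv 1$ on $\Om$ and $\supp\phi_j\subset 2B_j$,
\[
|u_W(y)-u(y)|\le \sum_{j:\,y\in 2B_j}|u_{B_j}-u(y)|,\qquad y\in\Om.
\]
If $x\in\partial\Om$ and $2B_j\cap B(x,r)\neq\emptyset$, picking $z$ in the intersection gives $\dist(x_j,X\setminus\Om)\le d(x_j,z)+d(z,x)<2r_j+r$, so the Whitney rule $\dist(x_j,X\setminus\Om)=100\lambda r_j$ forces $r_j\lesssim r$; in particular $2\lambda B_j\subset B(x,Cr)\cap\Om$ for a uniform $C$. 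The $(1,1)$-Poincar\'e inequality for $\BV$ (obtained by H\"older from \eqref{eq:sobolev poincare inequality}) together with a routine doubling estimate for $|u_{B_j}-u_{2B_j}|$ yields
\[
\int_{2B_j}|u_{B_j}-u|\,d\mu \lesssim r_j\,\|Du\|(2\lambda B_j).
\]
Integrating the pointwise bound, swapping sums with integrals, and summing using the bounded overlap of $\{2\lambda B_j\}$ gives
\[
\int_{B(x,r)\cap\Om}|u_W-u|\,d\mu \lesssim r\,\|Du\|(B(x,Cr)),
\]
so after dividing by $\mu(B(x,r))$ and using doubling,
\[
\frac{1}{\mu(B(x,r))}\int_{B(x,r)\cap\Om}|u_W-u|\,d\mu \lesssim \frac{r\,\|Du\|(B(x,Cr))}{\mu(B(x,Cr))}.
\]

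The main obstacle is to show the right-hand side tends to $0$ as $r\to 0^+$ for $\mathcal H$-a.e.\ $x\in\partial\Om$. Since $\|Du\|$ is a finite Radon measure on $\Om$ and every point of $\Om$ lies at positive distance from $\partial\Om$, one has $\|Du\|(\partial\Om)=0$ in the outer-measure sense of the paper: for each $\eps>0$ there is an open $V\subset X$ with $\partial\Om\subset V$ and $\|Du\|(V\cap\Om)<\eps$. For a fixed $t>0$ set
\[
E_t:=\Big\{x\in\partial\Om:\limsup_{r\to 0^+}\frac{r\,\|Du\|(B(x,Cr))}{\mu(B(x,Cr))}>t\Big\}.
\]
For each $x\in E_t$, choose arbitrarily small $r_x$ with $B(x,Cr_x)\subset V$ and $\mu(B(x,Cr_x))/r_x < \|Du\|(B(x,Cr_x))/t$. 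A Vitali-type covering lemma in the doubling space $X$ extracts essentially disjoint balls from $\{B(x_i,Cr_i)\}$ whose fixed dilations still cover $E_t$; since $\mathcal{H}(B(x_i,Cr_i))\le \mu(B(x_i,Cr_i))/(Cr_i)$ by \eqref{def-codimension}, this yields
\[
\mathcal H(E_t)\lesssim \sum_i\frac{\mu(B(x_i,Cr_i))}{r_i}\lesssim \frac{1}{t}\sum_i \|Du\|(B(x_i,Cr_i))\lesssim \frac{\|Du\|(V\cap\Om)}{t}<\frac{\eps}{t}.
\]
Sending $\eps\to 0$ gives $\mathcal H(E_t)=0$, and the countable union over $t=1/k$ shows the limsup vanishes $\mathcal H$-a.e.\ on $\partial\Om$. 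This differentiation step is the crux of the argument and is essentially the content of Proposition~6.5 of \cite{LS18}.
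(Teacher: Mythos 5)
Your proof is correct. The paper itself gives no argument for this proposition but simply cites \cite[Proposition~6.5]{LS18}, so there is no in-text proof to compare against; your reconstruction is the standard one and is, to my knowledge, essentially what the cited reference does. The two key steps you supply---the Whitney--Poincar\'e estimate $\int_{B(x,r)\cap\Om}|u_W-u|\,d\mu\lesssim r\,\|Du\|(B(x,Cr))$ via bounded overlap, and the Vitali-type covering argument showing that the codimension-$1$ density $r\,\|Du\|(B(x,Cr))/\mu(B(x,Cr))$ vanishes $\mathcal H$-a.e.\ on $\partial\Om$ because $\|Du\|(V\cap\Om)$ can be made arbitrarily small by continuity from above for finite measures---are exactly the right ingredients, and the Poincar\'e step should be read with the dilation $4\lambda B_j$ rather than $2\lambda B_j$ (harmless, since the Whitney covering's bounded overlap holds for $20\lambda B_j$).
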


The above proposition considers the measure $\mathcal H$ on $\partial \Om$, that is,
the codimension $1$ Hausdorff measure.  But this may not be the measure we
really want to study. For example, a classical problem is to study the trace
spaces of weighted Sobolev spaces on Euclidean spaces. For the half plane
$\Omega=\R^{2}_+:=\{x=(x_1, x_2)\in \R^2:\, x_2>0 \}$ and the measure
$d\mu(x)=w_\lambda(x)\, dm_2(x)$ with $m_2$ the $2$-dimensional Lebesgue
measure and $w_\lambda(x):=\log^\lambda\left(\max\{e, e/|x_2|\}\right)$,
$\lambda>0$,  the codimension $1$ Hausdorff measure on $\partial \R^2_+=\R$
is not even $\sigma$-finite and hence is not the $1$-dimensional Lebesgue measure
that we usually study, see Example \ref{ex:weight}. Thus, it is reasonable
to consider the equivalence of the traces of $N^{1,1}(\Om)$ and $M^{1,1}(\Omega)$
under any general boundary measure $\widetilde {\mathcal H}$ on $\partial \Omega$.
Thus, we introduce the following lemma.

\begin{lemma}\label{trace-h}
Assume $\Omega$ satisfies a measure doubling condition \eqref{measure-doubling},
i.e., $\mu_{|\Omega}$ is doubling.
Let $u\in L_{\rm loc}^1(\Omega)$ and $z\in \partial \Om$.
Assume that there is $a\in \R$ such that 
$$\lim_{r\rarrow 0^+}\vint{B(z, r)\cap \Omega} |u-a|\, d\mu=0.$$
Then the discrete convolution $u_W$ of $u$ satisfies
$$\lim_{r\rarrow 0^+}\vint{B(z, r)\cap \Omega} |u_W-a|\, d\mu=0.$$
\end{lemma}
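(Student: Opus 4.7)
The plan is to estimate $|u_W(x) - a|$ pointwise using the partition of unity, then swap the sum and integral and exploit the bounded overlap of the Whitney cover and the measure doubling condition on $\Omega$. The key geometric observation is that Whitney balls $B_j$ whose double $2B_j$ meets $B(z,r)$ must have radius comparable to (or smaller than) $r$ when $z \in \partial\Omega$. Indeed, if $x \in 2B_j \cap B(z,r) \cap \Omega$, then on one hand $\dist(x, X \setminus \Omega) \le d(x,z) < r$, while on the other hand $\dist(x, X \setminus \Omega) \ge \dist(x_j, X \setminus \Omega) - d(x,x_j) \ge 100\lambda r_j - 2r_j$, so $r_j \le r/(100\lambda - 2)$. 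In particular, $B_j \subset B(z, Kr) \cap \Omega$ for a constant $K = K(\lambda)$.

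Using $\sum_j \phi_j \equiv 1$ on $\Omega$, write $u_W - a = \sum_j (u_{B_j} - a)\phi_j$ so that for every $x \in \Omega$,
\[
|u_W(x) - a| \le \sum_{j} |u_{B_j} - a|\, \phi_j(x) \le \sum_{j \in J(r)} |u_{B_j} - a|\, \chi_{2B_j}(x),
\]
where $J(r) := \{j : 2B_j \cap B(z,r) \cap \Omega \ne \emptyset\}$. Integrating over $B(z,r) \cap \Omega$ and swapping sum and integral,
\[
\int_{B(z,r) \cap \Omega} |u_W - a|\, d\mu \le \sum_{j \in J(r)} |u_{B_j} - a|\, \mu(2B_j).
\]
For each $j \in J(r)$, apply $|u_{B_j} - a| \le \vint{B_j} |u - a|\, d\mu$ and the doubling property $\mu(2B_j) \le C_d \mu(B_j)$ to obtain $|u_{B_j} - a|\mu(2B_j) \le C_d \int_{B_j} |u-a|\, d\mu$. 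The bounded overlap of the family $\{B_j\}_{j \in J(r)}$ (inherited from the overlap bound on $\{20\lambda B_j\}$) together with the inclusion $B_j \subset B(z, Kr) \cap \Omega$ for $j \in J(r)$ then yields
\[
\sum_{j \in J(r)} \int_{B_j} |u - a|\, d\mu \le C_0 \int_{B(z, Kr) \cap \Omega} |u - a|\, d\mu.
\]

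Dividing by $\mu(B(z,r) \cap \Omega)$, the measure doubling condition \eqref{measure-doubling} for $\mu|_\Omega$, iterated a number of times depending only on $K$ and $c_d$, gives $\mu(B(z, Kr) \cap \Omega) \lesssim \mu(B(z,r) \cap \Omega)$. Combining,
\[
\vint{B(z,r) \cap \Omega} |u_W - a|\, d\mu \lesssim \vint{B(z, Kr) \cap \Omega} |u - a|\, d\mu,
\]
and the right-hand side tends to $0$ as $r \to 0^+$ by hypothesis, which proves the lemma.

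The only real obstacle is bookkeeping: checking that the Whitney radius bound $r_j \lesssim r$ is uniform in $j \in J(r)$, that the containment $B_j \subset B(z,Kr) \cap \Omega$ holds with $K$ depending only on $\lambda$, and that the measure doubling condition on $\Omega$ can indeed be iterated to swallow the constant $K$. All of these are routine once the geometric picture is drawn; there is no subtle analytic input required.
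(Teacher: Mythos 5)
Your proof is correct and follows essentially the same approach as the paper: pointwise bound $|u_W-a| \le \sum_j |u_{B_j}-a|\chi_{2B_j}$, observe that Whitney balls meeting $B(z,r)$ have radius $\lesssim r$ (hence are contained in $B(z,Kr)$), then integrate using bounded overlap and the measure doubling condition on $\mu|_\Omega$. The paper computes the explicit constant $K=2$ by bounding $d(x_k,z)$ directly via $\dist(x_k,X\setminus\Omega)=100\lambda r_k$, while you deduce the same thing by estimating $\dist(x,X\setminus\Omega)$ for $x\in 2B_j\cap B(z,r)$; this is an inessential bookkeeping difference.
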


\begin{proof}
In the Whitney covering $\{B_k\}_{k=1}^{\infty}$,
recall that for any $B_k=B(x_k, r_k)$ we have $r_k=\dist(x_k, X\setminus\Omega)/100\lambda$. If $2B_k\cap B(z, r)\not=\emptyset$, then
$$2r_k+ r\geq d(x_k, z)\geq \dist(x_k, X\setminus \Omega)=100\lambda r_k,$$
which implies 
$$\bigcup_{\{k\in \N:\, 2B_k\cap B(z, r)\not=\emptyset\}} B_k\subset B(z, 2r).$$
Then we have 
\begin{align*}
\int_{B(z, r)\cap \Omega} |u_W-a|\, d\mu
&= \int_{B(z, r)\cap \Omega} \left|\sum_{k=1}^{\infty}(\phi_k u_{B_k}-\phi_k a)\right|\, d\mu\\
&\le \int_{B(z, r)\cap \Omega} \sum_{k=1}^{\infty}|\phi_k| |u_{B_k}- a|\, d\mu\\
&\le \int_{B(z, r)\cap \Omega} \sum_{k=1}^{\infty}\chi_{2B_k} |u_{B_k}- a|\, d\mu\\
&\le \int_{B(z, r)\cap \Omega} \sum_{k=1}^{\infty}\chi_{2B_k} \vint{B_k}|u-a|\,d\mu\, d\mu\\
&\le C_d\sum_{\{k\in \N:\, 2B_k\cap B(z, r)\not=\emptyset\}} \int_{B_k}|u-a|\,d\mu\\
&\lesssim   \int_{B(z,2r)\cap \Om}|u-a|\,d\mu
\end{align*}
by the bounded overlap property.
Thus, the doubling property of $\mu_{|\Omega}$ gives the estimate
\begin{align*}
\vint{B(z, r)\cap \Omega} |u_W-a|\, d\mu
&\lesssim \vint{B(z, 2r)\cap \Omega} |u-a|\, d\mu.
\end{align*}
The result follows by passing to the limit.
\end{proof}

\begin{proof}[Proof of Theorem \ref{Sobolev-H}, Theorem \ref{Sobolev-H-measure}, and Corollary \ref{SObolev-M-2}] 
Theorem \ref{Sobolev-H} follows immediately by combining Theorem \ref{convolution}, Theorem \ref{local-global} and Proposition \ref{trace-H}, while Theorem \ref{Sobolev-H-measure} follows immediately by combining Theorem \ref{convolution}, Theorem \ref{local-global} and Lemma \ref{trace-h}.

For Corollary \ref{SObolev-M-2}, by adapting the proof of Corollary \ref{BV-Sobolev}, we obtain the result using Theorem \ref{Sobolev-H-measure}.
Note that $M^{1,1}(\Om)\subset N^{1,1}(\Om)$ with
$\Vert \cdot\Vert_{N^{1,1}(\Om)}\lesssim \Vert \cdot\Vert_{M^{1,1}(\Om)}$,
see \cite[Theorem 8.6]{H03}.
\end{proof}

\section{Examples and applications}\label{application-example}

The following example shows that in Theorem \ref{thm:BV theorem 2} we cannot
take a function $v\in \liploc(\Om)$, or even $v\in L^{\infty}_{\loc}(\Om)$.

\begin{example}\label{ex:counterexample for L q}
Let $X=\R^2$ (unweighted) and let $\Om:=B(0,1)$.
We find  a sequence $\{x_k\}$ that is dense in $ B(0,1)$.
Take
\[
u_k(x):=|x-x_k|^{-1+1/k},\quad k\in\N.
\]
Then $\Vert u_k\Vert_{L^1(\Om)}<\infty$
and the minimal $1$-weak upper gradient satisfies
(see \cite[Proposition A.3]{BB})
\[
g_{u_k}(x)=|\nabla u_k(x)|=(-1+1/k)|x-x_k|^{-2+1/k}
\]
and so
$$\int_{B(0,1)}g_{u_k}\,dx\lesssim\int_{B(0,1)}|x-x_k|^{-2+1/k}\,dx
\le \int_{B(0,2)}|x|^{-2+1/k}\,dx<\infty.$$
Let $$u(x):=\sum_{k}2^{-k}\frac{u_k}{\Vert u_k\Vert_{N^{1,1}(B(0,1))}}.$$
 Then using e.g. \cite[Lemma 1.52]{BB} we see that $u$
has a $1$-weak upper gradient
$$\sum_k 2^{-k}\frac{g_{u_k}}{\Vert u_k\Vert_{N^{1,1}(B(0,1))}},$$
which implies $u\in N^{1,1}(B(0,1))$. We know that the homogeneous
dimension $s$ of $\R^2$ is 2, and then $\frac{s}{s-1}=2$.
On the other hand, we can see that for any $q>2$,
we have for all sufficiently large $k\in\N$
$$\int_{B(x_k,r)\cap B(0,1)}|u_k|^q\,dx=\infty\quad\textrm{for all }r>0,$$
and then for all balls $B\cap B(0,1)\neq \emptyset$  we have
$\int_{B\cap B(0,1)}|u|^q\,dx=\infty$. Given
$v\in \liploc(B(0,1))$, we know that
$v\in L^q_{\loc}(B(0,1))$.
Therefore we have $\Vert v-u\Vert_{L^q(B\cap B(0,1))}=\infty$ for all balls
$B\cap B(0,1)\neq \emptyset$, which contradicts the desired
conclusion in Theorem \ref{thm:BV theorem 2}.
\end{example}

The following example shows that in Theorem \ref{thm:BV theorem 3} we cannot
take a function $v$ with $\mathcal H(S_v)=0$.

\begin{example}\label{ex:counterexample for L infinity}
Let $X=\R^2$ (unweighted) and let $\Om:=(-1,1)\times (0,1)$.
Define $u\in\BV(\Om)$ by
\[
u(x_1,x_2):=
\begin{cases}
0 & \textrm{when }x_1<0\\
1 & \textrm{when }x_1\ge 0.
\end{cases}
\]
Let $v\in\BV(\Om)$ with $\mathcal H(S_v)=0$. Since
$\mathcal H(\{0\}\times (0,1))>0$, it is now easy to check that
$\Vert v-u\Vert_{L^{\infty}(\Om\setminus \Om(r))}\ge 1/2$ for all $r>0$.
\end{example}

A direct consequence of Corollary \ref{BV-Sobolev} and
Corollary \ref{SObolev-M-2} is that under a proper setting, the trace
spaces of the BV space, Newton-Sobolev space, and Haj\l asz-Sobolev
space are the same. Hence we can obtain many trace results for the
BV and Haj\l asz-Sobolev space directly from trace results for the
Newton-Sobolev space obtained in the literature.
{In particular, from \cite[Theorem 1.1]{Ma} we are able to obtain
the following result.}

\begin{theorem}\label{application-1}
Let $\Omega\subset X$ be a
{bounded uniform domain
satisfying the measure doubling condition \eqref{measure-doubling}.
Assume also that $(\Om,d,\mu|_{\Omega})$}
admits a $(1,1)$-Poincar\'e inequality. Let $\bx$
be endowed with an Ahlfors codimension $\theta$-regular measure $\nu$ for
some $0<\theta<1$. Then the trace spaces of $N^{1, 1}(\Omega, \mu)$,
$\BV(\Omega, \mu)$ and $M^{1,1}(\Omega, \mu)$ are the same, namely the Besov
space $B^{1-\theta}_{1,1}(\bx, \nu)$.
\end{theorem}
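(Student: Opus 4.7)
The plan is to first identify the trace space of one of the three function classes using the cited result of Mal\'y \cite[Theorem 1.1]{Ma}, and then transfer this identification to the remaining two classes by invoking Corollary \ref{BV-Sobolev} and Corollary \ref{SObolev-M-2}. The assumptions of the theorem have been tailored precisely so that each ingredient applies, so the work consists in checking that the hypotheses match and then chaining the results.

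First, I would apply \cite[Theorem 1.1]{Ma} under our standing hypotheses: $\Omega$ is a bounded uniform domain, $(\Omega,d,\mu|_\Omega)$ is doubling and supports a $(1,1)$-Poincar\'e inequality, and $\partial\Omega$ carries an Ahlfors codimension $\theta$-regular measure $\nu$ with $0<\theta<1$. This yields that the trace space of $N^{1,1}(\Omega,\mu)$ with respect to $\nu$, in the sense of Definition \ref{trace-space-measure}, is exactly the Besov space $B^{1-\theta}_{1,1}(\partial\Omega,\nu)$; that is, the trace operator $u\mapsto Tu$ given by \eqref{def-trace} is a bounded linear surjection from $N^{1,1}(\Omega,\mu)$ onto $B^{1-\theta}_{1,1}(\partial\Omega,\nu)$.

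Next, I would transfer the identification to $\BV(\Omega,\mu)$. By the remark following Corollary \ref{BV-Sobolev} (whose proof is verbatim that of Corollary \ref{BV-Sobolev} after replacing $\mathcal H$ by $\widetilde{\mathcal H}$, since Theorems \ref{thm:BV theorem 1 with more detail} and \ref{BV thm 2 with more detail} give the convergence \emph{uniformly} on $\partial\Omega$ and hence for every boundary measure), the trace spaces of $\BV(\Omega,\mu)$ and $N^{1,1}(\Omega,\mu)$ coincide with respect to any boundary measure. Applied with $\widetilde{\mathcal H}=\nu$, this gives $B^{1-\theta}_{1,1}(\partial\Omega,\nu)$ as the trace space of $\BV(\Omega,\mu)$. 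For $M^{1,1}(\Omega,\mu)$, I would apply Corollary \ref{SObolev-M-2} directly: its hypotheses (bounded uniform domain, measure doubling condition on $\Omega$) are part of our assumptions, and the Ahlfors codimension $\theta$-regular measure $\nu$ is a Radon measure on the compact set $\overline{\Omega}\cap\partial\Omega$, so the corollary identifies the trace space of $M^{1,1}(\Omega,\mu)$ with respect to $\nu$ as the trace space of $N^{1,1}(\Omega,\mu)$ with respect to $\nu$, which we have just seen equals $B^{1-\theta}_{1,1}(\partial\Omega,\nu)$.

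The only genuine verification is that each cited result truly applies in the stated generality. The main subtlety is that Mal\'y's theorem is phrased for $(\Omega,d,\mu|_\Omega)$ as a PI space in its own right, which is exactly what we have assumed; and Corollary \ref{SObolev-M-2} requires only a Radon boundary measure, which Ahlfors codimension $\theta$-regularity provides automatically. There is no substantive obstacle beyond this bookkeeping: once the hypotheses are matched, the three statements chain together to give equality of the three trace spaces with the common value $B^{1-\theta}_{1,1}(\partial\Omega,\nu)$.
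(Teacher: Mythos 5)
Your proposal is correct and follows exactly the route the paper intends: identify the trace space of $N^{1,1}(\Omega,\mu)$ via Mal\'y's theorem, then transfer it to $\BV(\Omega,\mu)$ by the remark following Corollary \ref{BV-Sobolev} (valid for arbitrary boundary measures because the approximation in Theorem \ref{thm:BV theorem 1 with more detail} is uniform on $\partial\Omega$) and to $M^{1,1}(\Omega,\mu)$ by Corollary \ref{SObolev-M-2}. The paper states this only as a one-sentence consequence of the cited results, so your write-up is in effect a fully spelled-out version of the same argument, including the small verification that $\nu$ is Radon (it is finite on the compact set $\partial\Omega$ by the codimension regularity applied at scale $\diam\Omega$).
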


We say that $\bx$ is endowed with an Ahlfors codimension $\theta$-regular
measure $\nu$ if there is a $\sigma$-finite Borel measure $\nu$ on $\bx$ and
a constant $c_\theta>0$ such that
\begin{equation}\label{codimension}
{c_\theta}^{-1} \frac{\mu(B(x, r)\cap \Omega)}{r^\theta}\leq \nu(B(x, r)\cap \bx) \leq c_\theta \frac{\mu(B(x, r)\cap \Omega)}{r^\theta}
\end{equation}
for all $x\in \bx$ and $0<r<2\diam \Omega$.
{The Besov space $B^{1-\theta}_{1,1}(\bx, \nu)$}
consists of $L^p$-functions
of finite Besov norm that is given by
\[
\|u\|_{B^{1-\theta}_{1,1}(\bx, \nu)} =\|u\|_{L^p(\bx, \nu)}+\int_0^\infty\int_\bx\vint{B(y, t)}\frac{|u(x)-u(y)|}{t^{1-\theta}}\, d\nu(x)\, d\nu(y)\frac{dt}{t}.
\]

The above theorem seems to be new even for $\BV$ and $M^{1,1}$ functions
in the (weighted) Euclidean setting.
{As an illustration, we give an example in weighted Euclidean spaces.}

\begin{example}
Let $\Omega=\mathbb D \subset \R^2$ be the unit disk with $\bx=\mathbb S^1$ the
unit circle. Take the measure $d\mu (x)=\dist(x, \mathbb S^1)^{-\alpha}\,dm_2(x)$
with $0<\alpha<1$ and $m_2$ two-dimensional Lebesgue measure. Then by a
direct computation, $\dist(x, \mathbb S^1)^{-\alpha}$ with $0<\alpha<1$
is an $A_1$-weight and
hence $\mu$ supports a $(1,1)$-Poincar\'e inequality,
see \cite[Chapter 15]{HKM}.
Moreover, it is easy to check that the 1-dimensional Hausdorff measure
$\mathcal H^1$ on $\mathbb S^1$ is an Ahlfors codimension
$(1-\alpha)$-regular
measure, i.e., $\mathcal H^1$ on $\mathbb S^1$ satisfies \eqref{codimension}
with $\theta=1-\alpha$. Hence we obtain from
Theorem \ref{application-1} that the trace spaces of $N^{1,1}(\mathbb D, \mu)$,
$\BV(\mathbb D, \mu)$,
and $M^{1,1}(\mathbb D, \mu)$ are $B^{\alpha}_{1,1}(\mathbb S^1, \mathcal H^1)$.
It is also known
from the classical trace results of weighted
	Sobolev spaces that the trace space of $N^{1,1}(\mathbb D, \mu)$ is the classical
	Besov space $B^{\alpha}_{1,1}(\mathbb S^1, \mathcal H^1)$.
Here we refer to \cite{MirRus,Tyu1,Tyu2} for the trace results for weighted Sobolev
spaces on Euclidean spaces and refer to the seminal monographs by Triebel \cite{T}
for more information on Besov spaces. 
\end{example} 

{On the other hand, using our theory it is also possible to
obtain new trace results for Haj\l asz-Sobolev or Newton-Sobolev functions
from the known trace results for BV functions. In particular,
from \cite[Corollary 1.4]{MShS}
we are able to obtain the following trace results.}

\begin{theorem}\label{application-2}
Let $\Omega\subset X$ be a bounded uniform domain that satisfies the measure density
{condition \eqref{measure-density}
and admits a $(1,1)$-Poincar\'e inequality.
Assume also that the codimension
$1$ Hausdorff measure $\mathcal H$ is Ahlfors
codimension $1$-regular.}
Then we have that the trace spaces of
$\BV(\Omega,\mu)$, $N^{1,1}(\Omega,\mu)$ and $M^{1,1}(\Omega,\mu)$ are the same,
namely the space $L^1(\bx,\mathcal H)$.
\end{theorem}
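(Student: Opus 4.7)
The plan is to first establish the trace space for $\BV(\Om,\mu)$ via an external input, and then ``transport'' this identification to $N^{1,1}(\Om,\mu)$ and $M^{1,1}(\Om,\mu)$ using Corollary \ref{BV-Sobolev} and Corollary \ref{SObolev-M-2}.

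First, I would verify that the hypotheses match those of \cite[Corollary 1.4]{MShS}: $\Om$ is a bounded uniform domain, satisfies the measure density condition \eqref{measure-density}, admits a $(1,1)$-Poincar\'e inequality, and $\mathcal H$ is Ahlfors codimension $1$-regular on $\bx$. Under these assumptions, that result identifies $L^1(\bx,\mathcal H)$ as the trace space of $\BV(\Om,\mu)$ in the sense of Definition \ref{trace-space}; that is, the trace operator $T$ of Definition \ref{trace} is a bounded linear surjection from $\BV(\Om,\mu)$ onto $L^1(\bx,\mathcal H)$.

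Next, I would invoke Corollary \ref{BV-Sobolev} directly: since the trace spaces of $\BV(\Om,\mu)$ and $N^{1,1}(\Om,\mu)$ coincide, we obtain that $L^1(\bx,\mathcal H)$ is also the trace space of $N^{1,1}(\Om,\mu)$. No further verification is needed here since Corollary \ref{BV-Sobolev} is stated in the standing generality of Section 3.

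Finally, to treat $M^{1,1}(\Om,\mu)$, I would check that the hypotheses of Corollary \ref{SObolev-M-2} are satisfied: $\Om$ is a uniform domain by assumption, and the measure density condition \eqref{measure-density} implies the measure doubling condition \eqref{measure-doubling}, as noted just after the definition of the latter in Section \ref{sec:preliminaries}. Taking $\widetilde{\mathcal H}=\mathcal H$ in Corollary \ref{SObolev-M-2}, we conclude that the trace spaces of $N^{1,1}(\Om,\mu)$ and $M^{1,1}(\Om,\mu)$ with respect to $\mathcal H$ agree, and therefore equal $L^1(\bx,\mathcal H)$ as well.

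The only genuine obstacle is bookkeeping: confirming that the cited result \cite[Corollary 1.4]{MShS} applies in the present metric setting with the exact hypotheses listed, and that Corollary \ref{SObolev-M-2} indeed permits the choice $\widetilde{\mathcal H}=\mathcal H$. Both are transparent from the statements, so once the BV trace identification is in hand, the rest of the argument is an immediate assembly of Corollary \ref{BV-Sobolev} and Corollary \ref{SObolev-M-2}, with no additional estimates needed.
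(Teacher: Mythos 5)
Your proposal is correct and matches exactly the route the paper indicates (it gives no explicit proof, but the surrounding text makes clear the argument is to take the BV trace identification from \cite[Corollary 1.4]{MShS} and then transport it to $N^{1,1}$ and $M^{1,1}$ via Corollary \ref{BV-Sobolev} and Corollary \ref{SObolev-M-2}). One small remark: since the measure density condition \eqref{measure-density} is assumed, you could equally well invoke Theorem \ref{Sobolev-H} directly for the $M^{1,1}$ step instead of passing through Corollary \ref{SObolev-M-2} with $\widetilde{\mathcal H}=\mathcal H$; both routes give the same conclusion.
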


When $\Omega=\mathbb D$, $\bx=\mathbb S^1$, $\mu=m_2$ the
$2$-dimension Lebesgue measure and
{$\mathcal H\approx \mathcal H^1$ the}
$1$-dimension Hausdorff measure,
the above theorem coincides with the classical results that the trace spaces
of $\BV(\mathbb D)$ and $N^{1,1}(\mathbb D)$ are both $L^1(\mathbb S^1)$. Moreover,
the above theorem gives that $L^1(\mathbb S^1)$ is also the trace space
of $M^{1,1}(\mathbb D)$, which seems to be new even in this case.

The above Theorem \ref{application-1} and Theorem \ref{application-2} both require
that the boundaries are endowed with some codimension Ahlfors regular measure.
In the following, we will give an example where the measure on the boundary
do not satisfy any codimension Ahlfors regularity.

\begin{example} \label{ex:weight}
Let $\Omega=\R^{2}_+:=\{x=(x_1, x_2)\in \R^2:\, x_2>0 \}$ and take
the measure $d\mu(x)=w_\lambda(x)\, dm_2(x)$ with $m_2$ the
$2$-dimensional Lebesgue measure and
$w_\lambda(x)= \log^\lambda\left(\max\{e, e/|x_2|\}\right)$, $\lambda>0$. 
For any $x\in \R=\partial \Omega$ and $0<r<e^{-2\lambda}$, let $Q(x, r)$
denote the cube parallel to the coordinate axes with center $x$ and
sidelength $r$. Then we have the estimate
\begin{equation}\label{A_p}
\mu(Q(x, r))= 
 2\int_{0}^{r}\int_{0}^{r/2} \log^\lambda(e/|x_2|)\, dx_2\, dx_1
= 2 r \int_{0}^{r/2} \log^\lambda(e/t)\, dt \approx r^2\log^\lambda(e/r). 
\end{equation}
Here the last equality holds since we have
\[
\left(t\log^\lambda(e/t)\right)^{'}=\log^\lambda(e/t)\left(1-\frac{\lambda}{\log(e/t)}\right)\approx \log^\lambda(e/t), \ \ \text{for} \ \  0<t\leq r<e^{-2\lambda}.
\]

By using the estimate \eqref{A_p}, it follows from the definition of
the codimension $1$ Hausdorff measure \eqref{def-codimension-1} that
for any nonempty interval $[a, b]$ in $\mathbb R=\partial \R^{2}_+$,
we have that
\[\mathcal H([a, b]) = \lim_{R\rightarrow 0^+} \mathcal H_R([a, b])\approx \lim_{R\rightarrow 0^+} |a-b| \log^{\lambda}(e/R)=\infty.
\]
Hence the codimension $1$ Hausdorff measure $\mathcal H$ on $\R$ is not
even $\sigma$-finite and is not the $1$-dimensional Lebesgue measure
that we usually study.

Moreover, the weight $w_\lambda$ defined above is a Muckenhoupt $A_1$-weight, since it is easy to check from estimate \eqref{A_p} that
$$\frac{\mu(B(z, r))}{r^2}\lesssim \inf_{x\in B(z, r)} w_\lambda(x),\ \ \text{for any} \ z\in \R^2_+ \ \text{and} \ r>0. $$
We refer to \cite{B01} and \cite[Chapter 15]{HKM} for definitions, properties and examples of Muckenhoupt class weights.
\end{example}

\begin{example}\label{dyadic-trace}
Let $\Omega, \mu$ be as in the above example. Then it is easy to check from estimate \eqref{A_p} that the $1$-dimensional Lebesgue measure on $\mathbb R$ does
not satisfy the condition \eqref{codimension} for any $\theta$. 
We denote by $\dyadic$ the collection of dyadic semi-open intervals in $\R$,
i.e.~the intervals of the form $I := 2^{-k}\big((0,1] + m\big)$, where
$k \in \mathbb N$ and $m \in \mathbb Z$. Write $\ell(I)$ for the edge length
of $I \in \dyadic$, i.e.~$2^{-k}$ in the preceding representation, and
$\dyadic_{k}$ for the cubes $Q \in \dyadic$ such that $\ell(Q) = 2^{-k}$.
For any $I\in\dyadic_{2^{j}}$, denote by $\widetilde I$ the interval in
$\dyadic_{2^{j-1}}$ containing the interval $I$.
By applying the methods used in \cite{Tyu2} and \cite[Theorem 1.3]{KW19}, we are
able to use the dyadic norm similar with the ones used in \cite{KSW17} and
\cite{KW19} to characterize the trace space of $N^{1,1}(\mathbb R^2_+, \mu)$,
which is the Besov-type space $\mathcal B^{\lambda}_{1}(\R)$. The Besov-type
space $\mathcal B^{\lambda}_{1}(\R)$ consists of functions in $L^1(\R)$ of
finite dyadic norm that is given by
\[\|u\|_{\mathcal B^{\lambda}_{1}(\mathbb R)}=\|u\|_{L^1(\mathbb R)}+\sum_{j=1}^{+\infty}2^{-\lambda j}\sum_{I\in\dyadic_{2^j}} 2^{-2^{j}} |u_{I}-u_{\widetilde I}|.
\] 
We omit the detailed proof here. Since $\R^2_+$ is uniform domain and satisfies
the measure doubling condition \eqref{measure-doubling}, hence we obtain that the
trace spaces of $\BV(\R^2_+,\mu)$, $N^{1,1}(\R^2_+,\mu)$ and $M^{1,1}(\R^2_+, \mu)$
are the same, the Besov-type space $\mathcal B^{\lambda}_{1}(\R)$.
\end{example}

\begin{example}
The recent papers \cite{BBGS,KW19,W} studied trace results on regular trees.
We refer to \cite[Section 2]{BBGS} or \cite[Section 2.1]{KW19} for the definition
of regular trees. It is easy to check that a regular tree is  uniform and that
it supports $(1,1)$-Poincar\'e inequality by modifying the proof in
\cite[Theorem 4.2]{BBGS} under the setting in \cite{BBGS,KW19}. Hence the trace
results  of $N^{1,1}$ in \cite{BBGS,KW19}  can be immediately applied to $\BV$
and $M^{1,1}$. We omit the detail here and leave it to the interested reader. 
\end{example}

\noindent Addresses:\\

\noindent P.L.: Institut f\"ur Mathematik\\
Universit\"at Augsburg\\
Universit\"atsstr. 14\\
86159 Augsburg, Germany\\
E-mail: {\tt panu.lahti@math.uni-augsburg.de}

\bigskip

\noindent X.L.: School of Mathematics(Zhuhai)\\
Sun Yat-Sen University\\
519082, Zhuhai, China\\
Email:{\tt lixining3@mail.sysu.edu.cn}

\bigskip

\noindent Z.W.:
Department of Mathematics and Statistics\\
University of Jyv\"askyl\"a\\
PO~Box~35, FI-40014 Jyv\"askyl\"a, Finland.\\
Email: {\tt zhuang.z.wang@jyu.fi}

\end{document}